\begin{document}
%\maketitle

\begin{center}
\vspace{0.6cm}

{ \bf \LARGE    Gutierrez-Sotomayor Flows on Singular Surfaces
}

\vspace{0.6cm}
\large 
M.A.J. Zigart \footnote{Partially  financed  by the Coordenação de Aperfeiçoamento de Pessoal de Ní­vel Superior - Brasil (CAPES) - Finance Code 001}
\hspace{0.3cm}
K.A. de Rezende\footnote{Partially supported by CNPq under grant 305649/2018-3 and FAPESP under grant 2018/13481-0.}
\hspace{0.3cm}
N.G. Grulha Jr. \footnote{Supported by CNPq under grant 303046/2016-3; and  FAPESP under grant 2017/09620-2.}
\hspace{0.3cm}
D.V.S. Lima%\footnote{}  
\hspace{0.3cm}

\end{center}

\vspace{1cm}

\begin{abstract}

In this work  we address the realizability of a Lyapunov graph labeled with GS singularities, namely regular, cone, Whitney, double crossing and triple crossing singularities,  as continuous flow on a singular closed  $2$-manifold $\mathbf{M}$.  Furthermore, the Euler characteristic is computed with respect to the types of GS singularities of the flow on $\mathbf{M}$.
Locally, a complete classification theorem for  minimal isolating blocks of GS singularities is presented in terms of the branched one manifolds that make up the boundary.

\end{abstract}

\vspace{0.5cm}

\textbf{MSC 2020:}  14J17, 37B30, 58K45

%14B05; 14J17; 58K30.

\vspace{0.25cm}

\section*{Introduction}

 The study of singular varieties plays an important role  in the interface of Algebraic Geometry, Singularity Theory and F-Theory. 
 
 An aspect that was greatly explored by MacPherson
 was the 
 existence and uniqueness of Chern classes for singular algebraic varieties \cite{macpherson1974chern}.  Later Brasselet and Schwartz  \cite{brasselet1981classes,brasselet2009vector, schwartz1965classes} applied the 
  study of vector fields on singular varieties to provide a new framework in which  to consider the study of characteristic classes. Around the same period, Gutierrez and Sotomayor used  Singularity Theory  of differentiable maps to investigate $C^{1}$-structurally stable vector fields  on manifolds with cone, cross-cap, double crossing and triple crossing singularities, and proved their genericity in the set of $C^{1}$-vector fields. 
 In \cite{hernan2019}, these vector fields were studied by considering their associated continuous flows, making it possible to use algebraic topological tools, namely, the Conley Index Theory. These flows were named therein as  Gutierrez-Sotomayor flows, for short, GS flows.
 This work opened up many questions  in Topological Dynamics, more specifically, on  the global qualitative study of continuous flows on  singular varieties.  In \cite{LRdR}, spectral sequences were used as a tool to understand how a flow  undergoes homotopical cancellations on a $2$-dimensional singular variety.

In \cite{hernan2019}, the existence of a Lyapunov function $f$ associated to a GS flow was established, showing the existence of isolating blocks for the singularities of a GS flow and allowing one to define a Lyapunov graph $L_{f}$. Furthermore, the Poincaré-Hopf condition was proved to be a necessary  local condition on the flow defined on an isolating block, as well as, a necessary global condition on the flow defined on a closed singular manifold.

The results in \cite{hernan2019} opened up many questions in Topological Dynamics, more specifically: by imposing the Poincaré-Hopf conditions on an abstract Lyapunov graph $L$, are they sufficient to ensure the local realizability of $L$ as a GS flow on an isolating block? Does the local realizabilty of $L$ imply the global realizability of $L$ as a GS flow on a closed singular manifold?

In this work, we advance the local and global study of GS flows by addressing these open questions in \cite{hernan2019}. On what concerns the local realizability of $L$, that is, the realizability of a semi-graph $L_{v}$ consisting of a single vertex $v$ in $L$ and its incident edges as a GS flow on an isolating block, it turns out that the Poincaré-Hopf condition is not sufficient. On the other hand, even if locally the vertices and their incidents edges are realizable as isolating blocks of GS singularities, it may be the case that there is no global realization corresponding to $L$. The reason for this is the fact that locally the realization of $L_{v}$ as a GS flow on an isolating block $N$ is not unique due to the many choices of distinguished branched $1$-manifolds that can make up the boundary of $N$. This multiplicity imposes the main difficulty in the global realization question which relies heavily on an assignment that correctly matches up pairs of boundary components of isolating blocks, allowing the gluing of the blocks according to $L$ and making it possible to obtain a closed singular variety.

 In Section 1,  background material is presented.  In Section 2, the local realizability question is addressed. More specifically, the realizability of  an abstract Lyapunov semi-graph $L_{v}$ consisting of a single vertex and its incident weighted  edges as a GS flow on an isolating block $N$.  Furthermore, a complete characterization of the branched $1$-manifolds that make up the boundary of a minimal isolating block is provided, as well as, the construction of passageways for tubular flows that have the effect of increasing the number of branch points on the boundary of $N$ in order to match the weights on the edges of $L_{v}$.
 In Section 3, several theorems on the global realizability of an abstract Lyapunov graph $L$ labelled with GS singularities as a continuous  flows on a singular manifold $M$ are presented. In Section 4, the Euler characteristic of these manifolds  is computed in terms of the types of  GS singularities of the flow.

\section{Background}

\subsection{Gutierrez-Sotomayor Vector Fields}

In \cite{gutierrez1982stable}, Gutierrez and Sotomayor introduced the  \textit{$2$-manifolds with simple singularities}\footnote{In this work we follow the  terminology given by Gutierrez and Sotomayor by calling these singularities as {\it simple} and remark that there is a different notion of simple singularities in the classical classification theory of singularities of mappings (see \cite{gibson1979singular}).}. They arise when the regularity conditions in the definition of smooth surfaces of $\mathbb{R}^3$, in terms of implicit functions and immersions, are not satisfied but there is sitll the presence of some certain stability  \cite{gibson1979singular}. The  effect  is the increase of the  types of admissible local charts.

\newtheorem{Def}{Definition}

\begin{Def}\label{def:localcharts}
A subset $\mathbf{M}\subset\mathbb{R}^l$ is called a
 \textbf{$2$-manifold with simple singularities}, or a \textbf{GS $2$-manifold},
if for every point $p\in \mathbf{M}$ there is a neighbourhood $V_p$ of $p$ in $\mathbf{M}$ and 
a  $C^{\infty}$-diffeomorphism $\Psi:V_p\rightarrow \mathcal{P}$ such that
$\Psi(p)=0$ and $\mathcal{P}$ is one of the following subsets of $\mathbb{R}^3$:

\begin{enumerate}
		\item[i) ] $\mathcal{R} = \{(x, y, z): z = 0\}$, plane; 
		\item[ii) ] $\mathcal{C} = \{(x, y, z): z^2 - y^2 - x^2 = 0\}$, cone;
		\item[iii) ] $\mathcal{W} = \{(x, y, z): zx^2 - y^2 = 0, z \geq 0\}$, Whitney's umbrella\footnote{The locus  of the subset  $\mathcal{W}$ is called cross-cap. In this paper, we chose to keep the nomenclature used by Gutierrez and Sotomayor in  \cite{gutierrez1982stable}.};
		\item[iv) ] $\mathcal{D} = \{(x, y, z): xy = 0\}$, double crossing;
		\item[v) ] $\mathcal{T} = \{(x, y, z): xyz = 0\}$, triple crossing;
		\end{enumerate}

\end{Def}

The subsets  $\mathbf{M(\mathcal{P})} \subset \mathbf{M}$  of the  points of  $ \mathbf{M}$ which admit  local charts of  type  $\mathcal{P}$, where $\mathcal{P} = \mathcal{R}, \mathcal{C}, \mathcal{W}, \mathcal{D}, \mbox{ or } \mathcal{T}$, provide a decomposition of  $\mathbf{M}$, where the regular part  $\mathbf{M(\mathcal{R})}$ of $\mathbf{M}$  is a $2$-dimensional manifold, $\mathbf{M(\mathcal{D})}$ is a $1$-dimensional manifold, while $\mathbf{M(\mathcal{C})}$, $\mathbf{M(\mathcal{W})}$ and $\mathbf{M(\mathcal{T})}$  are discrete sets. Moreover, the collection  \{$\mathbf{M(\mathcal{P})}$, \ $\mathcal{P}$\}  is a stratification  of  $M$ in the sense of Thom,  \cite{thom1969ensembles}, hence $$\mathbf{M} = \bigcup_{\mathcal{P}}{ \mathbf{M(\mathcal{P})}}, \mbox{ where } \mathcal{P} = \mathcal{R}, \mathcal{C}, \mathcal{W}, \mathcal{D}, \mathcal{T}.$$

\begin{Def}
A vector field $\mathbf{X}$ of class $C^r$ on $\mathbb{R}^{l}$  is  \textbf{tangent to a manifold $\mathbf{M} \subset \mathbb{R}^{l}$ with simple singularities} if it is tangent to the smooth submanifolds $\mathbf{M(\mathcal{P})}$, for all $\mathcal{P} = \mathcal{R}, \mathcal{C}, \mathcal{W}, \mathcal{D}, \mathcal{T}$. The space of such vector fields  is denoted by  $\mathbf{\mathfrak{X}^{r}(\mathbf{M})}$.
\end{Def}

A flow $X_{t}$ associated to a vector field  $\mathbf{X} \in \mathbf{\mathfrak{X}^{r}(\mathbf{M})}$ is called a  Gutierrez-Sotomayor flow (GS flow, for short) on $\mathbf{M}$.

\begin{Def} Denote by $\Sigma^{r}(\mathbf{M})$  the set of all vector fields  $\mathbf{X} \in \mathbf{\mathfrak{X}^{r}(\mathbf{M})}$ satisfying:

\begin{itemize}
    \item $X$ has finitely many hyperbolic fixed points and hyperbolic periodic orbits;
    
    \item the singular limit cycles of  $X$  are simple and $X$ has no saddle connections;
    
    \item the $\alpha$-limit and  $\omega$-limit sets of every trajectory of  $X$ are fixed points, or a periodic orbits or else a singular cycle.
\end{itemize}

\end{Def}

In this paper we consider  GS flows associated to vector fields in  $\Sigma^{r}(\mathbf{M})$ without periodic orbits and singular cycles. The set of such vector fields is denoted by  $\Sigma^{r}_{0}(\mathbf{M})$.

\begin{Def}

Let $\mathbf{M}$ be a GS $2$-manifold and $\mathbf{X} \in \mathbf{\mathfrak{X}^{r}(\mathbf{M})}$ a vector field on $\mathbf{M}$. The \textbf{set of folds on} $\mathbf{M}$, denoted by $\mathcal{F}(\mathbf{M})$, is defined as:

$$\mathcal{F}(\mathbf{M}) = \mathbf{M}(\mathcal{D}) \setminus \mathcal{S}_{\mathcal{D}},$$

where $\mathcal{S}_{\mathcal{D}}$ is the set of double crossing points on $\mathbf{M}$ which are  singularities (stationary points) of $\mathbf{X}$.

\end{Def}

 A GS flow defined on a fold $F$ has the property that, given any point $x\in F$, the limit sets,  $\alpha(x)$ and $\omega(x)$,  are either a Whitney, double crossing or a triple crossing singularity.

\subsection{Conley Index and Isolating Blocks}

 Given a flow $X_t:\mathbf{M}\rightarrow \mathbf{M}$ and a subset  $S \subset \mathbf{M}$, define the \textit{maximal invariant set} of $S$, denoted by $\mbox{Inv}(S)$, as follows:
$$ \text{Inv}(S) = \{  x\in S \mid  X_{t}(x) \in S, \forall t \in \mathbb{R}  \}.$$  
A subset $S\subset \mathbf{M}$ is called an {\it isolated invariant set} with respect to the flow $X_{t}$ if there exists an {\it isolating neighborhood} $N$ for $S$, i.e., a compact set $N$ such that $ S= \text{Inv}(N) \subset \text{int}(N). $
An {\it index pair}  for an isolated invariant set $S$ is a pair of compact sets $L \subset N$ such that $\overline{N\backslash L}$ is an isolating neighborhood of $S$ in $M$,  $L$ is positively invariant relative to $N$ and  $L$ is the exiting set of the flow in $N$.

Conley proved that, given an isolated invariant set $S$, there exists an index pair $(N,L)$ for $S$. Also, if  $(N,L)$ and $(N',L')$ are two index pairs for $S$, then  the pointed spaces $(N/L,[L])$ and $(N'/L',[L'])$ have the same homotopy type. For more details, see \cite{conley1978isolated}.

\begin{Def}
Let $(N, L)$ be an index pair for an isolated invariant set $S$ with respect to a flow $X_{t}: \mathbf{M} \rightarrow \mathbf{M}$.

\begin{itemize}

\item[i) ] The \textbf{homotopy Conley index} of $S$ is the homotopy type of the pointed space $(N/L,[L])$.

\item[ii) ] The \textbf{homology Conley index} $CH_{k}(S)$ of $S$ is defined as the $k$-th reduced homology group of $(N/L,[L])$.

\item[iii) ] The \textbf{numerical Conley index} $h_{k}$ of $S$ is the rank of the homology Conley index $CH_{k}(S)$. 

\end{itemize}

\end{Def}

In \cite{hernan2019}, it was computed the homotopy and homology Conley indices for each type of  singularities of a GS flow associated to a vector field in $\Sigma^{r}_{0}(\mathbf{M})$.

\begin{Def}
An \textbf{isolating block}  is an isolating neighborhood  $N$ such that its entering and exiting sets, given respectively by  $$N^{+} = \{x \in N \ | \ \phi([0,T),x) \not\subseteq N, \ \forall T < 0\}$$ $$N^{-} = \{x \in N \ | \ \phi([0,T),x) \not\subseteq N, \ \forall T > 0\},$$ are both closed, with the additional property that the flow is transversal to the boundary of $N$.
\end{Def}

The existence of GS isolating blocks  follows directly from the existence of Lyapunov functions for GS flows without periodic orbits and singular cycles, proved in  \cite{hernan2019}. Recall that if  $p$ is a singularity of a GS flow and $f$ is a Lyapunov function with  $f(p) = c$,  then given $\epsilon > 0$ such that  their is no critical value of $f$ in $[c - \epsilon, c + \epsilon]$, the connected component $N$  of  $f^{-1}([c - \epsilon, c + \epsilon])$ which contains  $p$  is an isolating block for $p$. In this case, the exiting set is $N^{-} = N \cap f^{-1}(c - \epsilon)$.

The GS isolating blocks are a very useful tool on the construction of examples of $GS$ flows. In fact, given  a list of GS isolating blocks, we can successively  glue the connected components of the entering set of a block with homeomorphic connected components of the exiting set of another block, until a closed singular manifold is obtained.

In what follows, we consider some topological information on the connected components of the boundary $N^{+}$ and $N^{-}$, given by the Lyapunov semi-graphs associated to GS isolating blocks.

%%%%%%%%%%%%

\subsection{Lyapunov semi-graphs for GS singularities}\label{sec:grafos}

Given a flow $X_{t}$ associated to a vector field $\mathbf{X} \in \Sigma^{r}_{0}(\mathbf{M})$, there is an \textbf{associated  Lyapunov function}  $f: \mathbf{M} \rightarrow \mathbb{R}$ such that $f(p) \neq f(q)$ if $p$ and $q$ are different singularities of $X{_t}$, and for each stratum $\mathbf{M(\mathcal{P})}$ of $\mathbf{M}$ it follows that:

\begin{itemize}
    \item $f|_{\mathbf{M(\mathcal{P})}}$ is a smooth function, with $f$ continuous in $\mathbf{M}$,
    
    \item The critical points of  $f|_{\mathbf{M(\mathcal{P})}}$ are non degenerate and coincide with the singularities of $X_{t}$,
    
    \item $\displaystyle\frac{d}{dt}(f|_{\mathbf{M(\mathcal{P})}}(X_{t}x)) < 0$, if $x$ is not a singularity of $X_{t}$.
\end{itemize}

Note that the Lyapunov function does not need to be smooth globally, only continuous. The existence of this function is proven in \cite{hernan2019}.

In this work we make use of Lyapunov graphs and semi-graphs as a combinatorial tool that keeps track of topological and dynamical data of a flow on 
$\mathbf{M}$. This information on a GS flow $X_{t}$ on $\mathbf{M}$ or on an isolating block $N$ is transferred to a Lyapunov graph, respectively, semi-graph associated to $X_{t}$ and $f$ as follows: 

\begin{Def}
Let $f$ be a Lyapunov function for a GS flow $X_{t}$ associated to $\mathbf{X} \in \Sigma^{r}_{0}(\mathbf{M})$.
Two points  $x$ and $y$ on $\mathbf{M}$ are \textbf{equivalent} if and only if they belong to the same connected component of a level set of $f$. In this case, denote $x\sim_{f}y$. The relation $\sim_{f}$ defines an \textbf{equivalence relation} on $\mathbf{M}$. The quotient space 
$\mathbf{M}/\sim_{f}$ is called a \textbf{Lyapunov graph of  $\mathbf{M}$ asssociated to $X_{t}$ and $f$}. For some isolating block $N \subset \mathbf{M}$, the quotient space $N/\sim_{f}$ is called a \textbf{Lyapunov semi-graph of $N$ asssociated to $X_{t}$ and $f$}. The quocient space satisfies:

\begin{itemize}
    \item[i) ] each connected component of a level set  $f^{-1}(c)$ colapses to a point. Thus, as the value $c$ varies, $f^{-1}(c)/\sim_{f}$ describes  a finite set of  \textbf{edges}, each of which is labelled with a  \textbf{weight}, which corresponds to the first Betti number of the respective connected component;
    
    \item[ii) ] if  $c_{0}$ is such that, a connected component of $f^{-1}(c_{0})$ contains a singularity of  $X_{t}$, then this connected component is a \textbf{vertex} $v$ in the quotient space labelled with the numerical Conley index of the singularity \footnote{Equivalently, the numerical index will be, at times, substituted by the nature of the singularity}, $(h_{0}, h_{1}, h_{2})_{\mathcal{P}}$, together with its type $\mathcal{P} = \mathcal{R}, \mathcal{C}, \mathcal{W}, \mathcal{D}$ or $\mathcal{T}$;
    
    \item[iii) ] the number of positively ( resp. negatively)  incident edges on the vertex $v$, i.e., the indegree (resp. outdegree)  of $v$,  is denoted by $e_{v}^{+} (resp. e_{v}^{-})$.
\end{itemize}

\end{Def}

Moreover, a Lyapunov graph  of $\mathbf{M}$ as well as a Lyapunov semi-graph of $N$ associated to  $X_{t}$ and $f$ is finite, directed and with no oriented cycles. 
For more details, see 
 \cite{hernan2019}.

Let $X_{t}$ be a  GS flow associated to a vector field $\mathbf{X} \in \Sigma^{r}_{0}(\mathbf{M})$, such that $p$ is a singularity of $X_{t}$, and $N$ is an isolating block for $p$. Then  $(N, N^{-})$ constitutes an index pair for  $inv(N) = \{p\}$. In this case, the next result which is referred to as the  \textit{ Poincaré-Hopf condition} for GS flows, see \cite{hernan2019},  follows:

\newtheorem{Teo}{Theorem}

\begin{Teo}[Poincaré-Hopf Condition]\label{teoPH}
Let $(N_{1}, N_{0})$  be an index pair for a singularity $p$ on a GS $2$-manifold, $M$. Let $\mathbf{X} \in \Sigma^{r}_{0}(\mathbf{M})$ and $(h_{0}, h_{1}, h_{2})$ be the numerical Conley indices for  $p$. Then:

\begin{equation}\label{PH}
    (h_{2} - h_{1} + h_{0}) - (h_{2} - h_{1} + h_{0})^{\ast} = e^{+} - \mathcal{B}^{+} - e^{-} + \mathcal{B}^{-}
\end{equation}

where $^{\ast}$denotes the indices of the reverse flow, $e^{+}$ (resp., $e^{-}$) is the number of connected components of the entering (resp., exiting) set  of  $N_{1}$ and  $\mathcal{B}^{+} = \displaystyle\sum_{k=1}^{e_{k}^{+}}{b_{k}^{+}}$ (resp., $\mathcal{B}^{-} = \displaystyle\sum_{k=1}^{e_{k}^{-}}{b_{k}^{-}}$), where $b_{k}^{+}$ (resp., $b_{k}^{-}$) is the first Betti number of the $k$-th connected component of the entering (resp., exiting) set  of  $N_{1}$.
\end{Teo}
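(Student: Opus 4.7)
The plan is to convert both sides of (\ref{PH}) into Euler characteristics and match them up geometrically. Recall that $(h_0,h_1,h_2)$ are the ranks of the reduced homology of the pointed space $(N_1/N^-,[N^-])$, which agrees with the relative homology $H_\ast(N_1,N^-)$. Writing the alternating sum $h_2-h_1+h_0$ as the Euler characteristic of the relative homology and applying the long exact sequence of the pair $(N_1,N^-)$ gives
\begin{equation*}
h_2-h_1+h_0 \;=\; \chi(N_1)-\chi(N^-).
\end{equation*}
Applying the same argument to the time-reversed flow, whose index pair is $(N_1,N^+)$, yields $(h_2-h_1+h_0)^\ast=\chi(N_1)-\chi(N^+)$. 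Subtracting these two identities cancels $\chi(N_1)$ and produces the intermediate equality
\begin{equation*}
(h_2-h_1+h_0)-(h_2-h_1+h_0)^\ast \;=\; \chi(N^+)-\chi(N^-).
\end{equation*}

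The second step is to identify $\chi(N^\pm)$ with the combinatorial quantities $e^\pm-\mathcal{B}^\pm$ appearing on the right of (\ref{PH}). Because $X_t$ is transverse to $\partial N_1$ and $N_1$ is a compact piece of the singular $2$-manifold $\mathbf{M}$ built from connected components of a Lyapunov level set, each of $N^+$ and $N^-$ is a finite disjoint union of compact $1$-dimensional complexes (branched $1$-manifolds inherited from the stratification of $\mathbf{M}$, with possible branching at points of $\mathbf{M}(\mathcal{D})$, $\mathbf{M}(\mathcal{W})$ and $\mathbf{M}(\mathcal{T})$). For any compact $1$-dimensional CW complex $C$ one has $\chi(C)=\beta_0(C)-\beta_1(C)$; summing over connected components gives $\chi(N^+) = e^+-\mathcal{B}^+$ and $\chi(N^-) = e^--\mathcal{B}^-$. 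Plugging these into the intermediate equality produces exactly (\ref{PH}).

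The step I expect to be most delicate is the first one, namely justifying that the Euler-characteristic computations are valid in this singular, stratified setting. Concretely, one must check that $(N_1,N^-)$ is a good pair so that $\widetilde H_\ast(N_1/N^-)\cong H_\ast(N_1,N^-)$, that all spaces in sight have finitely generated homology, and that reversing the flow indeed produces the index pair $(N_1,N^+)$ with the entering and exiting sets swapped. All three points follow from the structure of GS isolating blocks recalled above: since $N_1$ is a connected component of $f^{-1}([c-\epsilon,c+\epsilon])$ for a Lyapunov function $f$, both $N_1$ and its entering/exiting sets are compact subcomplexes of $\mathbf{M}$, transversality of $X_t$ to $\partial N_1$ ensures that reversing time swaps $N^+$ and $N^-$, and cofibrancy of the pair is built into the definition of an isolating block. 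Once these technical points are in place, the rest of the argument is the two-line Euler characteristic computation described above, followed by the combinatorial identity for $\chi$ of a $1$-complex.
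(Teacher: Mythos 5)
Your proposal is correct. Note that this paper does not actually prove Theorem \ref{teoPH}; it imports the statement from \cite{hernan2019}, and the argument given there is essentially the one you describe: interpret $h_{2}-h_{1}+h_{0}$ as $\chi(N_{1},N^{-})=\chi(N_{1})-\chi(N^{-})$ via the long exact sequence of the index pair, do the same for the reverse flow with $(N_{1},N^{+})$, subtract to cancel $\chi(N_{1})$, and then use that each connected component of $N^{\pm}$ is a compact connected $1$-complex with Euler characteristic $1-b_{k}^{\pm}$, so that $\chi(N^{\pm})=e^{\pm}-\mathcal{B}^{\pm}$. The only point worth making explicit is that the statement is phrased for a general index pair $(N_{1},N_{0})$ while your computation (and the formula itself, which refers to entering and exiting sets) tacitly takes $N_{1}$ to be an isolating block with $N_{0}=N^{-}$; this is harmless because the Conley indices are independent of the choice of index pair, but it deserves a sentence.
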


On the other hand, note that the boundary of $N$ is nonempty, i.e.
 $\partial N = N^{+} \cup N^{-} \neq \emptyset$.  Hence $H_{2}(N) = 0$. Since $N$ is connected,  $\tilde{H_{0}}(N) = 0$. Consequently, from the long exact sequence $$0 \longrightarrow CH_{2}(p) \overset{\partial_{2}}{\longrightarrow} H_{1}(N^{-}) \overset{i_{1}}{\longrightarrow} H_{1}(N) \overset{p_{1}}{\longrightarrow} CH_{1}(p) \overset{\partial_{1}}{\longrightarrow} \tilde{H_{0}}(N^{-}) \longrightarrow 0$$ one has
\begin{equation}\label{sup}
dim(\tilde{H_{0}}(N^{-})) \leq dim(CH_{1}(p)) = h_{1}    
\end{equation}

Analogously, considering the reverse flow, one has:
\begin{equation}\label{sup2}
dim(\tilde{H_{0}}(N^{+})) \leq dim(CH_{1}(p)) = h_{1}^{\ast}    
\end{equation}

From  (\ref{PH}), (\ref{sup}) and (\ref{sup2}), one can describe how the GS isolating blocks look like in terms of their  Lyapunov semi-graphs. More specifically, let $v$ be the vertex  on the Lyapunov semi-graph of an isolating block for a GS singularity $p$,  then the positively incidents edges in $v$ ($e_{v}^{+}$)  and the  negatively incidents edges in $v$ ($e_{v}^{-}$) satisfy:
\begin{equation}\label{st}
    dim(\tilde{H_{0}}(N^{-}) = dim(H_{0}(N^{-})) - 1 = e_{v}^{-} - 1 \leq h_1
\end{equation}
\begin{equation}\label{nd}
    dim(\tilde{H_{0}}(N^{+}) = dim(H_{0}(N^{+})) - 1 = e_{v}^{+} - 1 \leq h_1^{\ast} 
\end{equation}

Therefore, considering all the possibilities for  $e_{v}^{-}$ in (\ref{st}) and for $e_{v}^{+}$ in (\ref{nd}),  together with the  respective  Poincaré-Hopf conditions (\ref{PH}), the necessary conditions on a Lyapunov semi-graph for a GS isolating block were obtained in  \cite{hernan2019} for each type of GS singularity as well as their nature. 

Roughly speaking, we can define the {\bf nature of a singularity} as follows.  If a singularity is an attractor (resp. repeller), we say it has {attracting} (resp. {repelling}) {nature}, for short, nature $\mathbf{a}$ (resp. nature $\mathbf{r}$). In the particular case of a singularity of type $\mathcal{D}$ since it has two attracting (resp. repelling) natures, we say it has nature $\mathbf{a^{2}}$ (resp. $\mathbf{r^{2}}$). Similarly, in the case of a singularity of type $\mathcal{T}$, since it has  three attracting (resp. repelling) natures, we say it has nature $\mathbf{a^{3}}$ (resp. $\mathbf{r^{3}}$).
A singularity of type $\mathcal{R}$ or  $\mathcal{C}$ which is neither of attracting or repelling nature is said to have saddle nature, for short, nature $\mathbf{s}$. A regular saddle on a disk identified along the stable (resp. unstable) manifolds so as to produce a Whitney chart is of nature $\mathbf{s_s}$ (resp. $\mathbf{s_u}$). Two regular saddles on two disjoint disks identified along their stable (resp. unstable) manifold   so as to produce a double crossing chart is of type $\mathbf{ss_s}$ (resp. $\mathbf{ss_u}$). A regular saddle and an attracting (resp. repelling) singularity on two disjoint disks identified along a stable (resp. unstable) direction so as to produce a double crossing chart is of nature $\mathbf{sa}$ (resp. $\mathbf{sr}$).  Similarly, two regular saddles and  an attracting (resp. repelling) singularity on three disjoint disks identified as follows:
the two saddle disks identify along their unstable (resp. stable) manifolds and subsequently  the attracting (resp., repelling) disk  identified along  stable (resp. unstable) directions so as to produce a triple crossing chart is of nature $\mathbf{ssa}$ (resp. $\mathbf{ssr}$).
 For more details  see \cite{hernan2019}.

In Table \ref{tab:RCWDT} these conditions are presented for all GS singularities with natures $a, \ a^2, \ a^3 \ s, \ s_{s}, \ sa, \ ss_{s}$ and $ssa$. By reserving the flow, one obtains the conditions on Lyapunov semi-graphs for GS singularities with natures  $r, \ r^2, \ r^3, \ s, \ s_{u}, \ sr, \ ss_{u}$ and $ssr$, respectively. Also the corresponding Poincaré-Hopf condition is given by exchanging $\mathcal{B}^{+}$ by $\mathcal{B}^{-}$ and vice versa.

\newcolumntype{C}[1]{>{\centering\let\newline\\\arraybackslash\hspace{0pt}}m{#1}}

%%%%%%%%%%%%%%%%%%%%%%%%%%%%%%%%%%%%%%%%%%%%%%%%%%%%%%%%%%%	
%%%%%%%%%%%%%%%%%%%%%%%%%%%%%%%%%%%%%%%%%%%%%%%%%%%%%%%%%%%
%%%%%%%%%%%%%%%%%%%%%%%%%%%%%%%%%%%%%%%%%%%%%%%%%%%%%%%%%%%
%%%%%%%%%%%%%%%%%%%%%%%%%%%%%%%%%%%%%%%%%%%%%%%%%%%%%%%%%%%
%%%%%%%%%%%%%%%%%%%%%%%%%%%%%%%%%%%%%%%%%%%%%%%%%%%%%%%%%%%

 \begin{table}[!htb]
        \centering
        \resizebox{\linewidth}{!}{
        \begin{tabular}{C{2.2cm}cccc|cccc}
            \cellcolor{gray!40} Type & \multicolumn{3}{c}{\cellcolor{gray!40} Regular} & \multicolumn{2}{c}{\cellcolor{gray!40}} & \multicolumn{3}{c}{\cellcolor{gray!40} Cone} \\
            \hline
            \cellcolor{gray!20} Nature & \cellcolor{gray!20} $a$ & \cellcolor{gray!20} $s$ & \cellcolor{gray!20} $s$ & \multicolumn{2}{c}{\cellcolor{gray!20}} & \cellcolor{gray!20} $a$ & \cellcolor{gray!20} $s$ & \cellcolor{gray!20} $s$ \\
            \hline
            & & & & & & & & \\
            Lyapunov semi-graph & \includegraphics[align=c,scale=0.5]{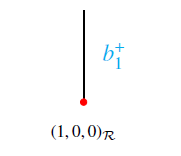} & \includegraphics[align=c,scale=0.5]{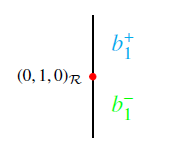} & \includegraphics[align=c,scale=0.5]{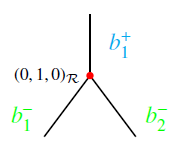} & \hspace{0.5cm} & \hspace{0.5cm} & \includegraphics[align=c,scale=0.5]{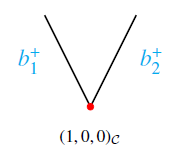} & \includegraphics[align=c,scale=0.5]{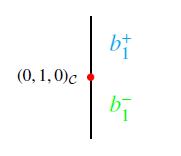} & \includegraphics[align=c,scale=0.5]{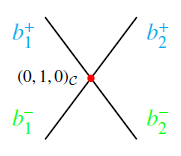} \\
            & & & & & & & & \\
            Poincaré-Hopf condition & $\mathcal{B}^{+} = 1$ & $\mathcal{B}^{+} = \mathcal{B}^{-}$ & $\mathcal{B}^{+} = \mathcal{B}^{-} - 1$ & & & $b_{1}^{+} = b_{2}^{+} = 1$ & $\mathcal{B}^{+} = \mathcal{B}^{-}$ & $\mathcal{B}^{+} = \mathcal{B}^{-}$ \\
            & & & & & & & & \\
            %\hline
%        \end{tabular}}
%        \caption{Lyapunov semi-graphs for a singularity of type regular}
%        \label{tab:R2}
%    \end{table}
%
%
%	
%%%%%%%%%%%%%%%%%%%%%%%%%%%%%%%%%%%%%%%%%%%%%%%%%%%%%%%%%%%%	    
%    
%    
% \begin{table}[!htb]
%        \centering
%        \resizebox{\linewidth}{!}{
%        \begin{tabular}{cccccccc}
            \cellcolor{gray!40} Type & \multicolumn{3}{c}{\cellcolor{gray!40} Whitney} & \multicolumn{2}{c}{\cellcolor{gray!40}} & \multicolumn{3}{c}{\cellcolor{gray!40} Double crossing} \\
            \hline
            \cellcolor{gray!20} Nature & \cellcolor{gray!20} $a$ & \cellcolor{gray!20} $s$ & \cellcolor{gray!20} $s$ & \multicolumn{2}{c}{\cellcolor{gray!20}} & \cellcolor{gray!20} $a$ & \cellcolor{gray!20} $sa$ & \cellcolor{gray!20} $sa$ \\
            \hline
            & & & & & & & & \\
            Lyapunov semi-graph & \includegraphics[align=c,scale=0.5]{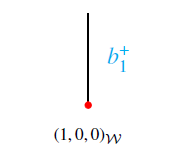} & \includegraphics[align=c,scale=0.5]{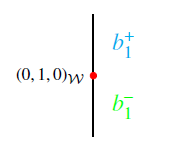} & \includegraphics[align=c,scale=0.5]{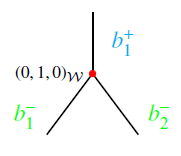} & \hspace{0.5cm} & \hspace{0.5cm} & \includegraphics[align=c,scale=0.5]{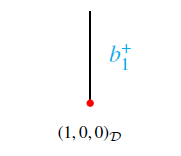} & \includegraphics[align=c,scale=0.5]{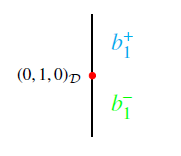} & \includegraphics[align=c,scale=0.5]{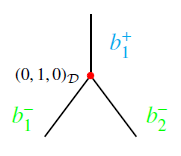} \\
            & & & & & & & & \\
            Poincaré-Hopf condition & $\mathcal{B}^{+} = 2$ & $\mathcal{B}^{+} = \mathcal{B}^{-} + 1$ & $\mathcal{B}^{+} = \mathcal{B}^{-}$ & & & $\mathcal{B}^{+} = 3$ & $\mathcal{B}^{+} = \mathcal{B}^{-} + 2$ & $\mathcal{B}^{+} = \mathcal{B}^{-} + 1$ \\
            & & & & & & & & \\
            %\hline
%        \end{tabular}}
%        \caption{Lyapunov semi-graphs for a singularity of type regular}
%        \label{tab:R2}
%    \end{table}    
%   
%    
%    
%    
%    
%    
%    
%%%%%%%%%%%%%%%%%%%%%%%%%%%%%%%%%%%%%%%%%%%%%%%%%%%%%%%%%%%%    
%
%
%
% \begin{table}[!htb]
%        \centering
%        \resizebox{\linewidth}{!}{
%        \begin{tabular}{cccccccc}
            \cellcolor{gray!40} Type & \multicolumn{8}{c}{\cellcolor{gray!40} Double crossing} \\
            \hline
            \cellcolor{gray!20} Nature & \cellcolor{gray!20} $ss_{s}$ & \cellcolor{gray!20} $ss_{s}$ & \cellcolor{gray!20} $ss_{s}$ & \multicolumn{2}{c}{\cellcolor{gray!20}} & \cellcolor{gray!20} $ss_{s}$ & \cellcolor{gray!20} $ss_{s}$ & \cellcolor{gray!20} $ss_{s}$ \\
            \hline
            \multicolumn{9}{c}{} \\
            Lyapunov semi-graph & \includegraphics[align=c,scale=0.5]{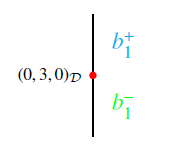} & \includegraphics[align=c,scale=0.5]{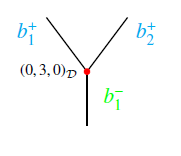} & \includegraphics[align=c,scale=0.5]{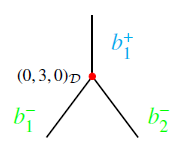} & \multicolumn{2}{c}{} & \includegraphics[align=c,scale=0.5]{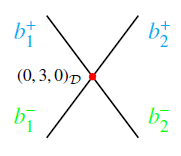} & \includegraphics[align=c,scale=0.5]{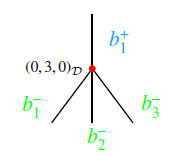} & \includegraphics[align=c,scale=0.5]{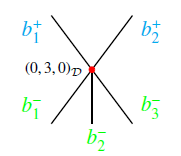} \\
            \multicolumn{9}{c}{} \\
            Poincaré-Hopf condition & $\mathcal{B}^{+} = \mathcal{B}^{-} + 2$ & $\mathcal{B}^{+} - 3 = \mathcal{B}^{-}$ & $\mathcal{B}^{+} = \mathcal{B}^{-} + 1$ & \multicolumn{2}{c}{} & $\mathcal{B}^{+} = \mathcal{B}^{-} + 2$ & $\mathcal{B}^{+} = \mathcal{B}^{-}$ & $\mathcal{B}^{+} = \mathcal{B}^{-} + 1$ \\
            \multicolumn{9}{c}{} \\
            %\hline
%        \end{tabular}}
%        \caption{Lyapunov semi-graphs with the respective Poincaré-Hopf condition}
%        \label{tab:R2}
%    \end{table}
%
%
%	
%%%%%%%%%%%%%%%%%%%%%%%%%%%%%%%%%%%%%%%%%%%%%%%%%%%%%%%%%%%%	  
%
%
%
%
% \begin{table}[!htb]
%        \centering
%        \resizebox{\linewidth}{!}{
%        \begin{tabular}{cccccccc}
           \cellcolor{gray!40} Type & \multicolumn{3}{c}{\cellcolor{gray!40} Double crossing} & \multicolumn{2}{c}{\cellcolor{gray!40}} & \multicolumn{3}{c}{\cellcolor{gray!40} Triple crossing} \\
            \hline
            \cellcolor{gray!20} Nature & \cellcolor{gray!20} $ss_{s}$ & \cellcolor{gray!20} $ss_{s}$ & \cellcolor{gray!20} & \multicolumn{2}{c}{\cellcolor{gray!20}} & \cellcolor{gray!20} $a$ & \cellcolor{gray!20} $ssa$ & \cellcolor{gray!20} $ssa$ \\
            \hline
            & & & & & & & & \\
            Lyapunov semi-graph & \includegraphics[align=c,scale=0.5]{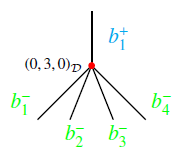} & \includegraphics[align=c,scale=0.5]{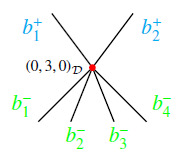} & & \hspace{0.5cm} & \hspace{0.5cm} & \includegraphics[align=c,scale=0.5]{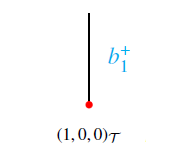} & \includegraphics[align=c,scale=0.5]{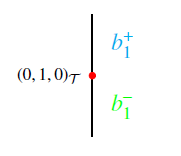} & \includegraphics[align=c,scale=0.5]{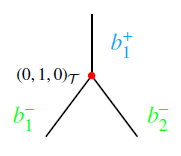} \\
            & & & & & & & & \\
            Poincaré-Hopf condition & $\mathcal{B}^{+} = \mathcal{B}^{-} - 1$ & $\mathcal{B}^{+} = \mathcal{B}^{-}$ & & & & $b_{1}^{+} = 7$ & $\mathcal{B}^{+} = \mathcal{B}^{-} + 2$ & $\mathcal{B}^{+} = \mathcal{B}^{-} + 1$ \\
            & & & & & & & & \\
            \hline
            \multicolumn{9}{c}{} \\
        \end{tabular}}
        \caption{Lyapunov semi-graphs with the respective Poincaré-Hopf condition}
        \label{tab:RCWDT}
    \end{table}

For each semi-graph in Table \ref{tab:RCWDT}, considering that $b_{i}^{\pm}$ is greater or equal to one, we define the respective \textbf{Lyapunov semi-graph with minimal weights} to be the one with smallest positive integers satisfying the Poincaré-Hopf condition.

In the next section we will consider the realizability of an abstract Lypaunov semi-graph satisfying the Poincaré-Hopf condition.

\section{Construction of GS Isolating Blocks}\label{sec:construcao}

In \cite{hernan2019}, the construction of GS isolating blocks was presented in a similar fashion to the construction of compact manifolds via the classical Handle Theory. Roughly speaking, a GS handle of type $\mathcal{R}, \mathcal{C}, \mathcal{W}, \mathcal{D}$ or $\mathcal{T}$ is a subspace of $\mathbb{R}^{3}$ homeomorphic to a chart presented in Definition \ref{def:localcharts} and with a  well-defined local GS flow. Given a GS handle $\mathcal{H}^{\mathcal{P}}_{\eta}$ containing a singularity $p$ of type $\mathcal{P}$ and nature $\eta$, we consider the gluing of the attaching region of the handle $\mathcal{H}^{\mathcal{P}}_{\eta}$ on a \textit{distinguished branched $1$-manifold}. See \cite{hernan2019} for more details.

\begin{Def}\label{def:ramificada}
A \textbf{distinguished branched $1$-manifold}  is a topological space containing at most 
$4$ connected components  each of which has a finite number of  \textbf{branched charts}, where each branched chart is an intersection of two transverse arcs. This intersection of two transverse arcs is called a \textbf{branch point}.
\end{Def}

In what follows, a schematic description of the steps in the construction of GS isolating blocks $N$ for a singularity  $p \in \mathcal{H}^{\mathcal{P}}_{\eta}$ is presented.
\vspace{0.2cm}

 \textbf{Steps in the Construction of the Isolating Blocks}
\vspace{-0.2cm}
\begin{enumerate}

    \item \textbf{Identifying the attaching region $A_{k}$}
    
   The attaching region denoted by  $A_{k}$ is given by the unstable part of $\mathcal{H}^{\mathcal{P}}_{\eta}$, which resembles the attaching sphere in the classical Handle Theory;
    
    \item \textbf{Choosing a distinguished branched  $1$-manifold $N^{-}$}
    
  In order to guarantee that the isolating block is connected, the chosen branched  $1$-manifold, denoted by  $N^{-}$, must not contain more connected components than  $A_{k}$;
    
    \item \textbf{Gluing $A_{k}$ in $N^{-} \times [0,1]$}
    
   The gluing of the handle  $\mathcal{H}^{\mathcal{P}}_{\eta}$ on a collar of a  distinguished  branched $1$- manifold,  $N^{-}\times [0,1]$,  given by  any embedding  $f: A_{k} \rightarrow N^{-} \times \{1\}$ with the property that
 it maps at least one connected component of  $A_{k}$ to each connected component of  $N^{-} \times \{1\}$ produces ${N}$;
    
    \item \textbf{Stretching ${N}$}
    
     Stretch $N$ in the direction of the time-reversed flow.

\end{enumerate}

Note that in the above construction,  the distinguished  branched $1$-manifold is denoted by $N^{-}$ precisely because it corresponds to the exiting set  of the isolating block $N$. Also, the reason that the  branched $1$-manifolds have at most $4$ connected  components is because  the maximum number of connected components admitted by the attaching region of a GS handle is $4$, \cite{hernan2019}.

\subsection{Minimal GS Isolating Blocks}

Let $N$ be a GS isolating block for  $p \in \mathcal{H}^{\mathcal{P}}_{\eta}$, determined  by an embedding $f: A_{k} \rightarrow N^{-} \times \{1\}$, where $A_{k}$ is the unstable part of  $\partial\mathcal{H}^{\mathcal{P}}_{\eta}$ and  $N^{-} \times \{1\}$  is a  distinguished  branched $1$-manifold. Since  $f$ is an embedding, it follows that the number of branched charts in $N^{-} \times \{1\} = N^{-} \times \{0\} = N^{-}$  is greater or equal to the number of branched charts in  $A_{k}$.

\begin{Def}
Let $p \in \mathcal{H}^{\mathcal{P}}_{\eta}$ be a  GS singularity  and $N$ an isolating block for $p$.  $N$ is a \textbf{minimal GS isolating block} for $p$ if the exiting set  $N^{-}$ has the same number of branched charts as the unstable part of $\partial\mathcal{H}^{\mathcal{P}}_{\eta}$.
\end{Def}

\begin{Teo}\label{teo:realizationminimalsemi}
A Lyapunov semi-graph $L_{v}$ with a single vertex $v$ labelled with a GS singularity is associated to a GS flow on a minimal GS isolating block if and only if:
\begin{itemize}

\item[a) ] $L_{v}$ satisfies the Poincaré-Hopf condition with minimal weights;

    \item[b) ]  if $v$ is labelled with a singularity of type $\mathcal{D}$ and nature $ss_{s}$ (resp. $ss_{u}$) such that $e_{v}^{+} = 2$ (resp. $e_{v}^{-} = 2$), then $e_{v}^{-} \leq 2$ and $b_{1}^{+} = b_{2}^{+}$ (resp. $e_{v}^{+} \leq 2$ and $b_{1}^{-} = b_{2}^{-}$);
    
    \item[c) ]  if $v$ is labelled with a singularity of type $\mathcal{T}$ and nature $ssa$ (resp. $ssr$) such that $e_{v}^{-} = 2$ (resp. $e_{v}^{+} = 2$), then $b_{1}^{-} = b_{2}^{-}$ (resp. $b_{1}^{+} = b_{2}^{+}$).
\end{itemize}
\end{Teo}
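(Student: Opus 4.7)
The plan is to prove both directions by a case analysis over the five singularity types $\mathcal{R}, \mathcal{C}, \mathcal{W}, \mathcal{D}, \mathcal{T}$ and their admissible natures, combining the Poincaré-Hopf condition of Theorem \ref{teoPH} with the four-step construction of GS isolating blocks recalled in Section \ref{sec:construcao}. For each type and nature $(\mathcal{P},\eta)$ I would first fix, as a combinatorial invariant, the number $\beta(\mathcal{P},\eta)$ of branched charts in the unstable part $A_k$ of $\partial \mathcal{H}^{\mathcal{P}}_\eta$; this is what the minimality condition ultimately controls.

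For necessity, suppose $L_v$ is realized by a GS flow on a minimal isolating block $N$ for a singularity $p \in \mathcal{H}^{\mathcal{P}}_\eta$. The Poincaré-Hopf equation (\ref{PH}) holds by Theorem \ref{teoPH}, so only the \emph{minimality} of the weights must be established. Since $N$ is minimal, the exiting set $N^-$ has exactly $\beta(\mathcal{P},\eta)$ branched charts; reversing the flow yields the analogous statement for $N^+$. Because the first Betti number of a component of a distinguished branched 1-manifold is determined by the number of branch points it carries, this forces each $b_k^\pm$ to take the smallest positive-integer value compatible with (\ref{PH}), giving (a). For (b) and (c), the extra constraints arise from the fact that in the $\mathcal{D}$-handle $\mathcal{H}^{\mathcal{D}}_{ss_s}$ and the $\mathcal{T}$-handle $\mathcal{H}^{\mathcal{T}}_{ssa}$ the attaching region $A_k$ contains two saddle-disks glued along their unstable manifolds. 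When $e_v^+ = 2$ (respectively $e_v^- = 2$ for $\mathcal{T}$), any embedding $f\colon A_k \to N^-\times\{1\}$ compatible with this internal identification must send the two saddle-disks to components of $N^\pm$ carrying the same number of branch charts, which forces $b_1^\pm = b_2^\pm$; the bound on the opposite valence ($e_v^- \le 2$, etc.) falls out of the same argument applied to the complementary part of $\partial N$.

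For sufficiency, assume $L_v$ satisfies (a), (b), (c). I would run the four-step construction explicitly: from the minimal weights in Table \ref{tab:RCWDT} one reads off a distinguished branched 1-manifold $N^-$ with exactly $\beta(\mathcal{P},\eta)$ branched charts and with its component-Betti numbers prescribed by the $b_k^-$; step 3 then asks for an embedding $f\colon A_k \to N^-\times\{1\}$ mapping each connected component of $A_k$ onto a matching component of $N^-$. In the regular, cone and Whitney cases, the matching is automatic since $A_k$ is connected or consists of components of distinct roles. In the $\mathcal{D}$ and $\mathcal{T}$ cases, hypotheses (b) and (c) are exactly what is needed to produce such an $f$: they guarantee that the two saddle-branches of $A_k$ can be sent to components of equal branching complexity, so that the internal identification of the handle descends to a well-defined embedding. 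Stretching in reverse time (step 4) completes the construction of a minimal isolating block $N$ whose induced Lyapunov semi-graph is by design $L_v$.

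The main obstacle I expect lies entirely in the $\mathcal{D}$- and $\mathcal{T}$-cases with saddle-natures $ss_s, ss_u, ssa, ssr$, where the attaching region carries an intrinsic identification of two (or more) saddle disks. The bookkeeping of how branch points must be distributed across the components of $N^-$ and $N^+$, so that the embedding $f$ extends to a well-defined map on the whole handle, is the delicate point; conditions (b) and (c) precisely capture the obstruction, and verifying that they are not only necessary but sufficient will require exhibiting, in each sub-case, an explicit branched 1-manifold together with an explicit embedding realizing the prescribed semi-graph.
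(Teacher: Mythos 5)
Your proposal is correct and follows essentially the same route as the paper: the paper's proof simply defers to Theorem \ref{teo:colecao} (the explicit, case-by-case construction over all types, natures, and admissible choices of $N^{-}$, which supplies the sufficiency direction and the minimality of the weights) and Theorem \ref{teo:combinatorial} (the combinatorial obstruction analysis showing that the configurations excluded by (b) and (c) force the wrong number of boundary components), which together are precisely the two halves you outline. The only point to tighten is that for nature $ss_{s}$ the constraint $b_{1}^{+}=b_{2}^{+}$ is governed by how the identified stable orbits distribute branch points over the components of the \emph{entering} set (as in the paper's Figure \ref{fig:bordos-exemplo}), not directly by the embedding of $A_{k}$ into $N^{-}\times\{1\}$, though your identification of the internal saddle-disk identification as the source of the obstruction is the right idea.
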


 \begin{proof}
 It follows from Theorem \ref{teo:colecao} and Theorem \ref{teo:combinatorial}.
 \end{proof}

   In other words, the boundaries of a minimal GS isolating block are distinguished  branched $1$-manifolds that  have the minimal number of branch points admitted by the singularity. In the case of attracting or repelling singularities,  the choice of  distinguished  branched $1$-manifold is unique.

\newtheorem{Prop}{Proposition}

\begin{Prop}\label{proposicao}
Let $p \in \mathcal{H}^{\mathcal{P}}_{a}$ (resp. $p \in \mathcal{H}^{\mathcal{P}}_{r}$) be a GS singularity of attracting (resp. repelling) nature of type $\mathcal{R}, \mathcal{C}, \mathcal{W}, \mathcal{D}$ or $\mathcal{T}$. Then, $p$ admits a unique minimal GS isolating block up to homeomorphism.
\end{Prop}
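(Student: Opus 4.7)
The plan is to first use time-reversal symmetry to reduce the attracting case to the repelling case, and then verify uniqueness case-by-case for each type $\mathcal{P}\in\{\mathcal{R},\mathcal{C},\mathcal{W},\mathcal{D},\mathcal{T}\}$. If $p\in\mathcal{H}^{\mathcal{P}}_{a}$ is attracting, then under time reversal $p$ becomes a repelling singularity of the same type, and an isolating block $N$ for the original flow is, as a set, an isolating block for the reverse flow. Since minimality is defined through the branched structure of $\partial N$, any uniqueness statement up to homeomorphism for repellers implies the analogous statement for attractors.

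Turning to the repelling case, I would first describe explicitly the attaching region $A_k$, i.e.\ the unstable part of $\partial\mathcal{H}^{\mathcal{P}}_{r}$. From the local models in Definition \ref{def:localcharts}, $A_k$ is: a single circle for $\mathcal{R}$; two disjoint circles for $\mathcal{C}$; a figure-eight (one branch point) for $\mathcal{W}$; two circles meeting transversely at two branch points for $\mathcal{D}$ (first Betti number $3$); and three circles meeting pairwise in two points (six branch points in total) for $\mathcal{T}$ (first Betti number $7$). These Betti numbers match the entries of Table \ref{tab:RCWDT} for the corresponding attracting natures, confirming that $A_k$ itself is already a distinguished branched $1$-manifold of minimal type for $p$.

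Next, I would argue that the minimality requirement, together with the minimal-weights condition from Theorem \ref{teo:realizationminimalsemi}(a), forces the exiting set $N^{-}$ to be homeomorphic to $A_k$ as a branched $1$-manifold. Indeed, minimality dictates that the number of branch charts of $N^{-}$ equals that of $A_k$; step 2 of the construction bounds the number of connected components of $N^{-}$ above by the number of components of $A_k$; and the minimal-weight condition pins down the first Betti numbers of those components. Together these combinatorial constraints determine $N^{-}$ up to homeomorphism, and the required embedding $f\colon A_k\to N^{-}\times\{1\}$ is then essentially a homeomorphism between them.

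Finally, any two such embeddings differ by an automorphism of $A_k$ (as a distinguished branched $1$-manifold, preserving its component decomposition), and each such automorphism extends to a self-homeomorphism of the local model $\mathcal{H}^{\mathcal{P}}_{r}$ via the symmetry of the chart in Definition \ref{def:localcharts}. Consequently, the isolating block produced in steps 3--4 of the construction is unique up to homeomorphism. I expect the main obstacle to be the case $\mathcal{P}=\mathcal{T}$: here $A_k$ has six branch points and a rich symmetry group (permutations of the three intersecting planes together with local swaps along each double line), so one must verify carefully that every such symmetry extends through the gluing to a self-homeomorphism of the whole block, rather than producing a topologically distinct realization.
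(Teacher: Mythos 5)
Your proposal is essentially correct and follows the same route as the paper: reduce everything to the observation that for a repeller the attaching region $A_k$ is the entire boundary $\partial\mathcal{H}^{\mathcal{P}}_{r}$, so the embedding into $N^{-}$ forces $N^{-}\cong A_k$ and the block is just a collar extension of the handle. Two small remarks. First, the paper disposes of the attracting case directly ($A_k=\emptyset$, so there is no choice to make and the block is the handle itself) rather than by time reversal; your reduction is valid but goes from the trivial case to the nontrivial one. Second, your intermediate claim that the number of branch points, number of components and first Betti numbers ``determine $N^{-}$ up to homeomorphism'' is not true on its own: for type $\mathcal{T}$ a connected distinguished branched $1$-manifold with six branch points and $b_1=7$ is far from unique (the paper itself notes the rapid growth of non-homeomorphic branched $1$-manifolds with fixed weight). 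What actually forces $N^{-}\cong A_k$ is the embedding $f$: since $A_k$ is compact without boundary, branch points map to branch points, and equality of branch counts makes the image open and closed, hence $f$ is onto each component and is a homeomorphism. Once that is in place, your final worry about symmetries of $A_k$ for $\mathcal{P}=\mathcal{T}$ dissolves: gluing an external collar $N^{-}\times[0,1]$ to $\mathcal{H}^{\mathcal{P}}_{r}$ along \emph{any} homeomorphism of the boundary yields a space homeomorphic to $\mathcal{H}^{\mathcal{P}}_{r}$, so no case-by-case analysis of the automorphism group is needed.
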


\begin{proof}

In the case of an attracting singularity $p \in \mathcal{H}^{\mathcal{P}}_{a}$ the result is trivial, since $A_{k} = \emptyset$.

Given  $p \in \mathcal{H}^{\mathcal{P}}_{r}$,  an  repelling  singularity,  since  $A_{k} = \partial \mathcal{H}^{\mathcal{P}}_{r}$, then $N^{-} = \partial \mathcal{H}^{\mathcal{P}}_{r}$ is the unique choice for  a distinguished  branched $1$-manifold that permits the gluing to occur via the embedding $f = Id$. Hence, a repelling singularity  admits a unique isolating block  which is homotopic to the local chart itself,  $\mathcal{H}^{\mathcal{P}}_{r}$. 

\begin{table}[!htb]
			\centering
			\resizebox{0.8\linewidth}{!}{
			\begin{tabular}{ccccccccccccccc}
			 \cellcolor{gray!20} & \cellcolor{gray!20} $p \in \mathcal{H}^{\mathcal{R}}_{a}$ & \cellcolor{gray!20} & \cellcolor{gray!20} & \cellcolor{gray!20} $p \in \mathcal{H}^{\mathcal{C}}_{a}$ & \cellcolor{gray!20} & \cellcolor{gray!20} & \cellcolor{gray!20} $p \in \mathcal{H}^{\mathcal{W}}_{a}$ & \cellcolor{gray!20} & \cellcolor{gray!20} & \cellcolor{gray!20} $p \in \mathcal{H}^{\mathcal{D}}_{a}$ & \cellcolor{gray!20} & \cellcolor{gray!20} & \cellcolor{gray!20} $p \in \mathcal{H}^{\mathcal{T}}_{a}$ & \cellcolor{gray!20} \\
			  \hline
			   &  &  &  &  &  &  &  &  &  &  &  &  &  &  \\
			   & \includegraphics[align=c,scale=0.35]{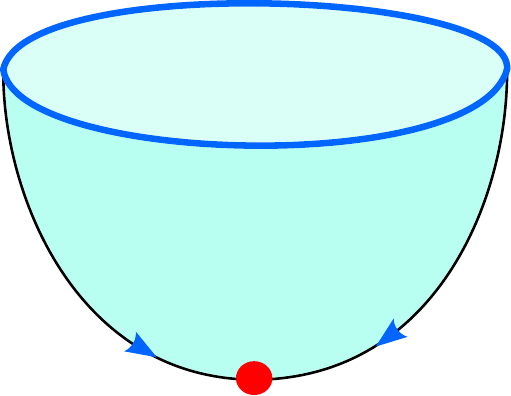} &  &  & \includegraphics[align=c,scale=0.45]{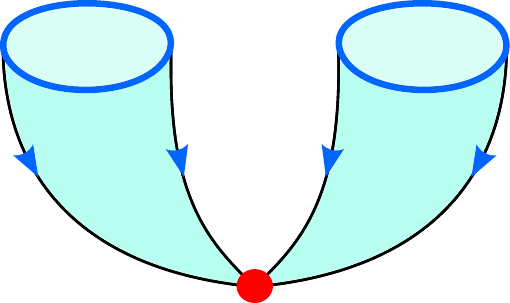} &  &  & \includegraphics[align=c,scale=0.6]{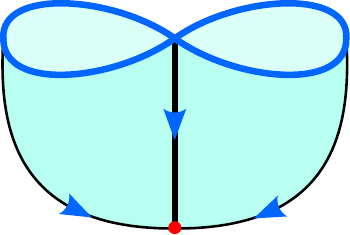} &  &  & \includegraphics[align=c,scale=0.45]{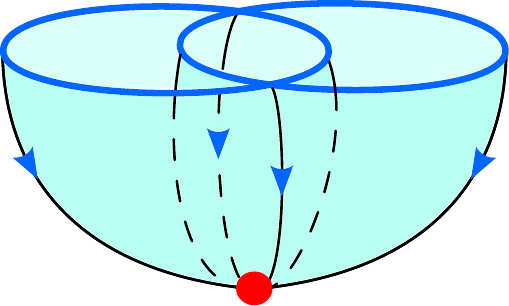} &  &  & \includegraphics[align=c,scale=0.5]{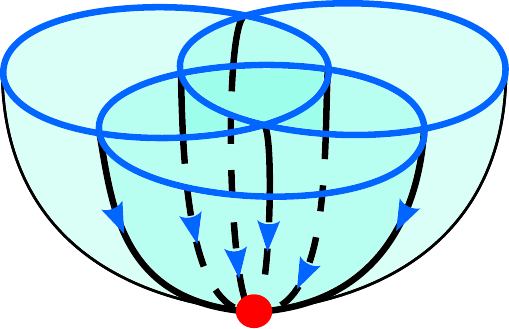} &  \\
			   &  &  &  &  &  &  &  &  &  &  &  &  &  &  \\
			\end{tabular}}
			\caption{GS isolating blocks for attracting singularities}
			\label{tab:blocoA}
		\end{table}

%\newpage

By reversing the flow in Table \ref{tab:blocoA},    the minimal GS isolating blocks for repelling singularities are obtained.

\end{proof}

On the other hand, the construction of minimal GS
isolating blocks  for singularities of different saddle natures, admits different choices of distinguished branched $1$-manifolds and consequently different ways of embedding $A_{k}$ in $N^{-}\times [0,1]$.
In the next result, the possible embeddings are characterized in terms of the  distinguished  branched $1$-manifolds of resulting isolating blocks.

\begin{Teo}\label{teo:colecao}
Let  $\mathcal{H}^{\mathcal{P}}_{\eta}$ be a  GS handle for a singularity $p$
 of type $\mathcal{P}$ and nature $n$ and $N$ a minimal GS isolating block for $p$. Then  all possible distinguished  branched $1$-manifolds that form the connected components of the entering and exiting sets of $N$  are described in
 Table \ref{tab:bordos-colecao}, in terms of the Lyapunov semi-graph of  $N$. This characterization is up to flow reversal.
\end{Teo}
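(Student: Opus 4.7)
The plan is to argue by exhaustive case analysis over the pairs $(\mathcal{P},\eta)$ of type and nature. The pure attracting case $\eta=a$ and its reversal $\eta=r$ are already settled by Proposition \ref{proposicao}, which produces a unique minimal block; these account for the first column of each block of Table \ref{tab:bordos-colecao}. Only the saddle natures remain: $s$ for $\mathcal{R}$ and $\mathcal{C}$, $s_{s}$ (and $s_{u}$) for $\mathcal{W}$, $sa$ and $ss_{s}$ for $\mathcal{D}$, and $ssa$ for $\mathcal{T}$, with the $s_u$, $sr$, $ss_u$, $ssr$ cases obtained by flow reversal and hence requiring no separate argument.

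For each remaining case I would begin by writing down the attaching region $A_{k}$ explicitly from the local chart in Definition \ref{def:localcharts}: $A_{k}$ is the unstable part of $\partial\mathcal{H}^{\mathcal{P}}_{\eta}$, so for a regular saddle it is a pair of disjoint arcs, for a cone saddle a figure-eight with a single branch point at the cone vertex, for a Whitney $s_{s}$ a branched arc inherited from the cross-cap identification, and for $ss_{s}$ and $ssa$ branched 1-manifolds carrying two branch points coming from the intersection strata $\mathbf{M}(\mathcal{D})$ and $\mathbf{M}(\mathcal{T})$. This is a direct computation and fixes the branch-point count and the number of connected components of $A_{k}$, which by minimality must equal those of $N^{-}$.

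Next I would enumerate, up to homeomorphism of $N^{-}$ and ambient isotopy of $f$, all embeddings $f:A_{k}\hookrightarrow N^{-}\times\{1\}$ where $N^{-}$ is a distinguished branched 1-manifold with the same branch-point count as $A_{k}$ and at most $4$ connected components (as dictated by the construction recipe). The connectedness requirement from Step 3 of the construction forces every component of $N^{-}\times\{1\}$ to receive at least one component of $A_{k}$, which drastically limits the admissible component counts. For each surviving embedding I would perform Step 4, stretch $N$, and read off both $N^{-}$ and the entering set $N^{+}$ from the resulting flow, assembling the Lyapunov semi-graph by collapsing level components. The numerical weights are the first Betti numbers of these components and are computed directly from how $A_{k}$ sits inside $N^{-}\times\{1\}$. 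Checking that the resulting list matches Table \ref{tab:bordos-colecao} row by row completes the forward direction; the reverse direction (every listed semi-graph is realized) is witnessed by the explicit embedding that produced it.

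The main obstacle is the combinatorial explosion in the $\mathcal{D}$-case of nature $ss_{s}$ and the $\mathcal{T}$-case of nature $ssa$. In these cases $A_{k}$ carries two branch points and up to three components, and the candidate distinguished branched 1-manifolds $N^{-}$ with matching branch count come in several topological types (disjoint unions of circles and branched arcs with one or two branch points each). One must carefully rule out the embeddings that either disconnect the resulting $N$, create extra branch points on $N^{-}$, or fail to be realized by an actual flow transversality at the boundary; the delicate step is showing that the remaining embeddings are precisely those whose entering sets force the symmetry conditions $b_{1}^{+}=b_{2}^{+}$ (resp.\ $b_{1}^{-}=b_{2}^{-}$) appearing in Theorem \ref{teo:realizationminimalsemi}(b)–(c), because those equalities reflect the involutive identification of the two sheets at a double crossing or two of the three sheets at a triple crossing. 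Once this symmetry is pinned down, the rest of the classification reduces to bookkeeping.
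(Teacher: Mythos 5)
Your overall strategy (dispose of the attracting/repelling cases via Proposition \ref{proposicao}, then enumerate embeddings of the attaching region $A_{k}$ into candidate distinguished branched $1$-manifolds $N^{-}$ with matching branch-point count and read off $N^{+}$) is the right skeleton, but your computation of the attaching regions — the step everything else depends on — is wrong in three of the cases. The attaching region is the \emph{unstable} part of $\partial\mathcal{H}^{\mathcal{P}}_{\eta}$, so branch points appear on $A_{k}$ only where a fold (a double-crossing curve) exits the handle, i.e.\ only for identifications along \emph{unstable} orbits. For $\mathcal{H}^{\mathcal{C}}_{s}$ the cone vertex is the singularity $p$ itself, in the interior of the handle, not on the boundary: $A_{k}$ is two unbranched components, one on each sheet, and $N^{-}$ is one or two circles — not the figure-eight you describe. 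For $\mathcal{H}^{\mathcal{W}}_{s_{s}}$ and $\mathcal{H}^{\mathcal{D}}_{ss_{s}}$ the identifications are along \emph{stable} orbits, so the folds flow into $p$ and pierce the entering set $N^{+}$; $A_{k}$ carries no branch points at all (two, respectively four, unbranched components), and the figure-eight and bouquet boundaries of Table \ref{tab:bordos-colecao} occur on $N^{+}$, not on $N^{-}$. Your count is correct only for $\mathcal{T}_{ssa}$, where exactly two of the six folds have $p$ as $\alpha$-limit. Starting from the wrong $A_{k}$ you would enumerate the wrong candidate list for $N^{-}$ and the classification would not match the table.

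You also miss the structural device the paper uses to make the enumeration of $N^{+}$ tractable: the handles of types $\mathcal{W}$, $\mathcal{D}$ and $\mathcal{T}$ are decomposed as regular handles ($\mathcal{H}^{\mathcal{R}}_{s}$, $\mathcal{H}^{\mathcal{R}}_{a}$, or previously built $\mathcal{D}$-blocks) with pairs of stable or unstable orbits identified, and the minimal block is obtained by first building minimal blocks for the constituents and then performing the identifications; the possible $N^{+}$'s are then read off from how the identified orbits meet the constituent boundaries. Without either this decomposition or a corrected direct computation of $A_{k}$, the ``bookkeeping'' you defer to cannot be carried out.
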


\begin{table}[htb]
    \centering
    \begin{tikzpicture}

    \node {
            \resizebox{\linewidth}{!}{
            \begin{tabular}{ccccc}
                \cellcolor{gray!20} $p \in \mathcal{H}^{\mathcal{R}}_{\eta}$ & \cellcolor{gray!20} & \cellcolor{gray!20} & \cellcolor{gray!20} $p \in \mathcal{H}^{\mathcal{C}}_{\eta}$ & \cellcolor{gray!20} \\
                \hline
			    & & & & \\
			    \begin{tikzcd}
                \begin{overpic}[scale=0.3,align=c]{ra.png}
		        \put(32,88){\includegraphics[scale=0.2,align=c]{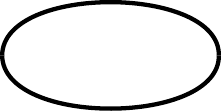}}
	            \end{overpic}
                \end{tikzcd} \begin{tikzcd}
                \begin{overpic}[scale=0.3,align=c]{rs1.png}
		        \put(36,86){\includegraphics[scale=0.2,align=c]{Branched1.pdf}}
		        \put(36,-9){\includegraphics[scale=0.2,align=c]{Branched1.pdf}}
	            \end{overpic}
                \end{tikzcd} \begin{tikzcd}
                \begin{overpic}[scale=0.3,align=c]{rs2.png}
		        \put(34,86){\includegraphics[scale=0.2,align=c]{Branched1.pdf}}
		        \put(66,-9){\includegraphics[scale=0.2,align=c]{Branched1.pdf}}
		        \put(3,-9){\includegraphics[scale=0.2,align=c]{Branched1.pdf}}
	            \end{overpic}
                \end{tikzcd} & & & \begin{tikzcd}
                \begin{overpic}[scale=0.3,align=c]{ca.png}
		        \put(6,88){\includegraphics[scale=0.2,align=c]{Branched1.pdf}}
		        \put(62,88){\includegraphics[scale=0.2,align=c]{Branched1.pdf}}
	            \end{overpic}
                \end{tikzcd} \begin{tikzcd}
                \begin{overpic}[scale=0.3,align=c]{cs1.png}
		        \put(34,86){\includegraphics[scale=0.2,align=c]{Branched1.pdf}}
		        \put(34,-11){\includegraphics[scale=0.2,align=c]{Branched1.pdf}}
	            \end{overpic}
                \end{tikzcd} \begin{tikzcd}
                \begin{overpic}[scale=0.3,align=c]{cs2.png}
		        \put(3,86){\includegraphics[scale=0.2,align=c]{Branched1.pdf}}
		        \put(66,86){\includegraphics[scale=0.2,align=c]{Branched1.pdf}}
		        \put(3,-11){\includegraphics[scale=0.2,align=c]{Branched1.pdf}}
		        \put(66,-11){\includegraphics[scale=0.2,align=c]{Branched1.pdf}}
	            \end{overpic}
                \end{tikzcd} & \\
                & & & & \\
                \cellcolor{gray!20} $p \in \mathcal{H}^{\mathcal{W}}_{\eta}$ & \cellcolor{gray!20} & \cellcolor{gray!20} & \cellcolor{gray!20} $p \in \mathcal{H}^{\mathcal{D}}_{\eta}$ & \cellcolor{gray!20} \\
                \hline
			    & & & & \\
			    \begin{tikzcd}
                \begin{overpic}[scale=0.3,align=c]{wa.png}
		        \put(26,88){\includegraphics[scale=0.2,align=c]{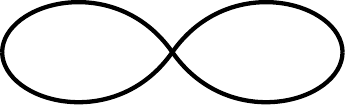}}
	            \end{overpic}
                \end{tikzcd} \begin{tikzcd}
                \begin{overpic}[scale=0.3,align=c]{wss1.png}
		        \put(26,86){\includegraphics[scale=0.2,align=c]{Branched2.pdf}}
		        \put(36,-9){\includegraphics[scale=0.2,align=c]{Branched1.pdf}}
	            \end{overpic}
                \end{tikzcd} \begin{tikzcd}
                \begin{overpic}[scale=0.3,align=c]{wss2.png}
		        \put(26,86){\includegraphics[scale=0.2,align=c]{Branched2.pdf}}
		        \put(66,-9){\includegraphics[scale=0.2,align=c]{Branched1.pdf}}
		        \put(3,-9){\includegraphics[scale=0.2,align=c]{Branched1.pdf}}
	            \end{overpic}
                \end{tikzcd} & & & \begin{tikzcd}
                \begin{overpic}[scale=0.3,align=c]{da.png}
		        \put(25,88){\includegraphics[scale=0.2,align=c]{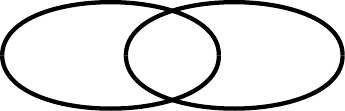}}
	            \end{overpic}
                \end{tikzcd} \begin{tikzcd}
                \begin{overpic}[scale=0.3,align=c]{dsa1.png}
                \put(-15,86){(}
		        \put(-5,86){\includegraphics[scale=0.2,align=c]{Branched3a.pdf}}
		        \put(48,86){,}
		        \put(58,86){\includegraphics[scale=0.2,align=c]{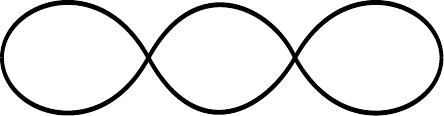}}
		        \put(125,86){)}
		        \put(36,-11){\includegraphics[scale=0.2,align=c]{Branched1.pdf}}
	            \end{overpic}
                \end{tikzcd} \hspace{0.4cm} \begin{tikzcd}
                \begin{overpic}[scale=0.3,align=c]{dsa2.png}
		        \put(27,86){\includegraphics[scale=0.2,align=c]{Branched3a.pdf}}
		        \put(3,-11){\includegraphics[scale=0.2,align=c]{Branched1.pdf}}
		        \put(66,-11){\includegraphics[scale=0.2,align=c]{Branched1.pdf}}
	            \end{overpic}
                \end{tikzcd} & \\
                & & & & \\
                \cellcolor{gray!20} & \cellcolor{gray!20} & \cellcolor{gray!20} $p \in \mathcal{H}^{\mathcal{D}}_{\eta}$ & \cellcolor{gray!20} & \cellcolor{gray!20} \\
                \hline
			    & & & & \\
			    \begin{tikzcd}
                \begin{overpic}[scale=0.3,align=c]{dsss1.png}
                \put(-15,86){(}
		        \put(-5,86){\includegraphics[scale=0.2,align=c]{Branched3a.pdf}}
		        \put(48,86){,}
		        \put(58,86){\includegraphics[scale=0.2,align=c]{Branched3b.pdf}}
		        \put(125,86){)}
		        \put(34,-11){\includegraphics[scale=0.2,align=c]{Branched1.pdf}}
	            \end{overpic}
                \end{tikzcd} \hspace{0.6cm} \begin{tikzcd}
                \begin{overpic}[scale=0.3,align=c]{dsssCG.png}
		        \put(-6,86){\includegraphics[scale=0.2,align=c]{Branched2.pdf}}
		        \put(56,86){\includegraphics[scale=0.2,align=c]{Branched2.pdf}}
		        \put(33,-11){\includegraphics[scale=0.2,align=c]{Branched1.pdf}}
	            \end{overpic}
                \end{tikzcd} \hspace{0.4cm} \begin{tikzcd}
                \begin{overpic}[scale=0.3,align=c]{dsss3.png}
		        \put(-15,86){(}
		        \put(-5,86){\includegraphics[scale=0.2,align=c]{Branched3a.pdf}}
		        \put(48,86){,}
		        \put(58,86){\includegraphics[scale=0.2,align=c]{Branched3b.pdf}}
		        \put(125,86){)}
		        \put(66,-9){\includegraphics[scale=0.2,align=c]{Branched1.pdf}}
		        \put(3,-9){\includegraphics[scale=0.2,align=c]{Branched1.pdf}}
	            \end{overpic}
                \end{tikzcd} & & & \begin{tikzcd}
                \begin{overpic}[scale=0.3,align=c]{dsss4.png}
		        \put(-6,88){\includegraphics[scale=0.2,align=c]{Branched2.pdf}}
		        \put(62,88){\includegraphics[scale=0.2,align=c]{Branched2.pdf}}
		        \put(3,-9){\includegraphics[scale=0.2,align=c]{Branched1.pdf}}
		        \put(62,-9){\includegraphics[scale=0.2,align=c]{Branched1.pdf}}
	            \end{overpic}
                \end{tikzcd} \hspace{0.4cm} \begin{tikzcd}
                \begin{overpic}[scale=0.3,align=c]{dsss5.png}
		        \put(-15,86){(}
		        \put(-5,86){\includegraphics[scale=0.2,align=c]{Branched3a.pdf}}
		        \put(48,86){,}
		        \put(58,86){\includegraphics[scale=0.2,align=c]{Branched3b.pdf}}
		        \put(125,86){)}
		        \put(34,-11){\includegraphics[scale=0.2,align=c]{Branched1.pdf}}
		        \put(0,0){\includegraphics[scale=0.2,align=c]{Branched1.pdf}}
		        \put(68,0){\includegraphics[scale=0.2,align=c]{Branched1.pdf}}
	            \end{overpic}
                \end{tikzcd} \hspace{0.4cm} \begin{tikzcd}
                \begin{overpic}[scale=0.3,align=c]{dsss7.png}
		        \put(27,86){\includegraphics[scale=0.2,align=c]{Branched3a.pdf}}
		        \put(-15,5){\includegraphics[scale=0.2,align=c]{Branched1.pdf}}
		        \put(12,-11){\includegraphics[scale=0.2,align=c]{Branched1.pdf}}
		        \put(58,-11){\includegraphics[scale=0.2,align=c]{Branched1.pdf}}
		        \put(90,5){\includegraphics[scale=0.2,align=c]{Branched1.pdf}}
	            \end{overpic}
                \end{tikzcd} & \\
                & & & & \\
                \cellcolor{gray!20} & \cellcolor{gray!20} & \cellcolor{gray!20} $p \in \mathcal{H}^{\mathcal{T}}_{\eta}$ & \cellcolor{gray!20} & \cellcolor{gray!20} \\
                \hline
			    & & & & \\
			    & & & & \\
			    \begin{tikzcd}
                \begin{overpic}[scale=0.3,align=c]{ta.png}
		        \put(26,95){\includegraphics[scale=0.25,align=c]{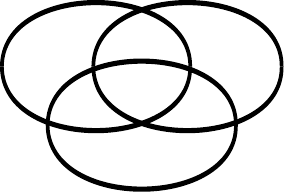}}
	            \end{overpic}
                \end{tikzcd} \hspace{0.5cm} \begin{tikzcd}
                \begin{overpic}[scale=0.3,align=c]{tssa2.png}
		        \put(-32,95){(}
		        \put(-18,95){\includegraphics[scale=0.13,align=c]{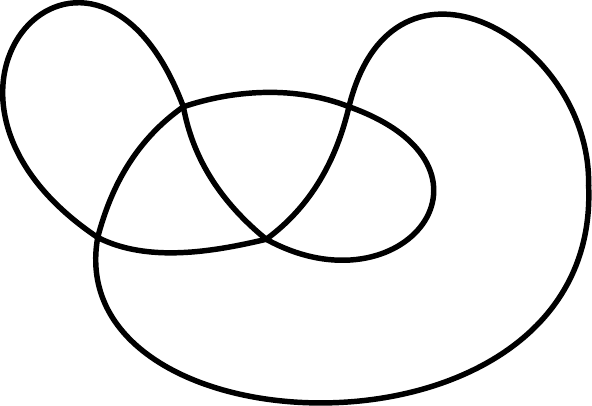}}
		        \put(45,95){,}
		        \put(61,95){\includegraphics[scale=0.2,align=c]{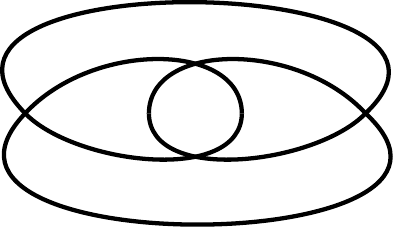}}
		        \put(124,95){)}
		        \put(-4,-11){\includegraphics[scale=0.2,align=c]{Branched2.pdf}}
		        \put(60,-11){\includegraphics[scale=0.2,align=c]{Branched2.pdf}}
	            \end{overpic}
                \end{tikzcd} 
                \hspace{0.5cm}
                \begin{tikzcd}
                \begin{overpic}[scale=0.3,align=c]{tssa1.png}
		        \put(28,100){\includegraphics[scale=0.22,align=c]{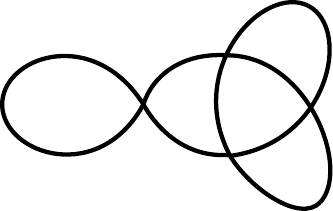}}
		        \put(25,-11){\includegraphics[scale=0.2,align=c]{Branched3b.pdf}}
	            \end{overpic}
                \end{tikzcd} & &  & \hspace{0.5cm} \begin{tikzcd}
                \begin{overpic}[scale=0.3,align=c]{tssa1.png}
		        \put(28,100){\includegraphics[scale=0.2,align=c]{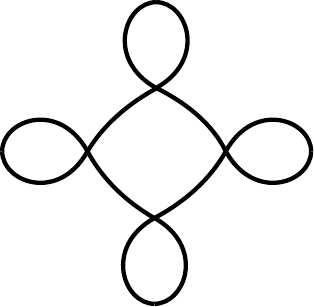}}
		        \put(28,-11){\includegraphics[scale=0.2,align=c]{Branched3a.pdf}}
	            \end{overpic}
                \end{tikzcd} \hspace{2cm} \begin{tikzcd}
                \begin{overpic}[scale=0.3,align=c]{tssa1.png}
		        \put(-63,100){(}
		        \put(-49,100){\includegraphics[scale=0.2,align=c]{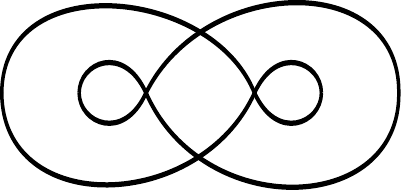}}
		        \put(12,100){,}

		        \put(26,100){\includegraphics[scale=0.13,align=c]{B5T2.pdf}}
		        \put(87,100){,}
		        \put(102,100){\includegraphics[scale=0.2,align=c]{Branched5e.pdf}}
		        \put(163,100){)}
		        \put(-15,-11){(}
		        \put(-5,-11){\includegraphics[scale=0.2,align=c]{Branched3a.pdf}}
		        \put(48,-11){,}
		        \put(58,-11){\includegraphics[scale=0.2,align=c]{Branched3b.pdf}}
		        \put(125,-11){)}
	            \end{overpic}
                \end{tikzcd} \hspace{1cm} & \\
                \multicolumn{5}{c}{} \\
                \hline
                \multicolumn{5}{c}{} \\
                \end{tabular}}
        };
    \end{tikzpicture}
    \caption{Boundary $N^{-}$ and $N^{+}$ of minimal GS isolating blocks  according to the associated  Lyapunov semi-graph.}
    \label{tab:bordos-colecao}
\end{table}

In Table  \ref{tab:bordos-colecao}, the semi-graphs that have edges labelled with more than one distinguished branched $1$-manifold indicate that any combination of choices represents the boundary components of some minimal GS isolating block corresponding to the given semi-graph.

Note that  whenever the singularity $p \in \mathcal{H}^{\mathcal{P}}_{\eta}$ is the $\omega$-limit of all the folds in $\mathcal{H}^{\mathcal{P}}_{\eta}$, the choice  that minimizes the branched charts in $N^{-}$ are circles. In the case $p \in \mathcal{H}^{\mathcal{T}}_{ssa}$, there are six folds in $\mathcal{H}^{\mathcal{T}}_{ssa}$, but only four of them have $p$ as their $\omega$-limit. Hence, the minimal number of branched charts in $N^-$ is two and thus, there are three possible choices for $N^{-}$ as shown in Table  \ref{tab:bordos-colecao}.

In the proof of  Theorem  \ref{teo:colecao}, the construction of a  minimal GS isolating block $N$ is obtained by considering all possibilities   of  distinguished branched $1$-manifolds  that are admissible as exiting sets, as well as, the gluing maps of a GS handle.  
Note that each choice determines a block $N$ with a  distinguished branched $1$-manifold $N^+$ as its entering set.   Theorem  \ref{teo:colecao} provides a complete classification of $(N,N^-,N^+)$.

\begin{proof}

Let  $p \in \mathcal{H}^{\mathcal{P}}_{a}$ (resp., $p \in \mathcal{H}^{\mathcal{P}}_{r}$), the unique minimal isolating block was obtained  in  Proposition \ref{proposicao}.

We now analyze GS singularities   with different saddle natures and of type  $\mathcal{R}, \mathcal{C}, \mathcal{W}, \mathcal{D}$ and $\mathcal{T}$.

\begin{itemize}
    \item[i) ] $p \in    \mathcal{H}^{\mathcal{R}}_{s}$
    
This case  is  well known  in  classical Handle Theory as the \textit{pair of pants}. The possible choices for $N^-$ are one or two circles.  This corresponds to  a semi-graph with $e_v^+ + e_v^- =3$, where $v$ is labelled with $(0,1,0)_{\mathcal{R}}$.  In the case of a semi-graph  with $e_v^+ + e_v^- =2$,  where $v$ is labelled with $(0,1,0)_{\mathcal{R}}$, the corresponding  isolating block $N$ is obtained by gluing a twisted handle on the circle $N^-$. The block $N$   is homotopy equivalent to the nonorientable block formed by a Mobius band minus a disc.

    \item[ii) ] $p \in   \mathcal{H}^{\mathcal{C}}_{s} $
    
     \begin{table}[!htb]
		\centering
		\resizebox{0.65\linewidth}{!}{
		\begin{tabular}{ccccccccc}
			 \cellcolor{gray!20} & \cellcolor{gray!20} Pre quotient & \cellcolor{gray!20} & \cellcolor{gray!20} & \cellcolor{gray!20} Local chart & \cellcolor{gray!20} & \cellcolor{gray!20} & \cellcolor{gray!20} Attaching region & \cellcolor{gray!20} \\
			 %& & & & & & & & & & & \\
			 \hline
			 & & & & & & & & \\
			 & \includegraphics[align=c,scale=0.6]{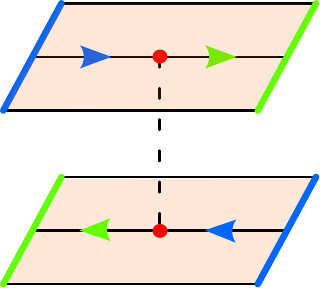} &  &  & \includegraphics[align=c,scale=0.6]{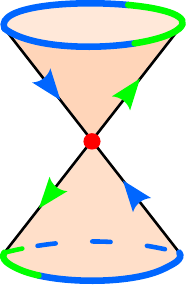} & & & \includegraphics[align=c,scale=0.6]{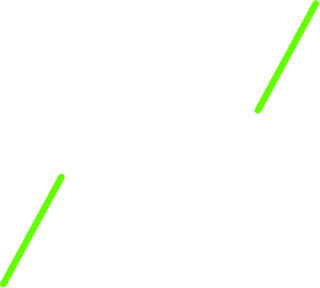}   & \\
			 &  &  &  &  &  & & & \\
			 %\hline
		\end{tabular}}
		%\mbox{ \ \ \ \ í­ndice do Ponto Regular}
		\caption{Representation of a cone handle $\mathcal{H}^{\mathcal{C}}_{s}$}
		\label{tabela:Hcone}
	\end{table}

		A cone handle $\mathcal{H}^{\mathcal{C}}_{s}$ corresponds to the gluing of two discs, each of which has a  tubular flow with a degenerate singularity at its center. Once the two centers are identified, each disc  corresponds to the upper and lower sheets of the cone.    The attaching region of the handle has  two connected components with no branched charts, one on each sheet of the cone, 
		see Table \ref{tabela:Hcone}.   Since there are no branched charts,  $N^-$ is either one or two circles. Hence, the two connected components which make up the attaching region can be glued to one circle, see left side of Table \ref{NEWconeblocks}  or glued to two circles, see right side of Table \ref{NEWconeblocks}.

	\begin{table}[!htb]
			\centering
			\resizebox{\linewidth}{!}{
			\begin{tabular}{ccccc|ccccc}
                \cellcolor{gray!20} Minimal isolating block & \cellcolor{gray!20} & \cellcolor{gray!20} & \cellcolor{gray!20} Lyapunov graph & \cellcolor{gray!20} & \cellcolor{gray!20} & \cellcolor{gray!20} Minimal isolating block & \cellcolor{gray!20} & \cellcolor{gray!20} & \cellcolor{gray!20} Lyapunov graph \\
                \hline
			    & & & & & & & & & \\
                \begin{overpic}[align=c,scale=0.28]{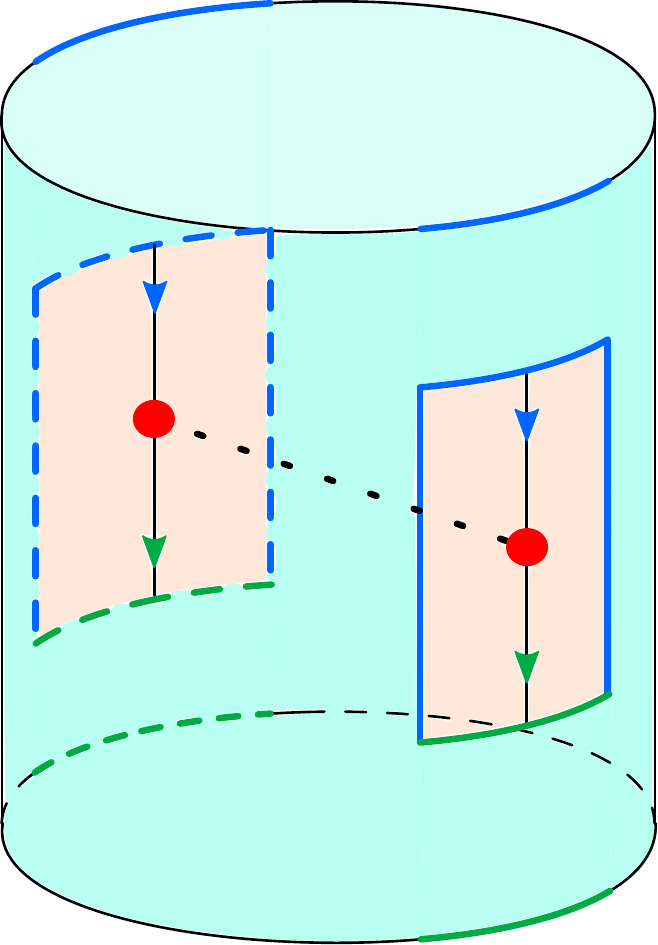}
                \put(78,45){$\rightarrow$}
                \end{overpic} \hspace{0.7cm} \begin{overpic}[align=c,scale=0.28]{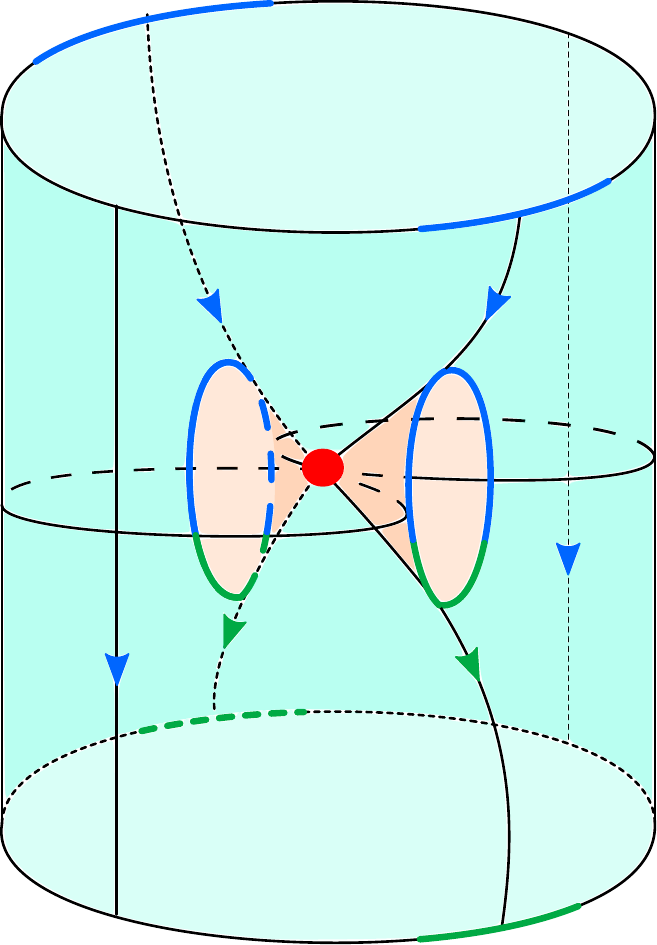}\end{overpic} & & & \begin{overpic}[align=c,scale=0.52]{cs1.png}
                \put(35,88){\includegraphics[scale=0.3,align=c]{Branched1.pdf}}
                \put(35,-8){\includegraphics[scale=0.3,align=c]{Branched1.pdf}}
                \end{overpic} & & & \begin{overpic}[align=c,scale=0.28]{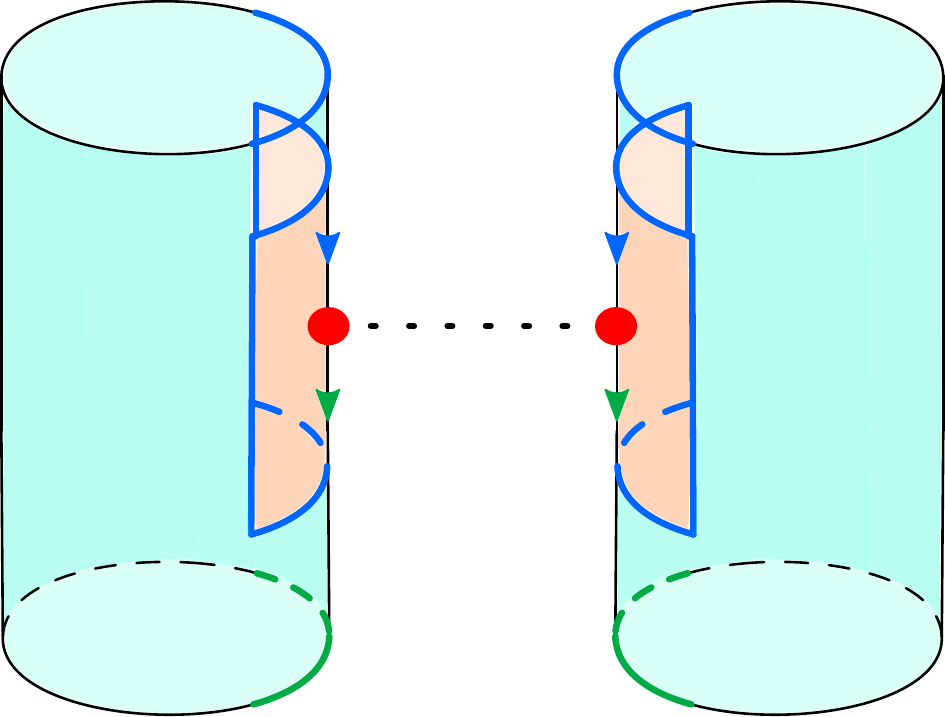}
                \put(110,38){$\rightarrow$}
                \end{overpic} \hspace{0.7cm} \begin{overpic}[align=c,scale=0.28]{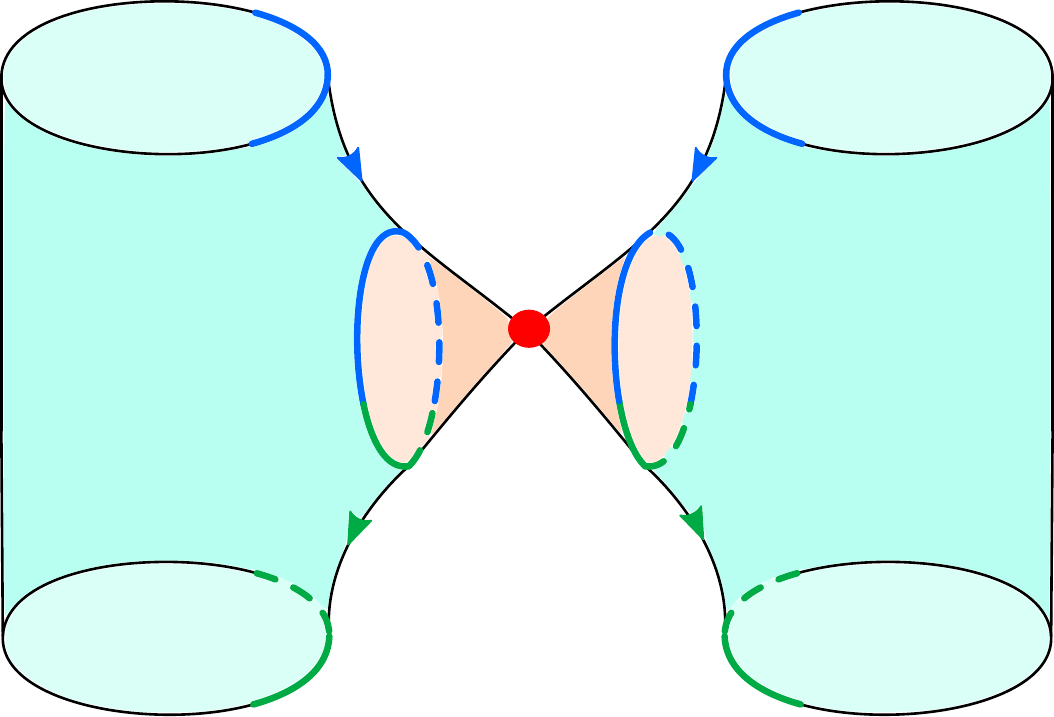}\end{overpic}  & & & \begin{overpic}[align=c,scale=0.52]{cs2.png}
                \put(65,88){\includegraphics[scale=0.3,align=c]{Branched1.pdf}}
                \put(5,88){\includegraphics[scale=0.3,align=c]{Branched1.pdf}}
                \put(5,-8){\includegraphics[scale=0.3,align=c]{Branched1.pdf}}
                \put(65,-8){\includegraphics[scale=0.3,align=c]{Branched1.pdf}}
                \end{overpic} \\
                & & & & & & & & & \\
                & & & & & & & & & \\
            \end{tabular}}
			\caption{Minimal isolating blocks for a singularity  $p \in \mathcal{H}^{\mathcal{C}}_{s}$}
			\label{NEWconeblocks}
		\end{table}

	\item[iii) ] $p \in \mathcal{H}^{\mathcal{W}}_{s_{s}}$
    
    \begin{table}[!htb]
		\centering
		\resizebox{0.65\linewidth}{!}{
		\begin{tabular}{ccccccccc}
			 \cellcolor{gray!20} & \cellcolor{gray!20} Pre quotient & \cellcolor{gray!20} & \cellcolor{gray!20} & \cellcolor{gray!20} Local chart & \cellcolor{gray!20} & \cellcolor{gray!20} & \cellcolor{gray!20} Attaching region & \cellcolor{gray!20} \\
			 %& & & & & & & & & & & \\
			 \hline
			 & & & & & & & & \\
			 & \includegraphics[align=c,scale=0.5]{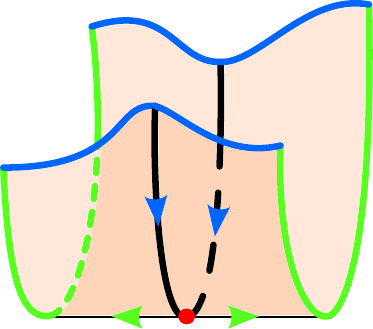} &  &  & \includegraphics[align=c,scale=0.5]{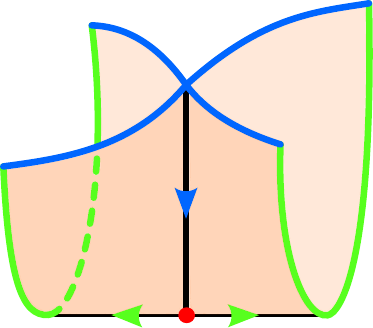} & & & \includegraphics[align=c,scale=0.5]{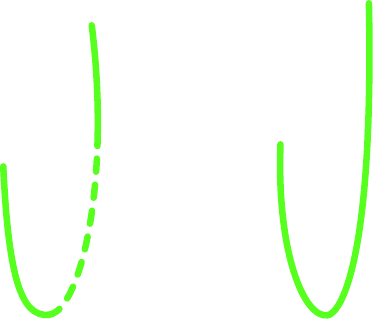} &   \\
			 &  &  &  &  & & & &  \\
		     %\hline
		\end{tabular}}
		%\mbox{ \ \ \ \ í­ndice do Ponto Regular}
		\caption{Representation of a  Whitney handle $\mathcal{H}^{\mathcal{W}}_{s_{s}}$}
		\label{tabela:Hwhitney}
	\end{table}

A Whitney handle $\mathcal{H}^{\mathcal{W}}_{s_{s}}$  corresponds to a regular handle $\mathcal{H}^{\mathcal{R}}_{s}$ followed by  the identification of the two stable orbits, see Table \ref{tabela:Hwhitney}.  Also in this case, the attaching region has no branched point  and hence,   $N^-$ is either one or two circles. 
Thus the two connected components which make up the attaching region can be glued to one circle, see left side of Table \ref{NEWwhitneyblocks}  or glued to two circles, see right side of Table \ref{NEWwhitneyblocks}.

	\begin{table}[htb]
			\centering
			\resizebox{\linewidth}{!}{
			\begin{tabular}{ccccc|ccccc}
                \cellcolor{gray!20} Minimal isolating block & \cellcolor{gray!20} & \cellcolor{gray!20} & \cellcolor{gray!20} Lyapunov graph & \cellcolor{gray!20} & \cellcolor{gray!20} & \cellcolor{gray!20} Minimal isolating block & \cellcolor{gray!20} & \cellcolor{gray!20} & \cellcolor{gray!20} Lyapunov graph \\
                \hline
			    & & & & & & & & & \\
                \begin{overpic}[align=c,scale=0.42]{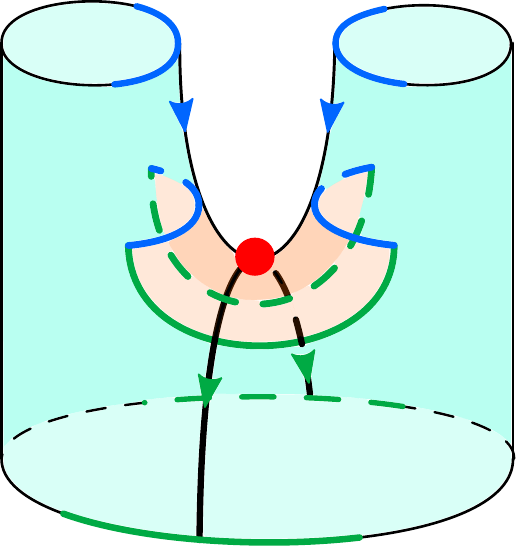}
                \put(105,45){$\rightarrow$}
                \end{overpic} \hspace{0.7cm} \begin{overpic}[align=c,scale=0.42]{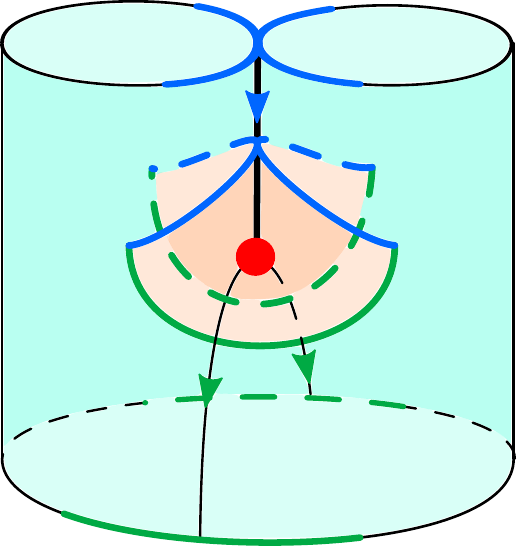}\end{overpic} & & &  \begin{overpic}[align=c,scale=0.5]{wss1.png} \put(30,85){\includegraphics[scale=0.3,align=c]{Branched2.pdf}} \put(38,-5){\includegraphics[scale=0.3,align=c]{Branched1.pdf}}
                \end{overpic} & & & \begin{overpic}[align=c,scale=0.42]{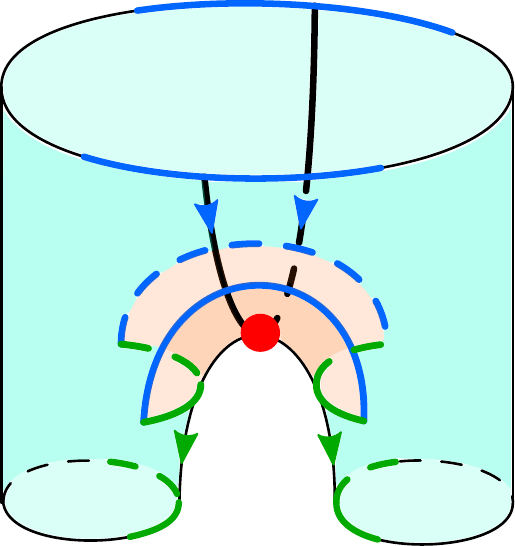}
                \put(105,45){$\rightarrow$}
                \end{overpic} \hspace{0.7cm} \begin{overpic}[align=c,scale=0.42]{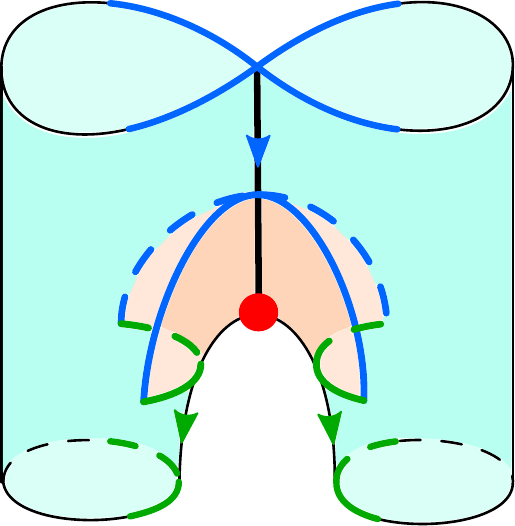}\end{overpic}  & & & \begin{overpic}[align=c,scale=0.52]{wss2.png}
                \put(30,85){\includegraphics[scale=0.3,align=c]{Branched2.pdf}}
                \put(5,-5){\includegraphics[scale=0.3,align=c]{Branched1.pdf}}
                \put(65,-5){\includegraphics[scale=0.3,align=c]{Branched1.pdf}}
                \end{overpic} \\
                
                & & & & & & & & & \\
            \end{tabular}}
			\caption{Minimal isolating blocks for a singularity  $p \in \mathcal{H}^{\mathcal{W}}_{s_{s}}$}
			\label{NEWwhitneyblocks}
		\end{table}

    \item[iv) ] $p \in \mathcal{H}^{\mathcal{D}}_{\eta}$

    \item[a) ]  A double handle $\mathcal{H}^{\mathcal{D}}_{sa}$ corresponds to two regular handles $\mathcal{H}^{\mathcal{R}}_{s}$ and  $\mathcal{H}^{\mathcal{R}}_{a}$ in which two pairs of stable orbits are identified, as shown in Table \ref{tabela:HDuplosa}.
Consequently, by considering the minimal  isolating blocks for the  regular  saddle and the attractor, and then identifying the respective stable pair of orbits, a minimal isolating block for a singularity of type $\mathcal{D}$ and nature $sa$ is produced. See Table 
     \ref{doubleSAblocks} for the case that the isolating blocks are orientable. For the nonorientable case, the schematic construction is shown in Table  \ref{doubleSAblocks2}.

    \begin{table}[!htb]
		\centering
		\resizebox{0.65\linewidth}{!}{
		\begin{tabular}{ccccccccc}
			 \cellcolor{gray!20} & \cellcolor{gray!20} Pre quotient & \cellcolor{gray!20} & \cellcolor{gray!20} & \cellcolor{gray!20} Local chart & \cellcolor{gray!20} & \cellcolor{gray!20} & \cellcolor{gray!20} Attaching region & \cellcolor{gray!20} \\
			 %& & & & & & & & & & & \\
			 \hline
			 & & & & & & & & \\
			 & \includegraphics[align=c,scale=0.7]{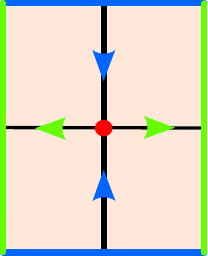} \hspace{0.5cm} \includegraphics[align=c,scale=0.7]{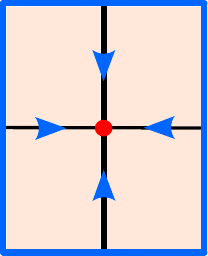} &  &  & \includegraphics[align=c,scale=0.6]{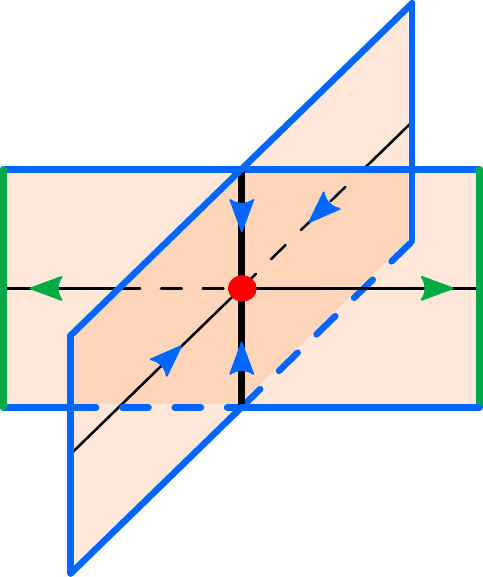} & & & \includegraphics[align=c,scale=0.7]{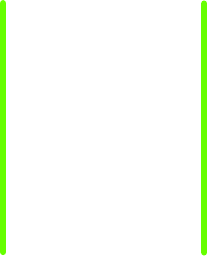} &   \\
			 &  &  &  &  & & & &  \\
		     %\hline
		\end{tabular}}
		%\mbox{ \ \ \ \ í­ndice do Ponto Regular}
		\caption{Representation of a double handle $\mathcal{H}^{\mathcal{D}}_{sa}$}
		\label{tabela:HDuplosa}
	\end{table}

\begin{table}[!htb]
			\centering
			\resizebox{0.95\linewidth}{!}{
			\begin{tabular}{cc|ccc}
                \cellcolor{gray!20} Minimal isolating block & \cellcolor{gray!20} & \cellcolor{gray!20} & \cellcolor{gray!20} Lyapunov graph & \cellcolor{gray!20}  \\
                \hline
			    & & & &  \\
                \begin{overpic}[align=c,scale=0.4]{Paints2.pdf}\end{overpic} \hspace{0.5cm} \begin{overpic}[align=c,scale=0.32]{blocoRA.pdf}
             \put(130,45){$\rightarrow$}
             \end{overpic} \hspace{1.3cm}  \begin{overpic}[align=c,scale=0.35]{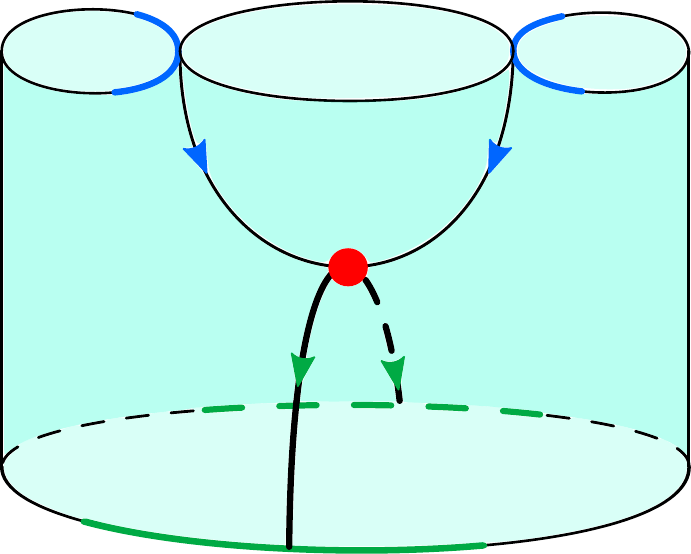}\end{overpic} & & & \begin{overpic}[align=c,scale=0.5]{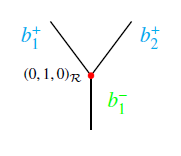}
             \put(5,85){\includegraphics[scale=0.3,align=c]{Branched1.pdf}}
             \put(62,85){\includegraphics[scale=0.3,align=c]{Branched1.pdf}}
             \put(35,-5){\includegraphics[scale=0.3,align=c]{Branched1.pdf}}
             \end{overpic} \begin{overpic}[align=c,scale=0.5]{ra.png}
             \put(33,88){\includegraphics[scale=0.3,align=c]{Branched1.pdf}}
             \put(105,40){$\rightarrow$}
             \end{overpic} \hspace{0.9cm} \begin{overpic}[align=c,scale=0.5]{dsa1.png}
             \put(25,85){\includegraphics[scale=0.3,align=c]{Branched3b.pdf}}
             \put(37,-5){\includegraphics[scale=0.3,align=c]{Branched1.pdf}}
             \end{overpic} & \\
                & & & & \\
                \hline
                & & & & \\
             \begin{overpic}[align=c,scale=0.4]{Paints.pdf}\end{overpic} \hspace{0.5cm} \begin{overpic}[align=c,scale=0.32]{blocoRA.pdf}
             \put(130,45){$\rightarrow$}
             \end{overpic} \hspace{1.3cm}  \begin{overpic}[align=c,scale=0.42]{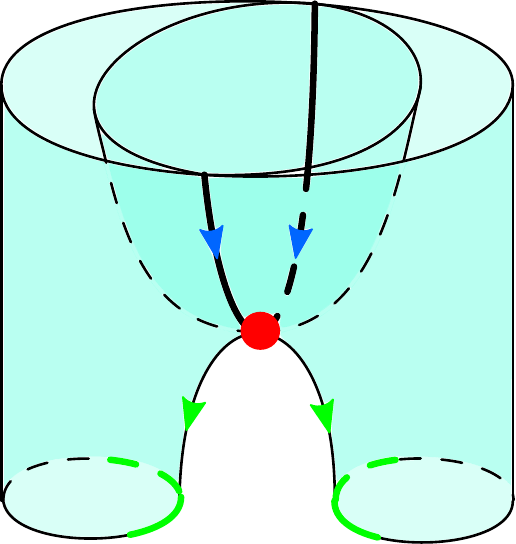}\end{overpic} & &   & \begin{overpic}[align=c,scale=0.5]{rs2.png}
             \put(35,88){\includegraphics[scale=0.3,align=c]{Branched1.pdf}}
             \put(2,-5){\includegraphics[scale=0.3,align=c]{Branched1.pdf}}
             \put(64,-5){\includegraphics[scale=0.3,align=c]{Branched1.pdf}}
             \end{overpic} \begin{overpic}[align=c,scale=0.5]{ra.png}
             \put(33,88){\includegraphics[scale=0.3,align=c]{Branched1.pdf}}
             \put(105,40){$\rightarrow$}
             \end{overpic} \hspace{0.9cm} \begin{overpic}[align=c,scale=0.5]{dsa2.png}
             \put(29,88){\includegraphics[scale=0.3,align=c]{Branched3a.pdf}}
             \put(2,-5){\includegraphics[scale=0.3,align=c]{Branched1.pdf}}
             \put(64,-5){\includegraphics[scale=0.3,align=c]{Branched1.pdf}}
             \end{overpic} & \\
                & & & & \\
                & & & & \\
            \end{tabular}}
			\caption{Minimal isolating blocks for a singularity  $p \in \mathcal{H}^{\mathcal{D}}_{sa}$}
			\label{doubleSAblocks}
	\end{table}

\end{itemize}

\begin{table}[!htb]
			\centering
			\resizebox{0.45\linewidth}{!}{
			\begin{tabular}{c}
			   \cellcolor{gray!20} Lyapunov graph moves \\
                \hline
			    \\
			    \\
			    \begin{overpic}[scale=0.5,align=c]{rs1.png}
		        \put(37,86){\includegraphics[scale=0.3,align=c]{Branched1.pdf}}
		        \put(37,-9){\includegraphics[scale=0.3,align=c]{Branched1.pdf}}
	            \end{overpic} \begin{overpic}[align=c,scale=0.5]{ra.png}
             \put(33,88){\includegraphics[scale=0.3,align=c]{Branched1.pdf}}
             \put(105,40){$\rightarrow$}
             \end{overpic} \hspace{0.9cm} \begin{overpic}[align=c,scale=0.5]{dsa1.png}
             \put(28,85){\includegraphics[scale=0.3,align=c]{Branched3a.pdf}}
             \put(37,-5){\includegraphics[scale=0.3,align=c]{Branched1.pdf}}
             \end{overpic} \\
                 \\
             \end{tabular}}
			\caption{Minimal isolating blocks for a singularity $p \in \mathcal{H}^{\mathcal{D}}_{sa}$}
			\label{doubleSAblocks2}
\end{table}

\begin{itemize}

	\item[b) ] Analogously, a double handle $\mathcal{H}^{\mathcal{D}}_{ss_{s}}$ 
	 corresponds to two regular handles $\mathcal{H}^{\mathcal{R}}_{s}$ in which two pairs of stable orbits are identified. The attaching region has four connected components with no branched charts, hence $N^-$ is either one, two, three or four circles.
	 	In this case, by analyzing the distinct possibilities of gluing the attaching region to the different cases of $N^-$, the two saddles prior to the identification are either  in one minimal isolating block or in two disjoint minimal isolating blocks.
	 	   The isolating block for the  singularity $p$ of nature $ss_s$ is produced by identifying two pairs of stable orbits of the two regular saddles. See Table \ref{doubleSSsblocks} for the case that the isolating blocks are orientable.

\end{itemize}

\begin{table}[!htb]
		\centering
		\resizebox{0.65\linewidth}{!}{
		\begin{tabular}{ccccccccc}
			 \cellcolor{gray!20} & \cellcolor{gray!20} Pre quotient & \cellcolor{gray!20} & \cellcolor{gray!20} & \cellcolor{gray!20} Local chart & \cellcolor{gray!20} & \cellcolor{gray!20} & \cellcolor{gray!20} Attaching region & \cellcolor{gray!20} \\
			 %& & & & & & & & & & & \\
			 \hline
			 & & & & & & & & \\
			 & \includegraphics[align=c,scale=0.7]{Plano1.pdf} \hspace{0.5cm} \includegraphics[align=c,scale=0.7]{Plano1.pdf} &  &  & \includegraphics[align=c,scale=0.6]{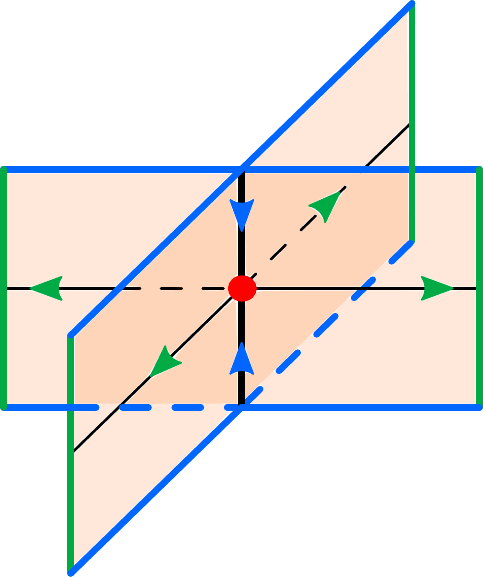} & & & \includegraphics[align=c,scale=0.5]{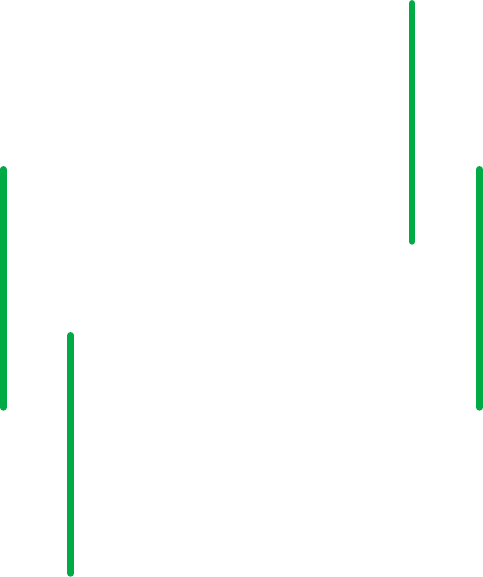} &   \\
			 &  &  &  &  & & & &  \\
		     %\hline
		\end{tabular}}
		%\mbox{ \ \ \ \ í­ndice do Ponto Regular}
		\caption{Representation of a double handle $\mathcal{H}^{\mathcal{D}}_{ss_{s}}$}
		\label{tabela:HDuploss}
	\end{table}

%%%%%%%%%%%%%%%%%%%%%%%%%%%%%%%%%%%%%%%%%%%%%%%%%%%%%%%%%%%%%%%
%%%%%%%%%%%%%%%%%%%%%%%%%%%%%%%%%%%%%%%%%%%%%%%%%%%%%%%%%%%%%%%
%%%%%%%%%%%%%%%%%%%%%%%%%%%%%%%%%%%%%%%%%%%%%%%%%%%%%%%%%%%%%%%

\begin{table}[htb]
			\centering
			\resizebox{0.95\linewidth}{!}{
			\begin{tabular}{ccc|cc}
                \cellcolor{gray!20} Minimal isolating block & \cellcolor{gray!20} & \cellcolor{gray!20} & \cellcolor{gray!20} Lyapunov graph moves & \cellcolor{gray!20}  \\
                \hline
                & & & &  \\
                \begin{overpic}[align=c,scale=0.38]{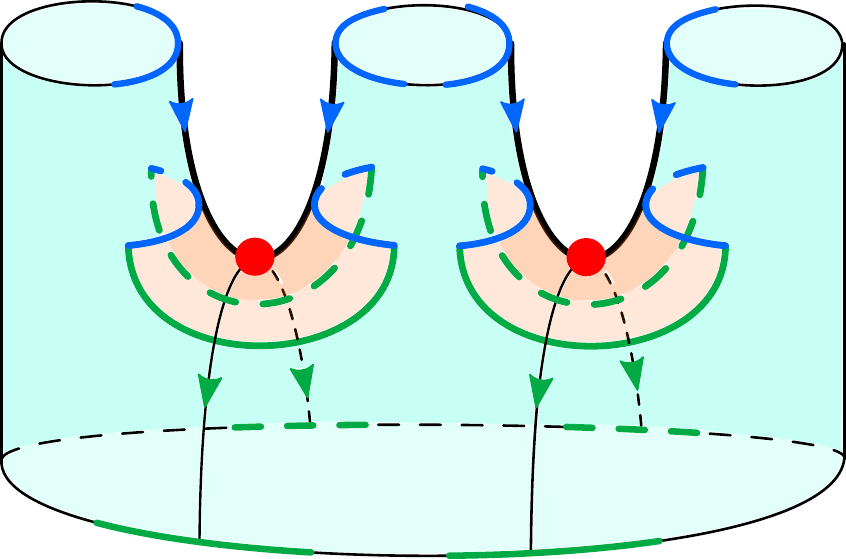}
                \put(116,32){$\rightarrow$}
                \end{overpic} \hspace{1.1cm}  \begin{overpic}[align=c,scale=0.22]{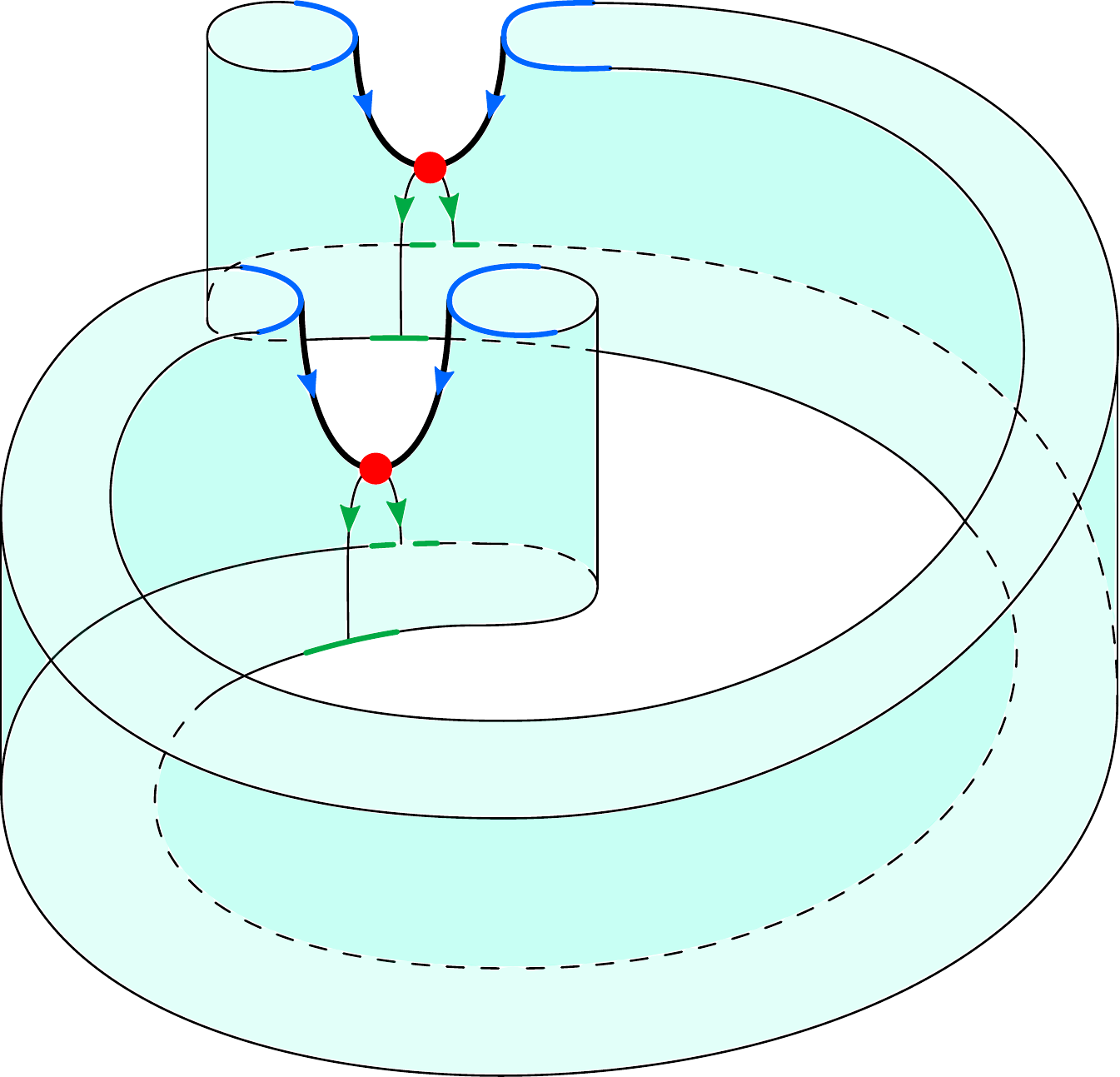}
                \put(117,34){$\rightarrow$}
                \end{overpic} \hspace{1.2cm} \begin{overpic}[align=c,scale=0.27]{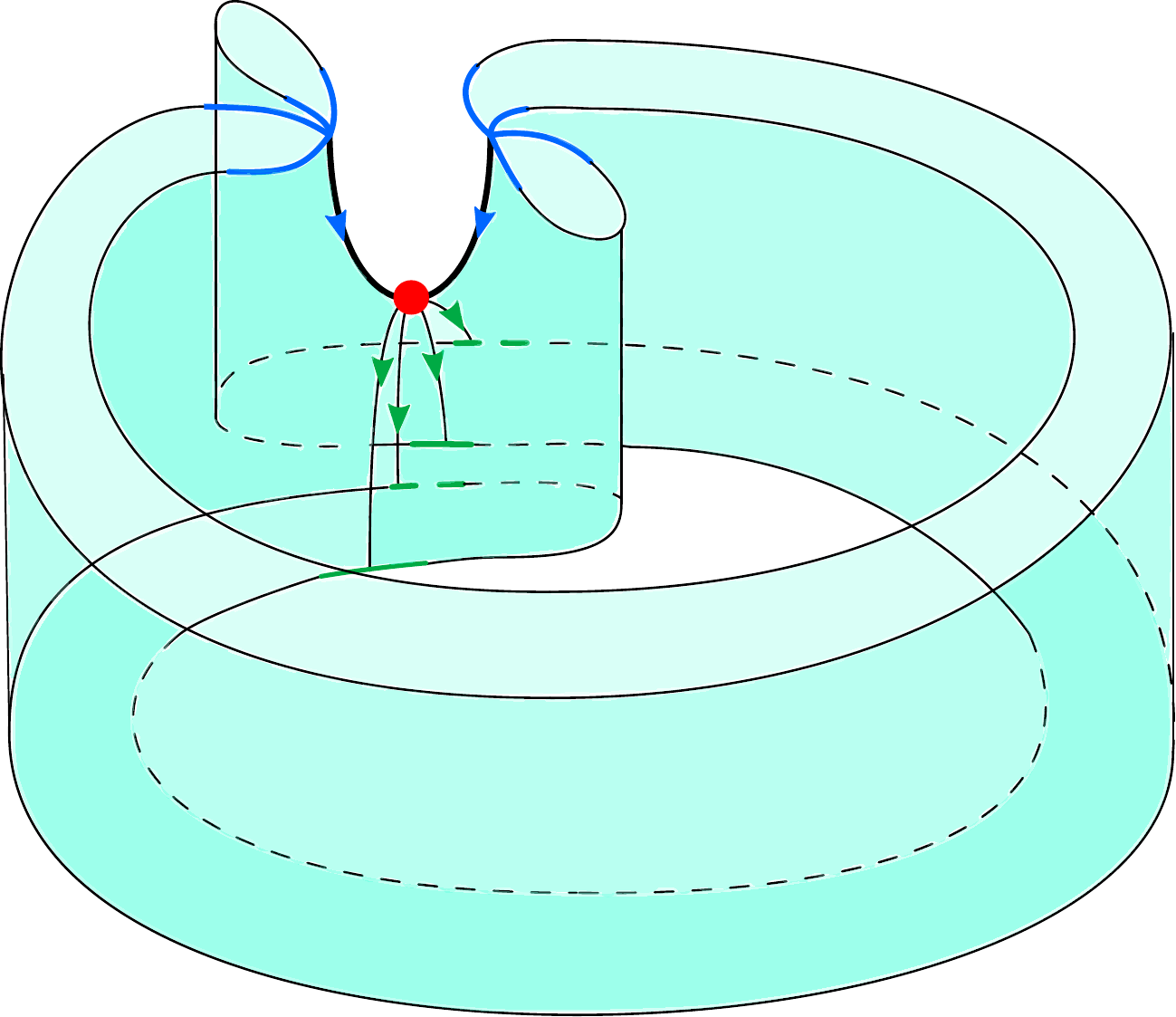}\end{overpic}  & & & \begin{overpic}[align=c,scale=0.5]{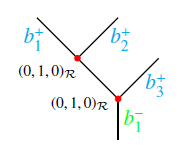}
                \put(2,85){\includegraphics[align=c,scale=0.3]{Branched1.pdf}}
                \put(56,85){\includegraphics[align=c,scale=0.3]{Branched1.pdf}}
                \put(80,62){\includegraphics[align=c,scale=0.3]{Branched1.pdf}}
                \put(50,-8){\includegraphics[align=c,scale=0.3]{Branched1.pdf}}
                \put(115,40){$\rightarrow$}
                \end{overpic} \hspace{0.9cm} \begin{overpic}[align=c,scale=0.5]{dsss1.png}
                \put(36,-8){\includegraphics[align=c,scale=0.3]{Branched1.pdf}}
                \put(25,86){\includegraphics[align=c,scale=0.3]{Branched3b.pdf}}
                \end{overpic} &  \\
                %& & & $\mathcal{B}^{+} = \mathcal{B}^{-} + 2$ &  \\
                & & & &  \\
                \hline
                & & & &  \\
                \begin{overpic}[align=c,scale=0.38]{DSSs1a.pdf}
                \put(116,32){$\rightarrow$}
                \end{overpic} \hspace{1.2cm}  \begin{overpic}[align=c,scale=0.3]{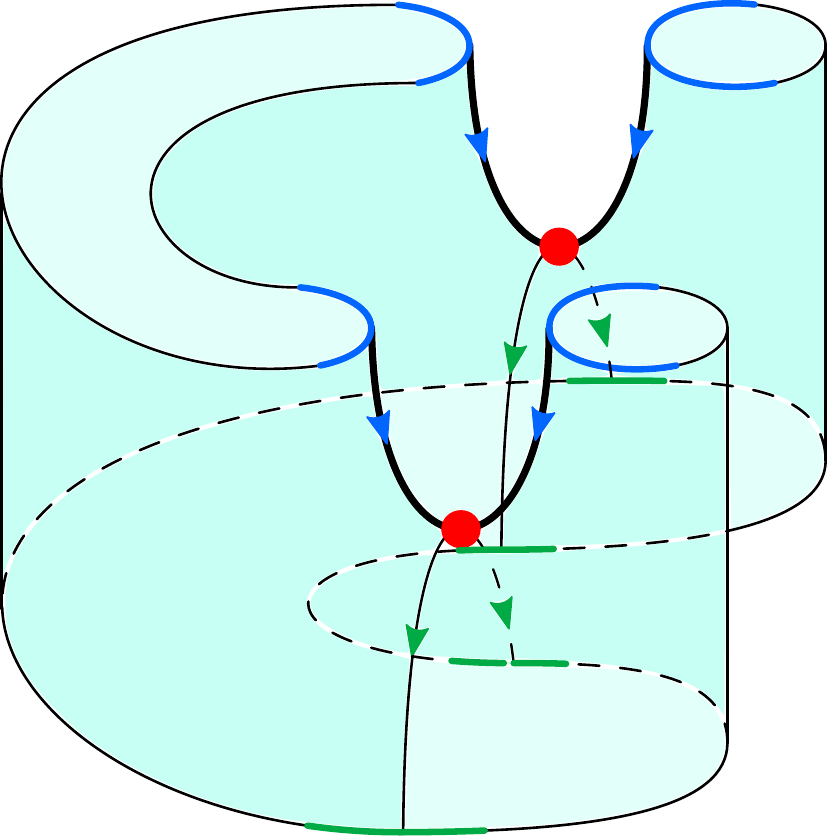}
                \put(120,38){$\rightarrow$}
                \end{overpic} \hspace{1.2cm} \begin{overpic}[align=c,scale=0.3]{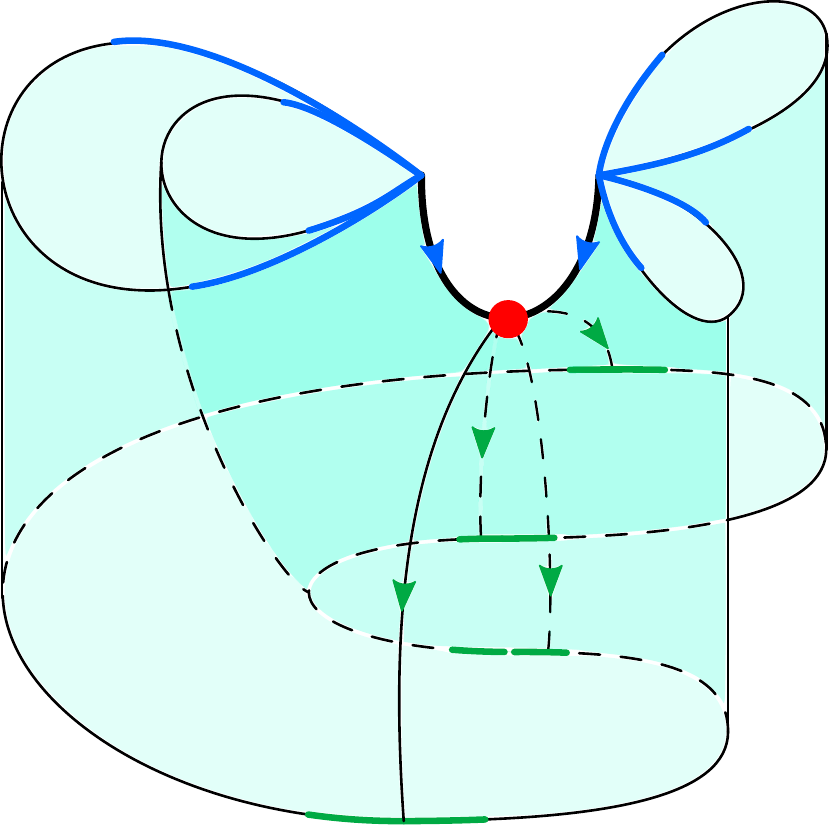}\end{overpic}  & & & \begin{overpic}[align=c,scale=0.5]{rss2.png}
                \put(2,85){\includegraphics[align=c,scale=0.3]{Branched1.pdf}}
                \put(56,85){\includegraphics[align=c,scale=0.3]{Branched1.pdf}}
                \put(80,62){\includegraphics[align=c,scale=0.3]{Branched1.pdf}}
                \put(50,-8){\includegraphics[align=c,scale=0.3]{Branched1.pdf}}
                \put(115,40){$\rightarrow$}
                \end{overpic} \hspace{0.9cm} \begin{overpic}[align=c,scale=0.5]{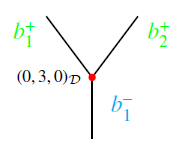}
                \put(2,86){\includegraphics[align=c,scale=0.3]{Branched2.pdf}}
                \put(60,86){\includegraphics[align=c,scale=0.3]{Branched2.pdf}}
                \put(38,-8){\includegraphics[align=c,scale=0.3]{Branched1.pdf}}
                \end{overpic} &  \\
                %& & & $\mathcal{B}^{+} = \mathcal{B}^{-} + 3$  &  \\
                & & & &  \\
                \hline
                & & & &  \\
                \begin{overpic}[align=c,scale=0.38]{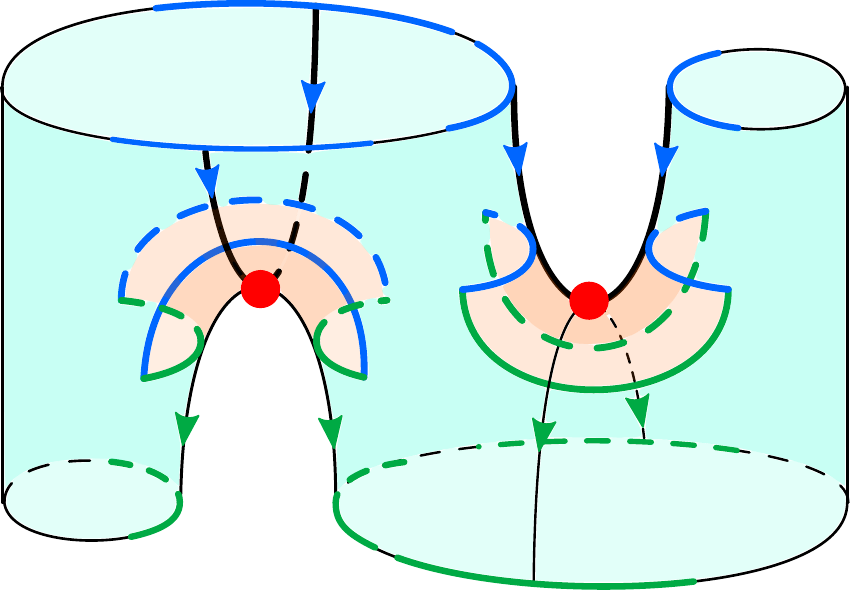}
                \put(115,35){$\rightarrow$}
                \end{overpic} \hspace{1.2cm} \begin{overpic}[align=c,scale=0.22]{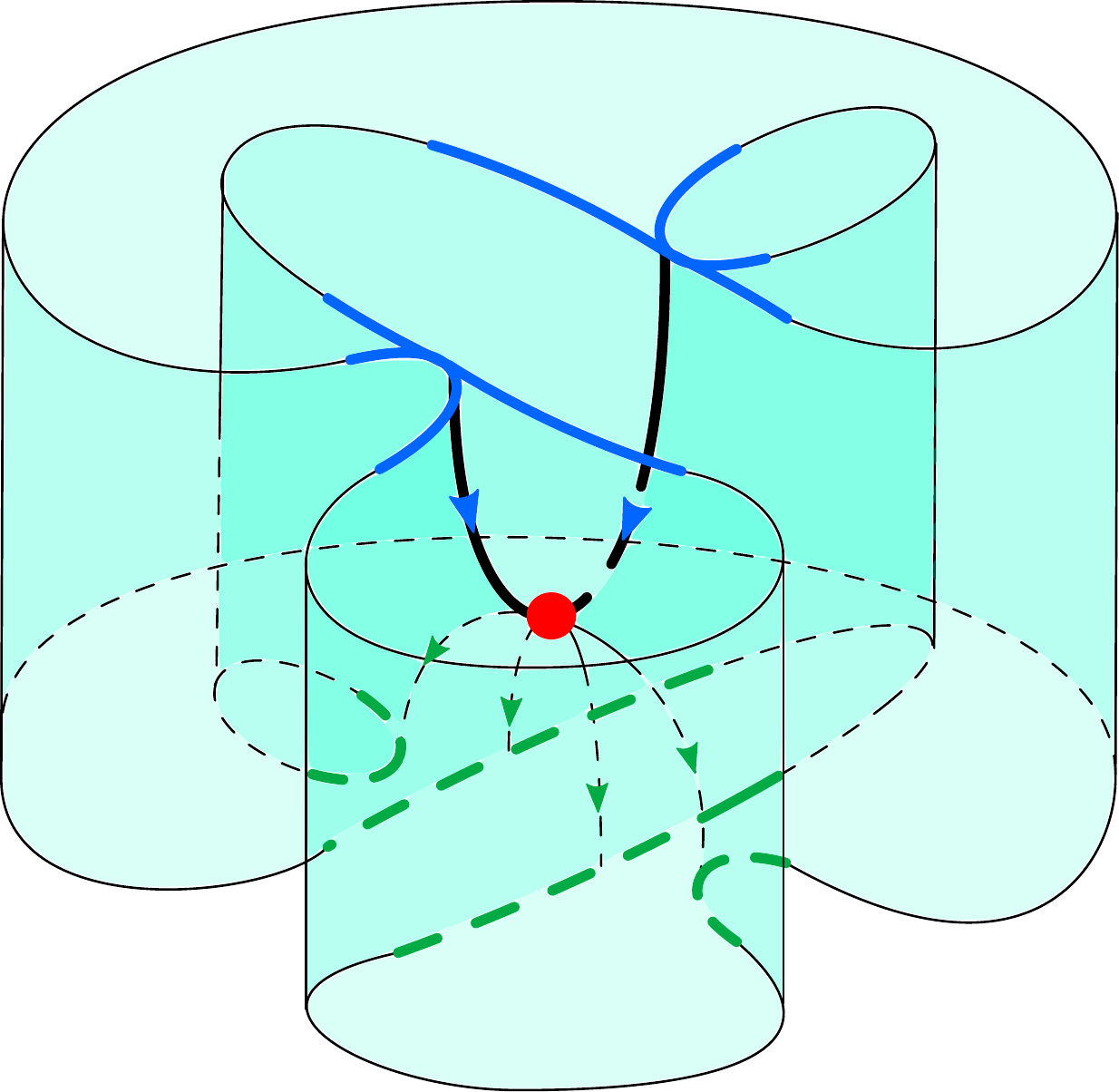}\end{overpic} & & & \begin{overpic}[align=c,scale=0.5]{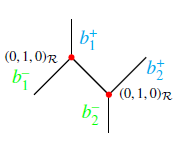}
                \put(25,86){\includegraphics[align=c,scale=0.3]{Branched1.pdf}}
                \put(67,65){\includegraphics[align=c,scale=0.3]{Branched1.pdf}}
                \put(2,17){\includegraphics[align=c,scale=0.3]{Branched1.pdf}}
                \put(45,-6){\includegraphics[align=c,scale=0.3]{Branched1.pdf}}
                \put(115,40){$\rightarrow$}
                \end{overpic} \hspace{0.9cm} \begin{overpic}[align=c,scale=0.5]{dsss3.png}
                \put(24,86){\includegraphics[align=c,scale=0.3]{Branched3b.pdf}}
                \put(5,-6){\includegraphics[align=c,scale=0.3]{Branched1.pdf}}
                \put(65,-6){\includegraphics[align=c,scale=0.3]{Branched1.pdf}}
                \end{overpic} &  \\
                & & & &  \\
                \hline
                & & & &  \\
                \begin{overpic}[align=c,scale=0.38]{Paints2.pdf}\end{overpic} \hspace{0.5cm} \begin{overpic}[align=c,scale=0.38]{Paints2.pdf}
                \put(118,40){$\rightarrow$}
                \end{overpic} \hspace{1.2cm} \begin{overpic}[align=c,scale=0.36]{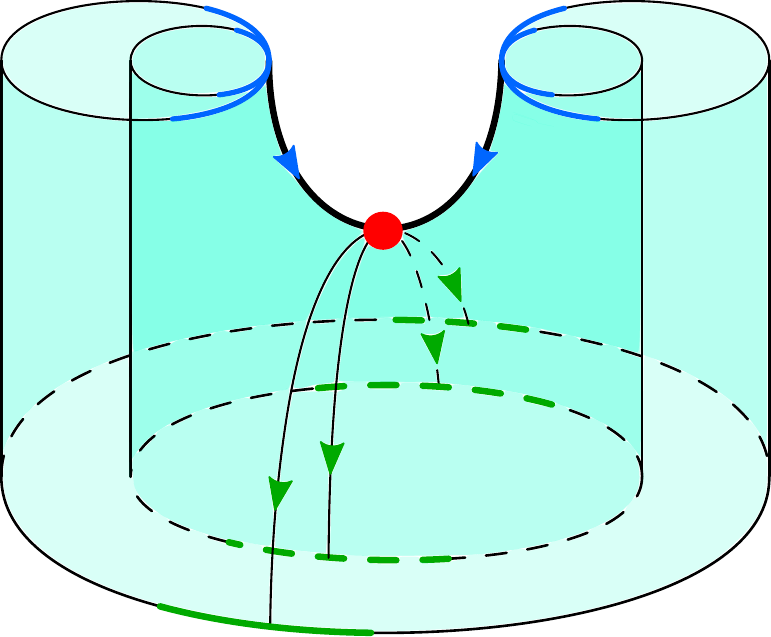}\end{overpic} \hspace{0.4cm}  & & & \begin{overpic}[align=c,scale=0.5]{rs3.png}
                \put(5,85){\includegraphics[scale=0.3,align=c]{Branched1.pdf}}
             \put(62,85){\includegraphics[scale=0.3,align=c]{Branched1.pdf}}
             \put(35,-5){\includegraphics[scale=0.3,align=c]{Branched1.pdf}}
                \end{overpic} \begin{overpic}[align=c,scale=0.5]{rs3.png}
                \put(5,85){\includegraphics[scale=0.3,align=c]{Branched1.pdf}}
             \put(62,85){\includegraphics[scale=0.3,align=c]{Branched1.pdf}}
             \put(35,-5){\includegraphics[scale=0.3,align=c]{Branched1.pdf}}
             \put(100,40){$\rightarrow$}
                \end{overpic} \hspace{0.8cm} \begin{overpic}[align=c,scale=0.5]{dsss4.png}
                \put(5,-8){\includegraphics[scale=0.3,align=c]{Branched1.pdf}}
             \put(65,-8){\includegraphics[scale=0.3,align=c]{Branched1.pdf}}
             \put(0,86){\includegraphics[scale=0.3,align=c]{Branched2.pdf}}
             \put(60,86){\includegraphics[scale=0.3,align=c]{Branched2.pdf}}
                \end{overpic} &  \\
                & & & &  \\
                \hline
                & & & &  \\
                \begin{overpic}[align=c,scale=0.38]{Paints2.pdf}\end{overpic} \hspace{0.5cm} \begin{overpic}[align=c,scale=0.38]{Paints.pdf}
                \put(120,38){$\rightarrow$}\end{overpic} \hspace{1.2cm}  \begin{overpic}[align=c,scale=0.25]{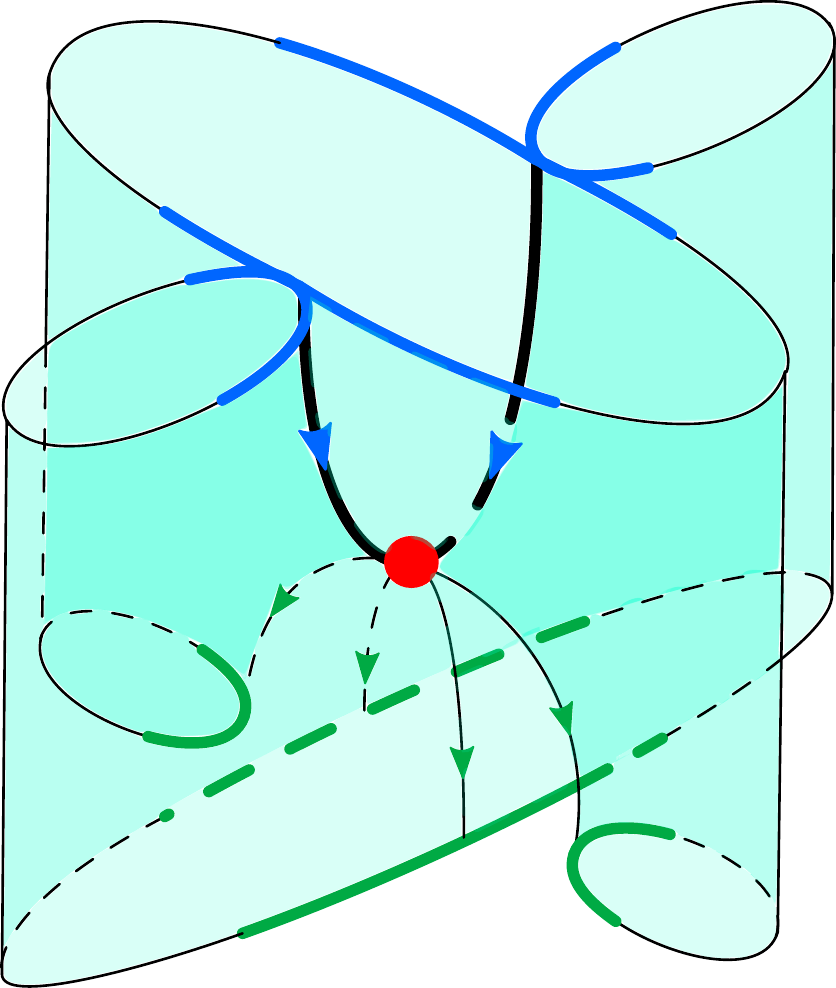}\end{overpic} & & & \begin{overpic}[align=c,scale=0.5]{rs3.png}
                \put(5,85){\includegraphics[scale=0.3,align=c]{Branched1.pdf}}
             \put(62,85){\includegraphics[scale=0.3,align=c]{Branched1.pdf}}
             \put(35,-5){\includegraphics[scale=0.3,align=c]{Branched1.pdf}}
                \end{overpic} \begin{overpic}[align=c,scale=0.5]{rs2.png}
                \put(35,85){\includegraphics[align=c,scale=0.3]{Branched1.pdf}}
                \put(5,-5){\includegraphics[align=c,scale=0.3]{Branched1.pdf}}
                \put(65,-5){\includegraphics[align=c,scale=0.3]{Branched1.pdf}}
                \put(100,38){$\rightarrow$}
                \end{overpic} \hspace{0.8cm} \begin{overpic}[align=c,scale=0.5]{dsss5.png}
		        \put(25,86){\includegraphics[scale=0.3,align=c]{Branched3b.pdf}}
		        \put(38,-11){\includegraphics[scale=0.3,align=c]{Branched1.pdf}}
		        \put(0,0){\includegraphics[scale=0.3,align=c]{Branched1.pdf}}
		        \put(72,0){\includegraphics[scale=0.3,align=c]{Branched1.pdf}}
                \end{overpic} &  \\
                %& & & $\mathcal{B}^{+} = \mathcal{B}^{-}$  &  \\
                & & & &  \\
                \hline
                & & & &  \\
                \begin{overpic}[align=c,scale=0.38]{Paints.pdf}\end{overpic} \hspace{0.5cm} \begin{overpic}[align=c,scale=0.38]{Paints.pdf}
                \put(120,38){$\rightarrow$} \end{overpic} \hspace{1.2cm}  \begin{overpic}[align=c,scale=0.36]{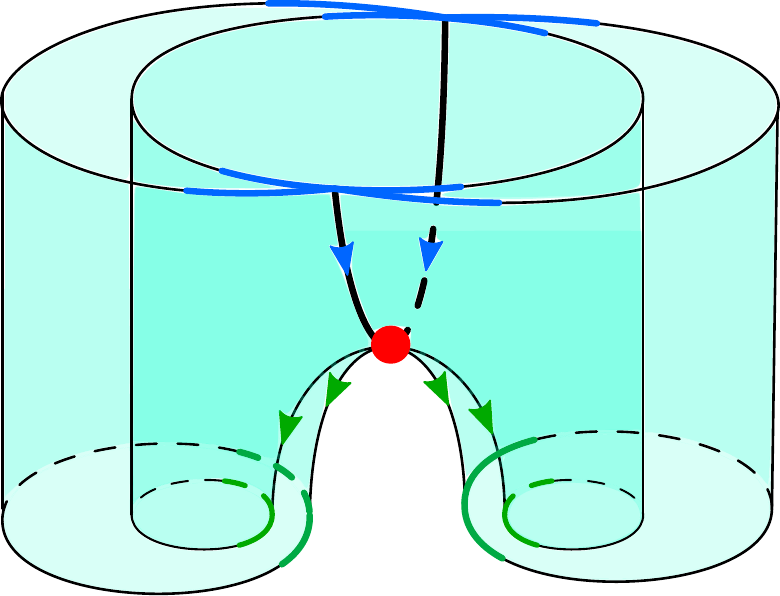}\end{overpic} & & & \begin{overpic}[align=c,scale=0.5]{rs2.png}
                \put(35,85){\includegraphics[align=c,scale=0.3]{Branched1.pdf}}
                \put(5,-5){\includegraphics[align=c,scale=0.3]{Branched1.pdf}}
                \put(65,-5){\includegraphics[align=c,scale=0.3]{Branched1.pdf}}
                \end{overpic} \hspace{0.4cm} \begin{overpic}[align=c,scale=0.5]{rs2.png}
                \put(95,38){$\rightarrow$}
                \put(35,85){\includegraphics[align=c,scale=0.3]{Branched1.pdf}}
                \put(5,-5){\includegraphics[align=c,scale=0.3]{Branched1.pdf}}
                \put(65,-5){\includegraphics[align=c,scale=0.3]{Branched1.pdf}}
                \end{overpic} \hspace{0.8cm} \begin{overpic}[scale=0.5,align=c]{dsss7.png}
		        \put(28,86){\includegraphics[scale=0.3,align=c]{Branched3a.pdf}}
		        \put(-15,5){\includegraphics[scale=0.3,align=c]{Branched1.pdf}}
		        \put(12,-11){\includegraphics[scale=0.3,align=c]{Branched1.pdf}}
		        \put(58,-11){\includegraphics[scale=0.3,align=c]{Branched1.pdf}}
		        \put(90,5){\includegraphics[scale=0.3,align=c]{Branched1.pdf}}
	            \end{overpic} & \\
                %& & & $\mathcal{B}^{+} = \mathcal{B}^{-} - 1$  &  \\
                & & & &  \\
                \end{tabular}}
			\caption{Minimal isolating blocks for a singularity  $p \in \mathcal{H}^{\mathcal{D}}_{ss_{s}}$}
			\label{doubleSSsblocks}
	\end{table}

%\pagebreak 

\begin{itemize}
    \item[v) ] $p \in \mathcal{H}^{\mathcal{T}}_{\eta}$ 
    
    A handle 
 $\mathcal{H}^{\mathcal{T}}_{ssa}$ corresponds to two regular handles  $\mathcal{H}^{\mathcal{R}}_{s}$ plus one regular handle  $\mathcal{H}^{\mathcal{R}}_{a}$, in which six pairs of stable orbits are identified.  Equivalently, one can consider a double handle  $\mathcal{H}^{\mathcal{D}}_{sa}$ plus a regular handle $\mathcal{H}^{\mathcal{R}}_{s}$,  or a double handle $\mathcal{H}^{\mathcal{D}}_{ss_{u}}$ plus a regular handle $\mathcal{H}^{\mathcal{R}}_{a}$, followed by the identification of four pairs of stable orbits. See Tables  \ref{tabela:Htriplo} and \ref{tabela:Htriplo2}.

\begin{table}[htb]
		\centering
		\resizebox{0.85\linewidth}{!}{
		\begin{tabular}{ccccccccc}
			 \cellcolor{gray!20} & \cellcolor{gray!20} Pre quotient  & \cellcolor{gray!20} & \cellcolor{gray!20} & \cellcolor{gray!20} Local chart & \cellcolor{gray!20} & \cellcolor{gray!20} & \cellcolor{gray!20} Attaching region & \cellcolor{gray!20} \\
			 %& & & & & & & & & & & \\
			 \hline
			 & & & & & & & & \\
			 & \includegraphics[align=c,scale=0.55]{AlcaD.pdf} \hspace{0.8cm} \includegraphics[align=c,scale=0.45]{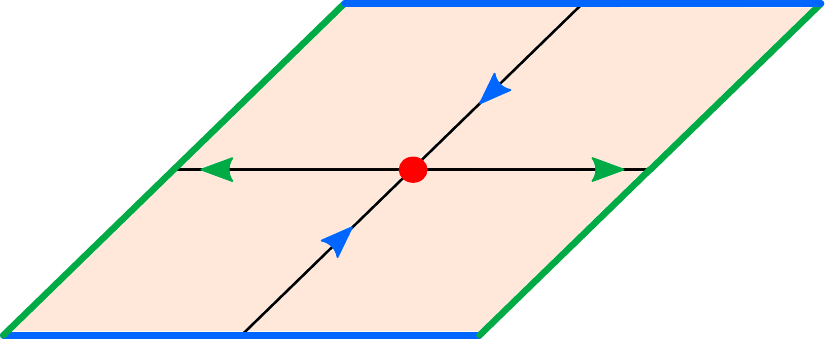} &  &  & \includegraphics[align=c,scale=0.55]{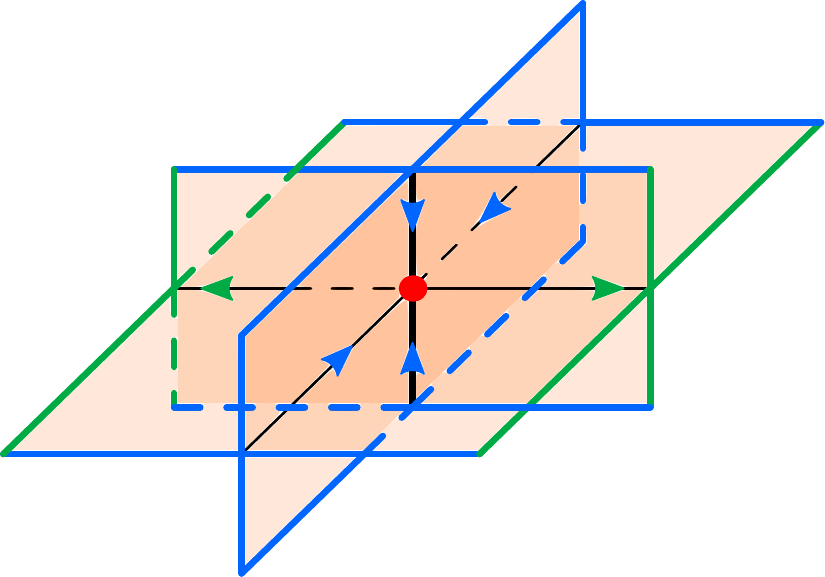} & & & \includegraphics[align=c,scale=0.5]{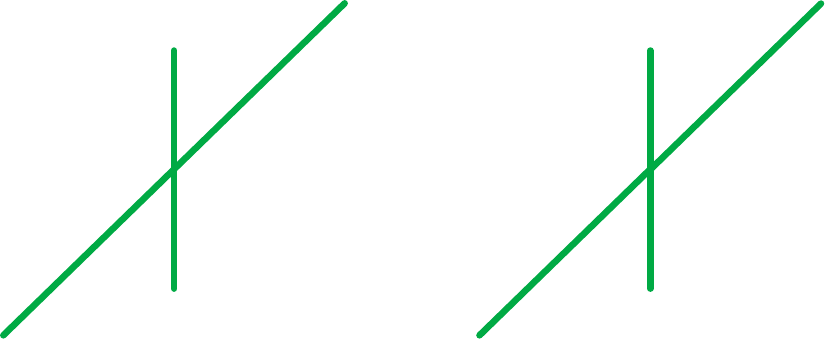} &   \\
			 &  &  &  &  & & & &  \\
		     %\hline
		\end{tabular}}
		%\mbox{ \ \ \ \ í­ndice do Ponto Regular}
		\caption{Representation of a triple handle $\mathcal{H}^{\mathcal{T}}_{ssa}$ by gluing the handles  $\mathcal{H}^{\mathcal{D}}_{sa}$ and $\mathcal{H}^{\mathcal{R}}_{s}$}
		\label{tabela:Htriplo}
	\end{table}

\begin{table}[htb]
		\centering
		\resizebox{0.85\linewidth}{!}{
		\begin{tabular}{ccccccccc}
			 \cellcolor{gray!20} & \cellcolor{gray!20} Pre quotient  & \cellcolor{gray!20} & \cellcolor{gray!20} & \cellcolor{gray!20} Local chart & \cellcolor{gray!20} & \cellcolor{gray!20} & \cellcolor{gray!20} Attaching region & \cellcolor{gray!20} \\
			 %& & & & & & & & & & & \\
			 \hline
			 & & & & & & & & \\
			 & \includegraphics[align=c,scale=0.55]{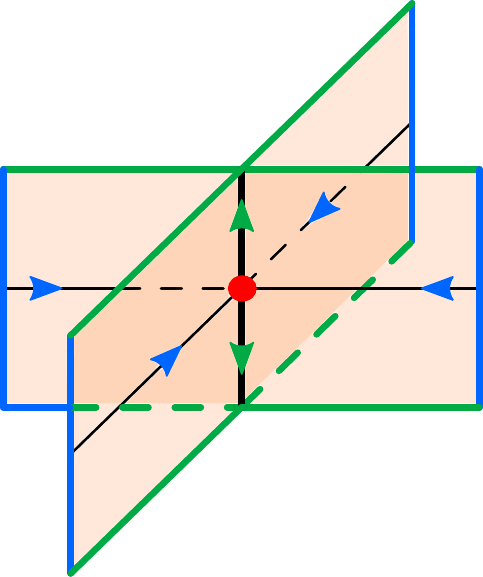} \hspace{0.8cm} \includegraphics[align=c,scale=0.45]{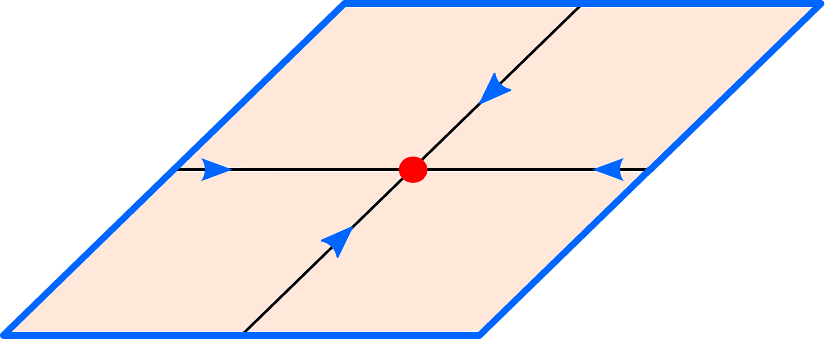} &  &  & \includegraphics[align=c,scale=0.55]{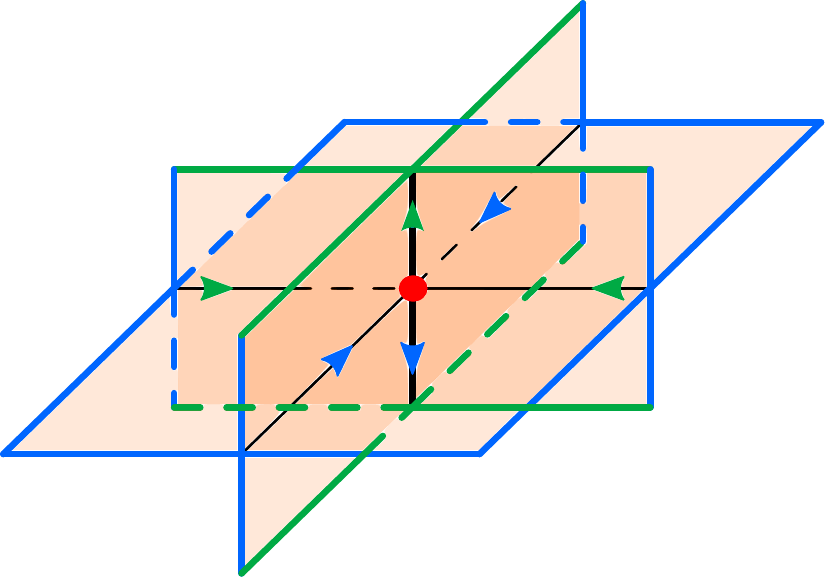} & & & \includegraphics[align=c,scale=0.5]{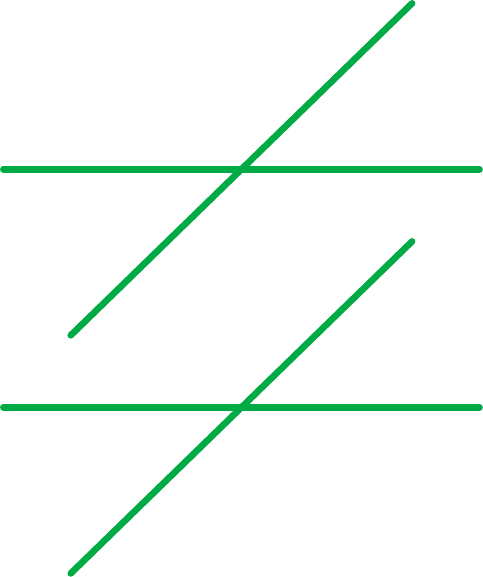} &   \\
			 &  &  &  &  & & & &  \\
		     %\hline
		\end{tabular}}
		%\mbox{ \ \ \ \ í­ndice do Ponto Regular}
		\caption{Representation of a triple handle $\mathcal{H}^{\mathcal{T}}_{ssa}$ by gluing the handles  $\mathcal{H}^{\mathcal{D}}_{ss_{u}}$ and $\mathcal{H}^{\mathcal{R}}_{a}$}
		\label{tabela:Htriplo2}
	\end{table}

 The attaching region has two connected components each of which has one branched chart, hence $N^-$ has either one connected component with two branched charts,  or two connected component with one branched chart each. In the prior case, we have two possibilities: two circles that intersect in two points, or three circles that form a figure eight wedge a circle. In the latter case, $N^-$ is the disjoint union of two figures eight.
 	 	   The isolating block for the singularity $p$ of nature $ssa$ is produced by identifying the corresponding pairs of stable orbits of the saddles $\mathcal{H}^{\mathcal{D}}_{sa}$ and  $\mathcal{H}^{\mathcal{R}}_{s}$ or of the saddle  $\mathcal{H}^{\mathcal{D}}_{ss_{u}}$ and  the handle $\mathcal{H}^{\mathcal{R}}_{a}$. See Tables \ref{tripleblocks1} and \ref{tripleblocks2}  for the schematic representation of the resulting blocks by their corresponding Lyapunov semi-graphs.

\end{itemize}

%\newpage

\begin{table}[!htb]
		\centering
		\resizebox{0.88\linewidth}{!}{
		\begin{tabular}{cc|cc}
			 \multicolumn{4}{c}{\cellcolor{gray!20} \scriptsize{Lyapunov graph moves} ($\mathcal{D}_{sa} + \mathcal{R}_{s} \rightarrow \mathcal{T}_{ssa}$)} \\
		    \hline
		    & & & \\
		    & & & \\
		    & \begin{overpic}[align=c,scale=0.3]{dsss1.png}
                \put(36,-8){\includegraphics[align=c,scale=0.2]{Branched1.pdf}}
                \put(23,86){\includegraphics[align=c,scale=0.2]{Branched3b.pdf}}
                \end{overpic} \begin{overpic}[align=c,scale=0.3]{rs3.png}
                \put(5,85){\includegraphics[scale=0.2,align=c]{Branched1.pdf}}
             \put(62,85){\includegraphics[scale=0.2,align=c]{Branched1.pdf}}
             \put(35,-5){\includegraphics[scale=0.2,align=c]{Branched1.pdf}}
             \put(100,35){$\rightarrow$}\end{overpic} \hspace{0.4cm} \begin{overpic}[scale=0.3,align=c]{tssa1.png}
		        \put(28,100){\includegraphics[scale=0.2,align=c]{Branched5b.pdf}}
		        \put(28,-11){\includegraphics[scale=0.2,align=c]{Branched3a.pdf}}
	            \end{overpic} & \begin{overpic}[align=c,scale=0.3]{dsa1.png}
             \put(25,86){\includegraphics[scale=0.2,align=c]{Branched3a.pdf}}
             \put(36,-8){\includegraphics[scale=0.2,align=c]{Branched1.pdf}}
             \end{overpic} \begin{overpic}[align=c,scale=0.3]{rs3.png}
                \put(5,85){\includegraphics[scale=0.2,align=c]{Branched1.pdf}}
             \put(62,85){\includegraphics[scale=0.2,align=c]{Branched1.pdf}}
             \put(35,-5){\includegraphics[scale=0.2,align=c]{Branched1.pdf}}
             \put(100,35){$\rightarrow$}\end{overpic} \hspace{0.4cm} \begin{overpic}[scale=0.3,align=c]{tssa1.png}
		        \put(26,100){\includegraphics[scale=0.2,align=c]{Branched5d.pdf}}
		        \put(28,-11){\includegraphics[scale=0.2,align=c]{Branched3a.pdf}}
	            \end{overpic} \\
	            & & & \\
	            \hline
	            & & & \\
	            & & & \\
	            & \begin{overpic}[align=c,scale=0.3]{dsss1.png}
                \put(36,-8){\includegraphics[align=c,scale=0.2]{Branched1.pdf}}
                \put(23,86){\includegraphics[align=c,scale=0.2]{Branched3b.pdf}}
                \end{overpic} \begin{overpic}[scale=0.3,align=c]{rs2.png}
		        \put(34,86){\includegraphics[scale=0.2,align=c]{Branched1.pdf}}
		        \put(66,-9){\includegraphics[scale=0.2,align=c]{Branched1.pdf}}
		        \put(3,-9){\includegraphics[scale=0.2,align=c]{Branched1.pdf}} \put(100,35){$\rightarrow$}\end{overpic} \hspace{0.4cm} \begin{overpic}[scale=0.3,align=c]{tssa1.png}
		        \put(26,100){\includegraphics[scale=0.2,align=c]{Branched5d.pdf}}
		        \put(28,-11){\includegraphics[scale=0.2,align=c]{Branched3b.pdf}}
	            \end{overpic} & \begin{overpic}[align=c,scale=0.3]{dsss1.png}
                \put(36,-8){\includegraphics[align=c,scale=0.2]{Branched1.pdf}}
                \put(23,86){\includegraphics[align=c,scale=0.2]{Branched3a.pdf}}
                \end{overpic} \begin{overpic}[scale=0.3,align=c]{rs2.png}
		        \put(34,86){\includegraphics[scale=0.2,align=c]{Branched1.pdf}}
		        \put(66,-9){\includegraphics[scale=0.2,align=c]{Branched1.pdf}}
		        \put(3,-9){\includegraphics[scale=0.2,align=c]{Branched1.pdf}} \put(100,35){$\rightarrow$}\end{overpic} \hspace{0.4cm} \begin{overpic}[scale=0.3,align=c]{tssa1.png}
		        \put(26,100){\includegraphics[scale=0.2,align=c]{Branched5e.pdf}}
		        \put(28,-11){\includegraphics[scale=0.2,align=c]{Branched3b.pdf}}
	            \end{overpic} \\
	            & & & \\
	       \hline  
	       & & & \\
	            & & & \\
	            & \begin{overpic}[align=c,scale=0.3]{dsss1.png}
                \put(36,-8){\includegraphics[align=c,scale=0.2]{Branched1.pdf}}
                \put(23,86){\includegraphics[align=c,scale=0.2]{Branched3b.pdf}}
                \end{overpic} \begin{overpic}[scale=0.3,align=c]{rs1.png}
		        \put(36,86){\includegraphics[scale=0.2,align=c]{Branched1.pdf}}
		        \put(36,-9){\includegraphics[scale=0.2,align=c]{Branched1.pdf}} \put(100,35){$\rightarrow$}\end{overpic} \hspace{0.4cm} \begin{overpic}[scale=0.3,align=c]{tssa1.png}
		        \put(26,100){\includegraphics[scale=0.2,align=c]{Branched5d.pdf}}
		        \put(28,-11){\includegraphics[scale=0.2,align=c]{Branched3a.pdf}}
	            \end{overpic} & \begin{overpic}[align=c,scale=0.3]{dsa1.png}
             \put(25,86){\includegraphics[scale=0.2,align=c]{Branched3a.pdf}}
             \put(36,-8){\includegraphics[scale=0.2,align=c]{Branched1.pdf}}
             \end{overpic} \begin{overpic}[scale=0.3,align=c]{rs1.png}
		        \put(36,86){\includegraphics[scale=0.2,align=c]{Branched1.pdf}}
		        \put(36,-9){\includegraphics[scale=0.2,align=c]{Branched1.pdf}}
             \put(100,35){$\rightarrow$}\end{overpic} \hspace{0.4cm} \begin{overpic}[scale=0.3,align=c]{tssa1.png}
		        \put(26,100){\includegraphics[scale=0.2,align=c]{Branched5e.pdf}}
		        \put(28,-11){\includegraphics[scale=0.2,align=c]{Branched3a.pdf}}
	            \end{overpic} & \\ 
	            & & & \\
	       \hline     
	            & & & \\
	            & & & \\
	            & \begin{overpic}[align=c,scale=0.3]{dsss3.png}
                \put(24,86){\includegraphics[align=c,scale=0.2]{Branched3a.pdf}}
                \put(5,-6){\includegraphics[align=c,scale=0.2]{Branched1.pdf}}
                \put(65,-6){\includegraphics[align=c,scale=0.2]{Branched1.pdf}}
                \end{overpic} \begin{overpic}[align=c,scale=0.3]{rs3.png}
                \put(5,85){\includegraphics[scale=0.2,align=c]{Branched1.pdf}}
             \put(62,85){\includegraphics[scale=0.2,align=c]{Branched1.pdf}}
             \put(35,-5){\includegraphics[scale=0.2,align=c]{Branched1.pdf}}
             \put(100,35){$\rightarrow$}\end{overpic} \hspace{0.4cm} \begin{overpic}[scale=0.3,align=c]{tssa1.png}
		        \put(26,100){\includegraphics[scale=0.2,align=c]{Branched5d.pdf}}
		        \put(28,-11){\includegraphics[scale=0.2,align=c]{Branched3b.pdf}}
	            \end{overpic}  & \begin{overpic}[align=c,scale=0.3]{dsss3.png}
                \put(24,86){\includegraphics[align=c,scale=0.2]{Branched3a.pdf}}
                \put(5,-6){\includegraphics[align=c,scale=0.2]{Branched1.pdf}}
                \put(65,-6){\includegraphics[align=c,scale=0.2]{Branched1.pdf}}
                \end{overpic} \begin{overpic}[scale=0.3,align=c]{rs1.png}
		        \put(36,86){\includegraphics[scale=0.2,align=c]{Branched1.pdf}}
		        \put(36,-9){\includegraphics[scale=0.2,align=c]{Branched1.pdf}}
             \put(100,35){$\rightarrow$}\end{overpic} \hspace{0.4cm} \begin{overpic}[scale=0.3,align=c]{tssa1.png}
		        \put(26,100){\includegraphics[scale=0.2,align=c]{Branched5e.pdf}}
		        \put(28,-11){\includegraphics[scale=0.2,align=c]{Branched3b.pdf}}
	            \end{overpic} \\
	            & & & \\
	            \hline
	            \multicolumn{4}{c}{} \\
	            \multicolumn{4}{c}{} \\
	            \multicolumn{4}{c}{\begin{overpic}[align=c,scale=0.3]{dsss3.png}
                \put(24,86){\includegraphics[align=c,scale=0.2]{Branched3a.pdf}}
                \put(5,-10){\includegraphics[align=c,scale=0.2]{Branched1.pdf}}
                \put(65,-10){\includegraphics[align=c,scale=0.2]{Branched1.pdf}}
                \end{overpic} \hspace{0.2cm} \begin{overpic}[scale=0.3,align=c]{rs2.png}
		        \put(34,86){\includegraphics[scale=0.2,align=c]{Branched1.pdf}}
		        \put(66,-9){\includegraphics[scale=0.2,align=c]{Branched1.pdf}}
		        \put(3,-9){\includegraphics[scale=0.2,align=c]{Branched1.pdf}} \put(100,35){$\rightarrow$}\end{overpic} \hspace{0.4cm} \begin{overpic}[scale=0.3,align=c]{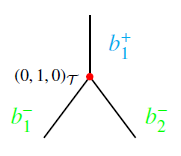}
		        \put(60,-10){\includegraphics[scale=0.2,align=c]{Branched2.pdf}}
		        \put(-3,-10){\includegraphics[scale=0.2,align=c]{Branched2.pdf}}
		         \put(24,95){\includegraphics[align=c,scale=0.2]{Branched5e.pdf}} \end{overpic} \hspace{0.2cm}} \\
		        \multicolumn{4}{c}{} \\
\end{tabular}}
	   \caption{ Distinguished branched $1$-manifold on the boundary of minimal isolating blocks for $p \in \mathcal{H}^{\mathcal{T}}_{ssa}$
	   %$1$-manifolds variedades ramificadas nos blocos isolantes minimais para $p \in \mathcal{H}^{\mathcal{T}}_{ssa}$
	    }		\label{tripleblocks1}
\end{table}

\begin{table}[!htb]
		\centering
		\resizebox{0.88\linewidth}{!}{
		\begin{tabular}{cc|cc}
			 \multicolumn{4}{c}{\cellcolor{gray!20} \scriptsize{Lyapunov graph moves} ($\mathcal{D}_{ss_{u}} + \mathcal{R}_{a} \rightarrow \mathcal{T}_{ssa}$)} \\
		    \hline
		    & & & \\
		    & & & \\
		    & \begin{overpic}[scale=0.3,align=c]{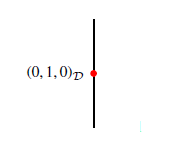}
                \put(-15,-11){(}
		        \put(-5,-11){\includegraphics[scale=0.2,align=c]{Branched3a.pdf}}
		        \put(48,-11){,}
		        \put(58,-11){\includegraphics[scale=0.2,align=c]{Branched3b.pdf}}
		        \put(125,-11){)}
		        \put(34,86){\includegraphics[scale=0.2,align=c]{Branched1.pdf}}
	            \end{overpic} \hspace{0.4cm} \begin{overpic}[scale=0.3,align=c]{ra.png}
		        \put(32,88){\includegraphics[scale=0.2,align=c]{Branched1.pdf}}
             \put(100,35){$\rightarrow$}\end{overpic} \hspace{0.4cm} \begin{overpic}[scale=0.3,align=c]{tssa1.png}
		        \put(25,100){\includegraphics[scale=0.13,align=c]{B5T2.pdf}}
		        \put(-15,-11){(}
		        \put(-5,-11){\includegraphics[scale=0.2,align=c]{Branched3a.pdf}}
		        \put(48,-11){,}
		        \put(58,-11){\includegraphics[scale=0.2,align=c]{Branched3b.pdf}}
		        \put(125,-11){)}
	            \end{overpic} \hspace{0.4cm} & \hspace{0.4cm}  \begin{overpic}[scale=0.3,align=c]{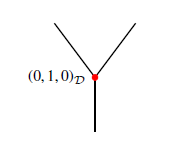}
		        \put(28,-9){\includegraphics[scale=0.2,align=c]{Branched3a.pdf}}
		        \put(66,86){\includegraphics[scale=0.2,align=c]{Branched1.pdf}}
		        \put(3,86){\includegraphics[scale=0.2,align=c]{Branched1.pdf}}
	            \end{overpic} \hspace{0.4cm} \begin{overpic}[scale=0.3,align=c]{ra.png}
		        \put(32,88){\includegraphics[scale=0.2,align=c]{Branched1.pdf}}
             \put(100,35){$\rightarrow$}\end{overpic} \hspace{0.4cm} \begin{overpic}[scale=0.3,align=c]{tssa1.png}
		        \put(21,95){\includegraphics[scale=0.2,align=c]{Branched5e.pdf}}

		        \put(25,-9){\includegraphics[scale=0.2,align=c]{Branched3a.pdf}}
		        
	            \end{overpic} & \\
	            & & & \\
	            \hline
	            & & & \\
	            & & & \\
                & \begin{overpic}[scale=0.3,align=c]{ssu2.png}
		        \put(25,-9){\includegraphics[scale=0.2,align=c]{Branched3b.pdf}}
		        \put(66,86){\includegraphics[scale=0.2,align=c]{Branched1.pdf}}
		        \put(3,86){\includegraphics[scale=0.2,align=c]{Branched1.pdf}}
	            \end{overpic} \hspace{0.4cm} \begin{overpic}[scale=0.3,align=c]{ra.png}
		        \put(32,88){\includegraphics[scale=0.2,align=c]{Branched1.pdf}}
             \put(100,35){$\rightarrow$}\end{overpic} \hspace{0.4cm} \begin{overpic}[scale=0.3,align=c]{tssa1.png}
		        \put(20,95){\includegraphics[scale=0.22,align=c]{Branched5c.pdf}}
             
		        \put(25,-10){\includegraphics[scale=0.2,align=c]{Branched3b.pdf}}
		       
	            \end{overpic} \hspace{0.4cm} & \hspace{0.2cm} \begin{overpic}[scale=0.3,align=c]{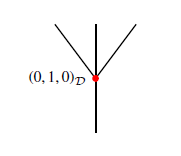}
		        \put(-15,-11){(}
		        \put(-5,-11){\includegraphics[scale=0.2,align=c]{Branched3a.pdf}}
		        \put(48,-11){,}
		        \put(58,-11){\includegraphics[scale=0.2,align=c]{Branched3b.pdf}}
		        \put(125,-11){)}
		        \put(35,86){\includegraphics[scale=0.2,align=c]{Branched1.pdf}}
		        \put(0,75){\includegraphics[scale=0.2,align=c]{Branched1.pdf}}
		        \put(74,75){\includegraphics[scale=0.2,align=c]{Branched1.pdf}}
	            \end{overpic} \hspace{0.4cm} \begin{overpic}[scale=0.3,align=c]{ra.png}
		        \put(32,88){\includegraphics[scale=0.2,align=c]{Branched1.pdf}}
             \put(100,35){$\rightarrow$}\end{overpic} \hspace{0.4cm} \begin{overpic}[scale=0.3,align=c]{tssa1.png}
		        \put(20,100){\includegraphics[scale=0.23,align=c]{Branched5d.pdf}}
		        \put(-15,-11){(}
		        \put(-5,-11){\includegraphics[scale=0.2,align=c]{Branched3a.pdf}}
		        \put(48,-11){,}
		        \put(58,-11){\includegraphics[scale=0.2,align=c]{Branched3b.pdf}}
		        \put(125,-11){)}
	            \end{overpic}  \\
	            & & & \\
	            \hline
	            & & & \\
	            & & & \\
	            & \begin{overpic}[scale=0.3,align=c]{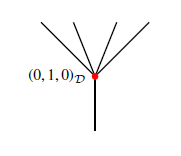}
		        \put(27,-11){\includegraphics[scale=0.2,align=c]{Branched3a.pdf}}
		        \put(-12,75){\includegraphics[scale=0.2,align=c]{Branched1.pdf}}
		        \put(15,90){\includegraphics[scale=0.2,align=c]{Branched1.pdf}}
		        \put(55,90){\includegraphics[scale=0.2,align=c]{Branched1.pdf}}
		        \put(86,75){\includegraphics[scale=0.2,align=c]{Branched1.pdf}}
	            \end{overpic} \hspace{0.4cm} \begin{overpic}[scale=0.3,align=c]{ra.png}
		        \put(32,88){\includegraphics[scale=0.2,align=c]{Branched1.pdf}}
             \put(100,35){$\rightarrow$}\end{overpic} \hspace{0.4cm} \begin{overpic}[scale=0.3,align=c]{tssa1.png}
		        \put(28,100){\includegraphics[scale=0.2,align=c]{Branched5b.pdf}}
		        \put(28,-11){\includegraphics[scale=0.2,align=c]{Branched3a.pdf}}
	            \end{overpic} \hspace{0.2cm} & \hspace{0.2cm} \begin{overpic}[scale=0.3,align=c]{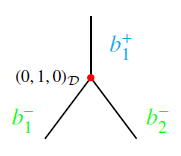}
	            \put(33,88){\includegraphics[scale=0.2,align=c]{Branched1.pdf}}
	            \put(60,-10){\includegraphics[scale=0.2,align=c]{Branched2.pdf}}
		        \put(-3,-10){\includegraphics[scale=0.2,align=c]{Branched2.pdf}} \end{overpic} \hspace{0.4cm} \begin{overpic}[scale=0.3,align=c]{ra.png}
		        \put(32,88){\includegraphics[scale=0.2,align=c]{Branched1.pdf}}
             \put(100,35){$\rightarrow$}\end{overpic} \hspace{0.4cm} \begin{overpic}[scale=0.3,align=c]{tssa12.png}
             \put(60,-10){\includegraphics[scale=0.2,align=c]{Branched2.pdf}}
		        \put(-3,-10){\includegraphics[scale=0.2,align=c]{Branched2.pdf}}
		         \put(25,96){\includegraphics[align=c,scale=0.13]{B5T2.pdf}} \end{overpic}  \\
             & & & \\
             \hline
	            \multicolumn{4}{c}{} \\
	            \multicolumn{4}{c}{} \\
	            \multicolumn{4}{c}{\begin{overpic}[scale=0.3,align=c]{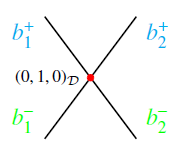}
	            \put(3,88){\includegraphics[scale=0.2,align=c]{Branched1.pdf}}
	            \put(60,88){\includegraphics[scale=0.2,align=c]{Branched1.pdf}}
	            \put(60,-10){\includegraphics[scale=0.2,align=c]{Branched2.pdf}}
		        \put(-3,-10){\includegraphics[scale=0.2,align=c]{Branched2.pdf}} \end{overpic} \hspace{0.4cm} \begin{overpic}[scale=0.3,align=c]{ra.png}
		        \put(32,88){\includegraphics[scale=0.2,align=c]{Branched1.pdf}}
             \put(100,35){$\rightarrow$}\end{overpic} \hspace{0.4cm} \begin{overpic}[scale=0.3,align=c]{tssa12.png}
             \put(60,-10){\includegraphics[scale=0.2,align=c]{Branched2.pdf}}
		        \put(-3,-10){\includegraphics[scale=0.2,align=c]{Branched2.pdf}}
		         \put(22,95){\includegraphics[align=c,scale=0.2]{Branched5e.pdf}} \end{overpic}} \\
		         \multicolumn{4}{c}{} \\
	   \end{tabular}}
	   \caption{
	  Distinguished branched $1$-manifold on the boundary of minimal isolating blocks for $p \in \mathcal{H}^{\mathcal{T}}_{ssa}$
	   %$1$-manifolds variedades ramificadas nos blocos isolantes minimais para $p \in \mathcal{H}^{\mathcal{T}}_{ssa}$
	   }		\label{tripleblocks2}
\end{table}

\end{proof}

The next theorem proves the non-realizability of certain  Lyapunov  semi-graphs as a  minimal GS isolating block. The vertex on these semi-graphs are labelled with the Conley indices of a GS singularity and satisfy the Poincaré-Hopf condition.

\newtheorem{Lem}{Lemma}

\begin{Teo}[Local non-realizability]\label{teo:combinatorial}
Let $X_{t}$  be a GS flow associated to a vector field  $\mathbf{X} \in \Sigma^{r}_{0}(\mathbf{M})$, such that $p$ is a singularity of  $X_{t}$ and  $N$ is a minimal isolating block for $p$.  Then there is no Lyapunov semi-graph associated to $N$ such that:

\begin{itemize}
    \item[a) ] $e_{v}^{+} = 2$, $e_{v}^{-} = 4$, $p \in \mathcal{H}^{\mathcal{D}}_{ss_{s}}$;
    
    \item[b) ] $e_{v}^{+} = 2$, $e_{v}^{-} = 3$, $p \in \mathcal{H}^{\mathcal{D}}_{ss_{s}}$;
    
    \item[c) ] $e_{v}^{+} = 2$, $e_{v}^{-} = 1$, $b_{1}^{-} = 1$, $b_{1}^{+} \neq b_{2}^{+}$, $p \in \mathcal{H}^{\mathcal{D}}_{ss_{s}}$;
    
    \item[d) ] $e_{v}^{+} = 2$, $e_{v}^{-} = 2$, $b_{1}^{-} = b_{2}^{-} = 1$, $b_{1}^{+} \neq b_{2}^{+}$, $p \in \mathcal{H}^{\mathcal{D}}_{ss_{s}}$;
    
    \item[e) ] $e_{v}^{+} = 1$, $e_{v}^{-} = 2$, $b_{1}^{-} \neq b_{2}^{-}$, $p \in \mathcal{H}^{\mathcal{T}}_{ssa}$.
\end{itemize}
\end{Teo}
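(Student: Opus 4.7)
The plan is to prove each of the five non-realizability statements by combining the complete enumeration of minimal GS isolating blocks provided by Theorem \ref{teo:colecao} with additional structural observations: a connectedness argument for cases (a) and (b), and a symmetry argument for cases (c), (d), and (e).

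For cases (a) and (b), with $p \in \mathcal{H}^{\mathcal{D}}_{ss_s}$, I would start from the fact that the attaching region $A_k$ consists of four pairwise disjoint unstable arcs (two from each saddle) with no branch points; hence the minimality condition forces $N^-$ to be a disjoint union of circles. Every component of $N^-$ must contain at least one arc of $A_k$, otherwise $N$ would be disconnected. The entering portion of $\partial\mathcal{H}^{\mathcal{D}}_{ss_s}$ is itself a connected one-complex, made of two branched charts --- each a cross with four edges meeting at an endpoint of the identified stable manifold --- joined through the entering arcs lying on the two saddle boundary circles. Gluing this complex to the complement of $f(A_k)$ in $N^-\times\{1\}$ preserves its connectedness, so the resulting $N^+$ has a single component. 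Hence $e_v^+ = 1$ whenever $e_v^- \in \{3,4\}$, contradicting cases (a) and (b).

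For cases (c) and (d), still with $p \in \mathcal{H}^{\mathcal{D}}_{ss_s}$ but $e_v^+ = 2$, the two components of $N^+$ correspond to the two branched charts located at the two endpoints of the identified stable manifold. The handle $\mathcal{H}^{\mathcal{D}}_{ss_s}$ admits a natural $\mathbb{Z}_2$-involution that reflects the double-crossing stratum through its midpoint and exchanges these two endpoints. In any minimal embedding producing $e_v^+ = 2$, this involution extends to a self-homeomorphism of $N$ interchanging the two components of $N^+$; consequently their first Betti numbers coincide, contradicting $b_1^+ \neq b_2^+$ in both cases. Case (e), for $p \in \mathcal{H}^{\mathcal{T}}_{ssa}$ with $e_v^- = 2$, is handled analogously using the decomposition $\mathcal{H}^{\mathcal{T}}_{ssa} = \mathcal{H}^{\mathcal{D}}_{ss_u} + \mathcal{H}^{\mathcal{R}}_a$ of Table \ref{tabela:Htriplo2}: the involution that exchanges the two regular saddles of $\mathcal{H}^{\mathcal{D}}_{ss_u}$ while fixing the attractor $\mathcal{H}^{\mathcal{R}}_a$ swaps the two components of $N^-$, forcing $b_1^- = b_2^-$.

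The principal obstacle will be verifying that the involutions invoked above are indeed realized as self-homeomorphisms of the full block $N$ for every admissible minimal embedding $f$ giving the prescribed $(e_v^+, e_v^-)$, i.e., that $f$ can always be chosen equivariantly. This reduces to enumerating the cyclic orderings of the arcs of $A_k$ on the circle or circles comprising $N^-$ and checking that any ordering which breaks the involution either violates minimality of $N^-$ or fails to yield the prescribed number of components in $N^+$; once this equivariance claim is settled, the symmetry arguments of the previous paragraph close cases (c)--(e), and the connectedness argument of the second paragraph closes (a)--(b).
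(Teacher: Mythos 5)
Your treatment of cases (a) and (b) follows the same route as the paper (which only writes out case (a) and declares the rest ``similar''): a connectedness argument showing that $e_{v}^{-}\in\{3,4\}$ forces $e_{v}^{+}=1$. However, one intermediate claim is wrong as stated: the entering portion of $\partial\mathcal{H}^{\mathcal{D}}_{ss_{s}}$ is \emph{not} a connected one-complex. It consists of two disjoint branched charts (crosses), one at each endpoint of the identified stable manifold; within the handle boundary these two crosses are separated by the exiting arcs, not joined by entering arcs. Connectedness of $N^{+}$ only appears after one adjoins the complementary arcs of $N^{-}\times\{1\}$ (the parts of each circle not covered by $f(A_{k})$), each of which runs from a neighborhood of one cross to a neighborhood of the other --- this is exactly how the paper phrases it: $N^{+}$ is a union of two circles meeting in two points. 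The conclusion survives, but the argument needs this repair.

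For cases (c), (d) and (e) you depart from the paper and, more importantly, your argument has a genuine gap that you yourself flag: the entire weight-equality conclusion rests on the claim that the sheet-swapping (resp.\ saddle-swapping) involution extends equivariantly over \emph{every} admissible minimal embedding $f$ with the prescribed $(e_{v}^{+},e_{v}^{-})$, and this is left unproved. That verification is not a formality --- it requires exactly the case analysis of cyclic orderings of the arcs of $A_{k}$ that you defer --- and it is also unnecessary. A direct count closes these cases: since the attaching region of $\mathcal{H}^{\mathcal{D}}_{ss_{s}}$ has no branched charts, a minimal block has exactly two branch points on $N^{+}$, namely the two crosses; every connected component of $N^{+}$ must contain at least one of them (a component consisting only of complementary arcs of $N^{-}\times\{1\}$ would have its endpoints attached to entering arcs of the handle, hence would meet a cross). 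So if $e_{v}^{+}=2$ each component contains exactly one $4$-valent branch point and therefore has first Betti number $2$, giving $b_{1}^{+}=b_{2}^{+}$ and ruling out (c) and (d). The identical count on the two branched charts of the attaching region of $\mathcal{H}^{\mathcal{T}}_{ssa}$, distributed one per component of $N^{-}$ when $e_{v}^{-}=2$, rules out (e). This is the ``similar'' analysis the paper alludes to, and it avoids the equivariance question entirely.
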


\begin{proof}

The idea of the proof is to analyze the connected components of $N^{+}$  in view of the steps of the construction of GS isolating blocks presented in the beginning of Section  \ref{sec:construcao}. This analysis is based on  the different possibilities of embeddings of the attaching regions of handles to $N^-$.

In order to prove $(a)$, recall that a double handle  
 $\mathcal{H}^{\mathcal{D}}_{ss_{s}}$ corresponds to  two regular handles $H_{1} = H_{2} = \mathcal{H}^{\mathcal{R}}_{s}$, where two stable orbits of  $H_{1}$ are identified,  biuniquivocally, to two stable orbits of  $H_{2}$ as shown in Table \ref{tabela:HDuploss}.  Furthermore, the attaching region  $A_{k}$ of the handle $\mathcal{H}^{\mathcal{D}}_{ss_{s}}$  is the union of two copies of  $S^{0} \times D^1$, where $D^1$ is the one-dimensional disc, i.e. $D^1\simeq [0,1]$.

    Considering $e_{v}^{-} = 4$, it follows that $N^{-}\times [0,1]$ has $4$ connected components, that is $N^{-}\times [0,1] = \cup_{i=1}^{4}{N_{i}\times [0,1]}$.  It follows from the construction of GS isolating blocks that an embedding $f: A_{k} \rightarrow N^{-}\times \{1\}$  maps  each disc $D^{1}_i$ in $A_{k}$ to $N_{i}\times\{1\}$.
    
The attaching map  of  $H_{1}$ connects it to 
    $N_{1}\times\{1\}$ and  $N_{2}\times \{1\}$. Also the attaching map of $H_2$ connects it to  $N_{3}\times\{1\}$ and $N_{4}\times \{1\}$. Whether these attaching maps are orientation preserving or not has no effect on the gluing of the handles.

  Thus,    $N^{+}$ is the union of two circles given by the stable part of  $N_{1}\times\{1\} \cup H_{1} \cup N_{2}\times \{1\}$ and $N_{3}\times\{1\} \cup H_{2} \cup N_{4}\times \{1\}$, intersecting in two points which correspond to the intersection of  $H_{1}$ and $H_{2}$ in $\mathcal{H}^{\mathcal{D}}_{ss_{s}}$.  Hence, it follows that  $N^{+}$ is connected, i.e., $e_{v}^{+} = 1$. See Figure \ref{fig:bordos-exemplo}.

    \begin{figure}[h]
    \centering
    \begin{tikzpicture}

    \node {
            \resizebox{0.9\linewidth}{!}{
            \begin{tabular}{cccccccccc}
                \cellcolor{gray!20} Gluing onto $N^{-}$ & \cellcolor{gray!20} & \cellcolor{gray!20} & \cellcolor{gray!20} Stable part & \cellcolor{gray!20} & \cellcolor{gray!20} & \cellcolor{gray!20} $N^{+}$ identification & \cellcolor{gray!20} & \cellcolor{gray!20} & \cellcolor{gray!20} Lyapunov graph \\
                \hline
			    & & & & & & & & & \\
			    \begin{tikzcd}
                \begin{overpic}[scale=0.6,align=c]{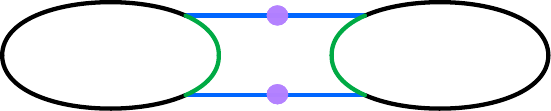}
		        \put(49,26){$\alpha$}
		        \put(49,-10){$\alpha^{'}$}
	            \end{overpic}
                \end{tikzcd} \hspace{0.4cm} \begin{tikzcd}
                \begin{overpic}[scale=0.6,align=c]{Bordo1.pdf}
		        \put(49,26){$\beta$}
		        \put(49,-10){$\beta^{'}$}
	            \end{overpic}
                \end{tikzcd} & & & \begin{tikzcd}
                \begin{overpic}[scale=0.6,align=c]{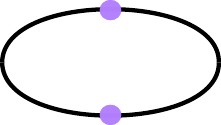}
		        \put(49,68){$\alpha$}
		        \put(49,-22){$\alpha^{'}$}
	            \end{overpic}
                \end{tikzcd} \hspace{0.4cm} \begin{tikzcd}
                \begin{overpic}[scale=0.6,align=c]{Bordo3.pdf}
		        \put(49,68){$\beta$}
		        \put(49,-22){$\beta^{'}$}
	            \end{overpic}
                \end{tikzcd} & & & 
			    \begin{tikzcd}
                \begin{overpic}[scale=0.6,align=c]{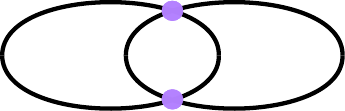}
		        \put(35,42){$\alpha \sim \beta$}
		        \put(35,-18){$\alpha^{'} \sim \beta^{'}$}
	            \end{overpic}
                \end{tikzcd} & & & \includegraphics[scale=0.6,align=c]{dsss7.png}\\
			    & & & & & & & & & \\
            \end{tabular}}
            
        };
    \end{tikzpicture}
    \caption{Boundary  $N^{-}$ and $N^{+}$ of an isolating block for $p \in \mathcal{H}^{\mathcal{D}}_{ss_{s}}$ with $e_{v}^{-} = 4$}
    \label{fig:bordos-exemplo}
\end{figure}

    Therefore, no  minimal GS isolating block for  $p \in \mathcal{H}^{\mathcal{D}}_{ss_{s}}$ admits a Lyapunov semi-graph with $e_{v}^{+} = 2$, $e_{v}^{-} = 4$.

The proof of items  $b)$, $c)$, $d)$ and  $e)$ is similar.
\end{proof}

In Theorem \ref{teoPH}, the set of Lyapunov semi-graphs with a single vertex which satisfies the Poncaré-Hopf condition for Conley indices that correspond to the Conley indices of GS singularities, were presented. Now, by Theorem \ref{teo:colecao} together with Theorem \ref{teo:combinatorial}, we have realized all possible  Lyapunov semi-graphs with minimal weight  for GS singularities as minimal GS isolating blocks.

This classification implies the following corollary.

\newtheorem{Cor}{Corollary}

\begin{Cor}
There are, up to homeomorphism and flow reversal,  $33$ minimal GS isolating blocks.
\end{Cor}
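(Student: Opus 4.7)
The plan is to exploit the classification already established by Theorem \ref{teo:colecao} (which enumerates, for each Lyapunov semi-graph satisfying the Poincaré-Hopf condition with minimal weights, all admissible triples $(N, N^-, N^+)$ in terms of distinguished branched $1$-manifolds) together with Proposition \ref{proposicao} (which handles attractors and repellers) and Theorem \ref{teo:combinatorial} (which excludes non-realizable semi-graphs). The corollary is then a bookkeeping exercise: go through each GS singularity type and nature, count the realizations appearing in Tables \ref{tab:blocoA} and \ref{tab:bordos-colecao}, and identify the blocks obtained by reversing the flow.

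First I would handle the attracting/repelling case: by Proposition \ref{proposicao}, each of the five types $\mathcal{R}, \mathcal{C}, \mathcal{W}, \mathcal{D}, \mathcal{T}$ with attracting nature admits a unique minimal block, and the repelling version is its flow-reversal, contributing $5$ blocks up to homeomorphism and flow reversal. Next I would treat the saddle natures stratum by stratum: for $\mathcal{H}^{\mathcal{R}}_{s}$ count the blocks arising from the three semi-graphs (pair of pants and its non-orientable twisted variant on one circle, plus the $e_v^+ + e_v^- = 3$ case, then further distinguish orientable vs non-orientable); for $\mathcal{H}^{\mathcal{C}}_{s}$ count the blocks displayed in Table \ref{NEWconeblocks}; for $\mathcal{H}^{\mathcal{W}}_{s_s}$ count the blocks in Table \ref{NEWwhitneyblocks}; for $\mathcal{H}^{\mathcal{D}}_{sa}$ and $\mathcal{H}^{\mathcal{D}}_{ss_s}$ count the blocks in Tables \ref{doubleSAblocks}, \ref{doubleSAblocks2}, and \ref{doubleSSsblocks}; and for $\mathcal{H}^{\mathcal{T}}_{ssa}$ count the blocks in Tables \ref{tripleblocks1} and \ref{tripleblocks2}. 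At each step I would pair $s_s$ natures with $s_u$ natures, $sa$ with $sr$, $ss_s$ with $ss_u$, and $ssa$ with $ssr$ via flow reversal, counting each pair once.

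The delicate point — and what I expect to be the main obstacle — is the interaction between two sources of multiplicity. First, a single semi-graph in Table \ref{tab:bordos-colecao} may carry an edge decorated with several alternative distinguished branched $1$-manifolds, and the statement immediately after the theorem records that \emph{every} combination represents a minimal block, so each such semi-graph contributes a product of choices rather than a single block. Second, some pairs of configurations may coincide under homeomorphism even though they come from different semi-graphs or different choices (for example, the two embeddings producing an orientable vs non-orientable block on the same semi-graph for $\mathcal{H}^{\mathcal{R}}_{s}$). I would therefore construct a table matching each entry in Table \ref{tab:bordos-colecao} against its product of branched-$1$-manifold choices, remove the cases excluded by Theorem \ref{teo:combinatorial}, and then partition the resulting list by the $\mathbb{Z}/2$ action given by flow reversal.

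Finally, I would add the contributions: $5$ from attracting/repelling singularities, the counts from each saddle-type stratum above, and verify the total equals $33$. Since the list is finite and explicitly drawn in the cited tables, the conclusion follows immediately once the accounting is carried out carefully; no further dynamical input is needed beyond what Theorems \ref{teo:colecao} and \ref{teo:combinatorial} already provide.
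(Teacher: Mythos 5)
Your proposal is correct and takes essentially the same route as the paper: the paper's proof likewise invokes Theorem \ref{teo:combinatorial} to certify that the collection of semi-graphs in Theorem \ref{teo:colecao} (Table \ref{tab:bordos-colecao}) is exhaustive, and then reads off the per-type counts $3, 3, 3, 13, 11$ summing to $33$. Your version merely makes the bookkeeping more explicit about the two sources of multiplicity and the flow-reversal pairing.
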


\begin{proof}
Theorem \ref{teo:combinatorial} implies that the collection of  Lyapunov semi-graphs of Theorem \ref{teo:colecao} (Figura \ref{tab:bordos-colecao}) contains all semi-graphs that are realizable as minimal GS isolating blocks. On the other hand, Theorem  \ref{teo:colecao} implies that, up to homeomorphims and flow reversal, the total number of minimal GS isolating blocks for singularities of types  $\mathcal{R}, \mathcal{C}, \mathcal{W}, \mathcal{D}$ and  $\mathcal{T}$ are, respectively, $3, 3, 3, 13$ and $11$, totalizing $33$ blocks.
\end{proof}

In the next section, the collection of   Lyapunov semi-graphs  with a single vertex labelled with the Conley indices of a GS singularity and which  satisfies the Poincaré-Hopf condition for GS flows will be analyzed   by removing the minimality weight condition on the edges. In other words, arbitrary weights on the edges  will be allowed as long as the  Poincaré-Hopf condition  is satisfied.

\subsection{GS Isolating Blocks with  passageway}\label{sec:passageway}

Let $X_{t}$ be a GS flow associated to a vector field  $\mathbf{X} \in \Sigma^{r}_{0}(\mathbf{M})$ such that $p$ is a singularity of  $X_{t}$ and  $N$  is an isolating block for $p$. Note that each branched chart on the entering boundary of $N^+$ (resp., exiting boundary $N^{-}$)  represents a fold within the isolating block.
In terms of the  Lyapunov semi-graph, 
$ b_{i}^{+} - 1$  (resp., $b_{i}^{-} - 1$)  equals the number of folds that enter (resp. exit) through the corresponding connected component of the block.

Note that in the case of  a minimal GS isolating block  $N$, the total number $F$ of folds that enter and exit $N$ is:

$$
F = \left\{ 
\begin{array}{ll}
0 \ , &   \text{if} \  p \in \mathbf{M}(\mathcal{R})\cup \mathbf{M}(\mathcal{C}) \\
 1 \ , &   \text{if} \   p \in \mathbf{M}(\mathcal{W})\\
 2 \ , &   \text{if} \   p \in \mathbf{M}(\mathcal{D}) \\
  6 \ , &  \text{if} \  p \in \mathbf{M}(\mathcal{T})
\end{array}
\right.
$$

Furthermore, the  $\omega$-limit (resp. $\alpha$-limit) of all folds that enter  (resp., exit) through  $N^{+}$ (resp. $N^{-}$) is the singularity  $p \in N$. 

\begin{Def}
Let $N$ be a GS isolating block for a singularity  $p$  of a vector field  $\mathbf{X} \in \Sigma^{r}_{0}(\mathbf{M})$.  $N$ is a GS isolating block with  \textbf{passageways} if  there exists at least one  fold in $N$  for  which  $p$ is neither the  $\alpha$-limit nor  $\omega$-limit.  
\end{Def}

As a direct consequence of the definition,  the next result follows.

\begin{Cor}
Let  $p$ be a singularity of a vector field   $\mathbf{X} \in \Sigma^{r}_{0}(\mathbf{M})$, of attracting  (resp. repelling) nature of type $\mathcal{R}, \mathcal{C}, \mathcal{W}, \mathcal{D}$ or $\mathcal{T}$. Then  $p$  does not admit a GS isolating block with passageways.
\end{Cor}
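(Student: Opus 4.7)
The plan is to combine the uniqueness of the minimal isolating block for attracting (resp.\ repelling) singularities, established in Proposition~\ref{proposicao}, with the local structure of the folds inside the handle $\mathcal{H}^{\mathcal{P}}_{a}$ (resp.\ $\mathcal{H}^{\mathcal{P}}_{r}$). First I would use Proposition~\ref{proposicao} to reduce to analyzing a single isolating block, namely the one homeomorphic to $\mathcal{H}^{\mathcal{P}}_{a}$ (with $N^{-}=\emptyset$) in the attracting case; the repelling case is symmetric after time-reversal, so I only treat the attracting case in detail.

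Next I would split into cases according to the type $\mathcal{P}$. When $\mathcal{P} = \mathcal{R}$ or $\mathcal{C}$, the local chart prescribed in Definition~\ref{def:localcharts} contains no points of type $\mathcal{D}$, hence $\mathcal{F}(\mathbf{M}) \cap N = \emptyset$ and the defining clause of a passageway (``there exists at least one fold in $N$\ldots'') fails vacuously. When $\mathcal{P} = \mathcal{W}$, $\mathcal{D}$ or $\mathcal{T}$, the singular locus $\mathbf{M}(\mathcal{D}) \cap N$ is a finite union of arcs each of which has $p$ as an endpoint; this can be read off directly from the defining equations in Definition~\ref{def:localcharts} (the self-intersection line for $\mathcal{W}$, the $z$-axis for $\mathcal{D}$, and the three coordinate axes for $\mathcal{T}$) and is also visible in the handle pictures of Table~\ref{tab:blocoA}.

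Then I would invoke the attracting nature of $p$: since $\mathrm{Inv}(N) = \{p\}$ and $N^{-} = \emptyset$, every forward trajectory starting in $N$ remains in $N$ and has $\omega$-limit equal to $p$. The vector field $\mathbf{X}$ is tangent to each stratum, so it is in particular tangent to the one-dimensional stratum $\mathbf{M}(\mathcal{D}) \cap N$; restricting the flow to a fold $F \subset N$ therefore forces $\omega(x) = p$ for every $x \in F$. Consequently every fold in $N$ has $p$ as its $\omega$-limit, and no fold can avoid having $p$ as one of its limit sets, so $N$ is not a GS isolating block with passageways. The repelling case follows by reversing the flow, which interchanges $\alpha$- and $\omega$-limits.

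The only nontrivial point is the coordinate verification that every component of $\mathbf{M}(\mathcal{D}) \cap \mathcal{H}^{\mathcal{P}}_{a}$ actually abuts $p$; once that is in hand, the dynamical half of the argument is essentially immediate from $\mathrm{Inv}(N) = \{p\}$ together with tangency of $\mathbf{X}$ to the stratification. This is why the authors record the statement as a direct consequence of the definition rather than as a standalone theorem.
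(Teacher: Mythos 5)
Your final dynamical observation --- that in an isolating block for an attractor every orbit has $\omega$-limit $p$, so no fold can avoid $p$ as a limit set --- is exactly the paper's one-line proof. The problem is the scaffolding you build around it. Proposition \ref{proposicao} classifies only \emph{minimal} isolating blocks, whereas a block with passageways is by definition non-minimal (it carries extra folds), so you cannot invoke it to ``reduce to the single block $\mathcal{H}^{\mathcal{P}}_{a}$ with $N^{-}=\emptyset$.'' Indeed, $N^{-}=\emptyset$ is precisely the premise that would fail if a passageway existed: the orbits through a passageway fold enter via $N^{+}$ and leave via $N^{-}$. As written, your argument establishes only that the minimal block admits no passageways, which is not the statement being proved.

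The case analysis inherits the same flaw: for $\mathcal{P}=\mathcal{R}$ or $\mathcal{C}$ you argue the passageway condition fails vacuously because the local chart contains no points of type $\mathcal{D}$, but an isolating block is a neighbourhood in $\mathbf{M}$, not the chart, and non-minimal blocks for regular and cone singularities do contain folds (Table \ref{tabela:Passageway} exhibits cone blocks with passageways). The repair is either the paper's direct assertion --- for an attracting $p$, \emph{any} GS isolating block $N$ for $p$ has all of its orbits $\omega$-limiting to $p$, hence in particular all of its folds --- or, if you insist on routing through minimal blocks, Lemma \ref{lem:passageway_minimal}: every block with $k\ge 1$ passageways arises from a minimal block by identifying pairs of orbits $\gamma_{i}$ with $p\notin\{\alpha(\gamma_{i}),\omega(\gamma_{i})\}$, and for an attractor the minimal block $\mathcal{H}^{\mathcal{P}}_{a}$ contains no such orbit. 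In either route your coordinate verification that the folds of the chart abut $p$ is unnecessary.
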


\begin{proof}
Note that if  $p$ is of attracting  (resp. repelling) nature and  $N$ is an isolating block for  $p$, then  $p$ is  $\omega$-limit (resp. $\alpha$-limit) of all orbits in $N$. 
\end{proof}

%A seguir, mostramos como construir blocos isolantes GS com passageway a partir de blocos isolantes GS minimais.

\begin{Cor}\label{cor:construcao_passageway}
Given $N$  a minimal GS isolating block for $p \in \mathcal{H}^{\mathcal{P}}_{\eta}$, let $\Gamma = \{(\gamma_{1},\gamma_{2}), (\gamma_{3},\gamma_{4}), \ldots, (\gamma_{2k-1},\gamma_{2k})\}$ be a collection of pairs of orbits in the regular part of $N$, with $\gamma_{i}\neq\gamma_{j}$ if $i\neq j$,  and such that  $p$ is not the  $\alpha$-limit nor the  $\omega$-limit of any $\gamma_{i}$ in  $\Gamma$. Then the quotient space  $N/\sim$  obtained by identifying each pair of orbits  $\gamma_{2i-1}\sim\gamma_{2i}$, for $i=1,\ldots,k$ is an isolating block for  $p \in \mathcal{H}^{\mathcal{P}}_{\eta}$ with $k$ passageways.
\end{Cor}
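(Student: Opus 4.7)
The plan is to verify that the quotient $N/\sim$ retains the structure of a GS isolating block for $p$ and to count its folds so as to identify exactly $k$ passageways. I will break this into three steps: realization of the identifications as $\mathcal{D}$-type charts, verification of the isolating block axioms for $N/\sim$, and the fold/passageway count.

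First, I would argue that each identification $\gamma_{2i-1}\sim\gamma_{2i}$ introduces a local $\mathcal{D}$-chart along the identified orbit. Since $\gamma_{2i-1}$ and $\gamma_{2i}$ are orbits in the regular part $\mathbf{M}(\mathcal{R})\cap N$, a flow-box neighborhood of each is a tubular flow, i.e.\ the product of an interval with a transverse arc carrying a $C^\infty$-trivial GS flow. Gluing two disjoint tubular flows along their central orbit yields precisely the local model $\mathcal{D} = \{xy=0\}$ with a tubular GS flow; this is the canonical way folds are produced in the constructions of Section~\ref{sec:construcao}, and is also how the $\mathcal{H}^{\mathcal{D}}$ handles were described via identifications of two $\mathcal{H}^{\mathcal{R}}$ handles. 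Because $\gamma_{i}\neq \gamma_{j}$ for $i\neq j$, the identifications are pairwise disjoint, so each pair contributes one fresh fold to $N/\sim$ and the type of singularity at $p$ is unchanged.

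Second, I would check the three defining properties of an isolating block for $N/\sim$. The quotient map $q:N\to N/\sim$ is the identity outside a saturated neighborhood of $\bigcup_i(\gamma_{2i-1}\cup\gamma_{2i})$, which is disjoint from $\{p\}$ since $p$ is neither the $\alpha$-limit nor the $\omega$-limit of any $\gamma_i$. Consequently $p$ remains an isolated invariant set of the induced flow on $N/\sim$, and no new invariant set is created inside $N/\sim$ (any full orbit in $N/\sim$ lifts to a full orbit in $N$ by the regularity of the identifications, so $\mathrm{Inv}(N/\sim)=q(\mathrm{Inv}(N))=\{p\}$). The transversality of the flow to $\partial(N/\sim)=q(\partial N)$ is inherited from the transversality on $\partial N$ because the identifications take place strictly inside $N$, away from its boundary (each $\gamma_i$ is an orbit in the interior); the entering and exiting sets remain closed because $q$ is a closed map restricted to $\partial N$.

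Third, I would count the folds and verify that the extra folds are passageways. By the $\mathcal{D}$-chart picture above, $\mathcal{F}(N/\sim) = q(\mathcal{F}(N))\sqcup \{\,q(\gamma_{2i-1})\,:\,i=1,\ldots,k\}$; the newly created folds are exactly the $k$ images of the identified pairs, while the original folds of $N$ still have $p$ as $\alpha$- or $\omega$-limit (or not), as before. By hypothesis, for each new fold $q(\gamma_{2i-1})$, neither $\alpha(\gamma_{2i-1})=\alpha(\gamma_{2i})$ nor $\omega(\gamma_{2i-1})=\omega(\gamma_{2i})$ equals $p$, so each of these $k$ folds is a passageway in the sense of Definition above. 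Hence $N/\sim$ is a GS isolating block for $p$ with exactly $k$ passageways.

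The main obstacle I anticipate is the precise verification that $\mathrm{Inv}(N/\sim)=\{p\}$ and that no periodic orbit or singular cycle is created by the identifications. This reduces to checking that the quotient identifications do not merge the forward orbit of one $\gamma_i$ with the backward orbit of another in a way that produces a new bounded invariant set; because each $\gamma_i$ is a regular orbit whose $\alpha$- and $\omega$-limits lie on $\partial N$ (not at $p$), its forward and backward continuations leave $N$ in finite time, and this persists in $N/\sim$. Once that is in hand, the rest of the argument is essentially a local, chart-by-chart verification using the models already displayed in Section~\ref{sec:construcao}.
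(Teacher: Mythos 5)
Your proposal is correct and follows essentially the same route as the paper's proof, which simply observes that $N/\sim$ remains an isolating block for $p$, that the $k$ identifications produce $k$ new folds, and that the hypothesis on the $\alpha$- and $\omega$-limits of the $\gamma_i$ makes each of these folds a passageway. Your version supplies the local $\mathcal{D}$-chart model and the verification of the isolating-block axioms that the paper leaves implicit, but the underlying argument is the same.
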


\begin{proof}
Since  $N$ is an isolating block for $p \in \mathcal{H}^{\mathcal{P}}_{\eta}$, it follows that  $N/\sim$ is also an isolating block for $p$, which has a total of $k$ folds given by the identification of  the pairs of orbits  $\gamma_{2i-1}\sim\gamma_{2i}$, for $i=1,\ldots,k$.  Given that  $p$ is not the  $\alpha$-limit nor  the $\omega$-limit of any  $\gamma_{i}$ in $\Gamma$,  it follows that  $p$ is not the $\alpha$-limit nor the $\omega$-limit of any of the $k$ folds in $N/\sim$. 
\end{proof}

\newtheorem{Ex}{Example}

\begin{Ex}
In Table \ref{tabela:Passageway}, there are three example of GS isolating blocks with  passageways, constructed from a minimal GS isolating block for a cone type singularity with saddle nature. 
 The pairs of orbits identified  in order to obtain each one of the three blocks are, respectively, given by  $\Gamma_{1} = \{(\gamma_{5}, \gamma_{6})\}$, $\Gamma_{2} = \{(\gamma_{1},\gamma_{3}), (\gamma_{2},\gamma_{4}), (\gamma_{5},\gamma_{6})\}$, and $\Gamma_{3} = \{(\gamma_{3},\gamma_{6})\}$.

\begin{table}[!htb]
		\centering
		\resizebox{0.9\linewidth}{!}{
		\begin{tabular}{ccc|ccc}
			 \cellcolor{gray!20} & \cellcolor{gray!20} Minimal isolating block & \cellcolor{gray!20} & \cellcolor{gray!20} & \cellcolor{gray!20} Passageways in isolating blocks & \cellcolor{gray!20}  \\
			 %& & & & & & & & & & & \\
			 \hline
			 & & & & &  \\
			 & & & & &  \\
			 & \begin{overpic}[align=c,scale=0.35]{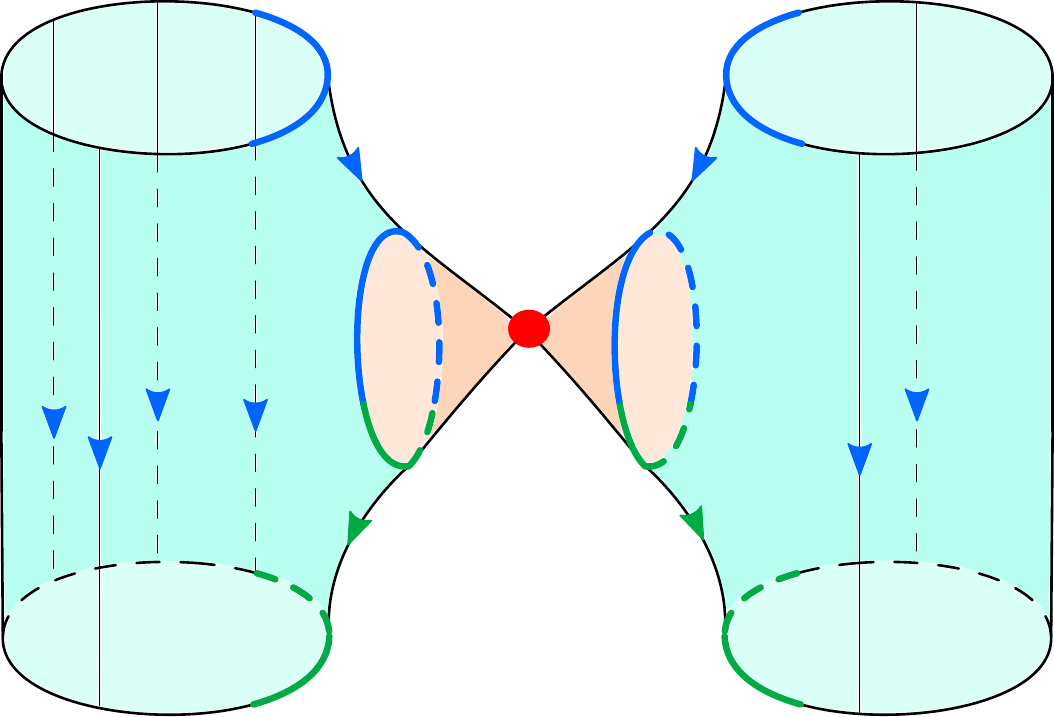}
			 \put(5,-8){$\gamma_{1}$}
			 \put(-5,71){$\gamma_{2}$}
			 \put(9,75){$\gamma_{3}$}
			 \put(25,71){$\gamma_{4}$}
			 \put(76,-8){$\gamma_{5}$}
			 \put(86,73){$\gamma_{6}$} \end{overpic} &  &  & \begin{overpic}[align=c,scale=0.28]{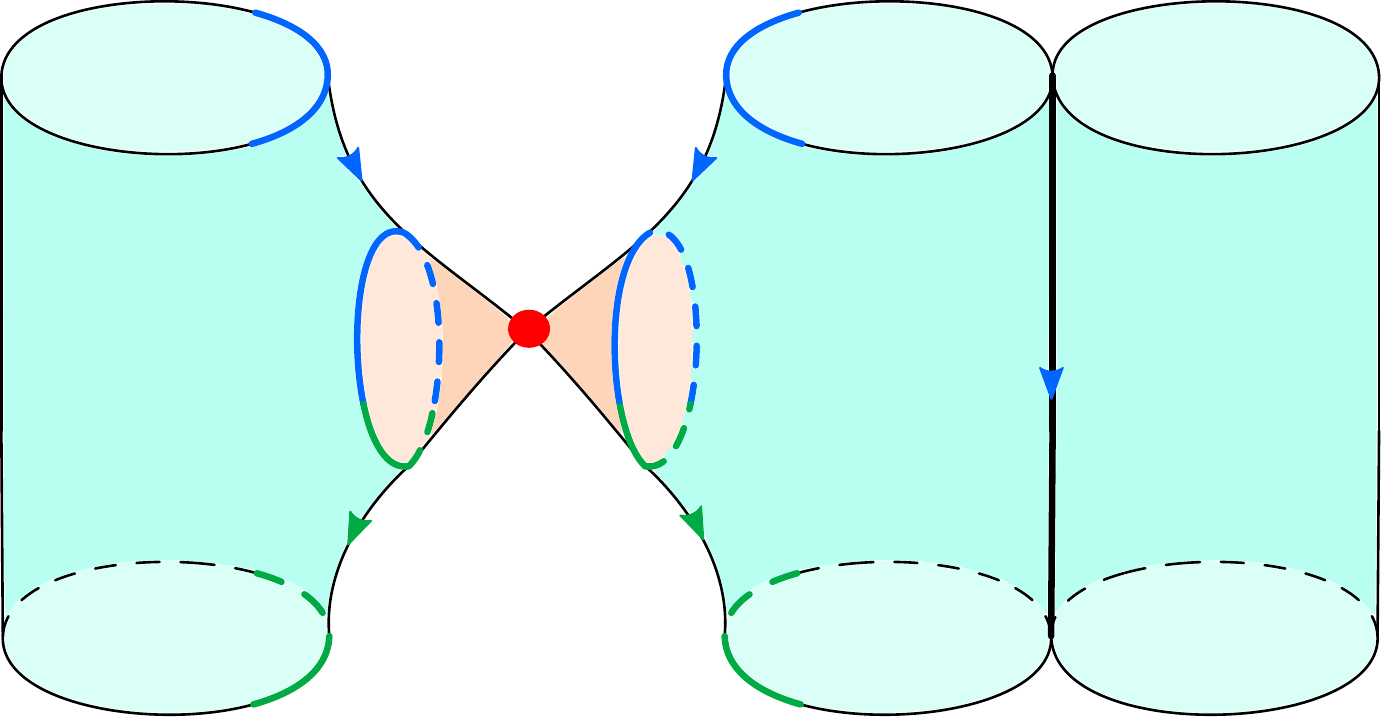}\end{overpic} \hspace{0.8cm} \begin{overpic}[align=c,scale=0.28]{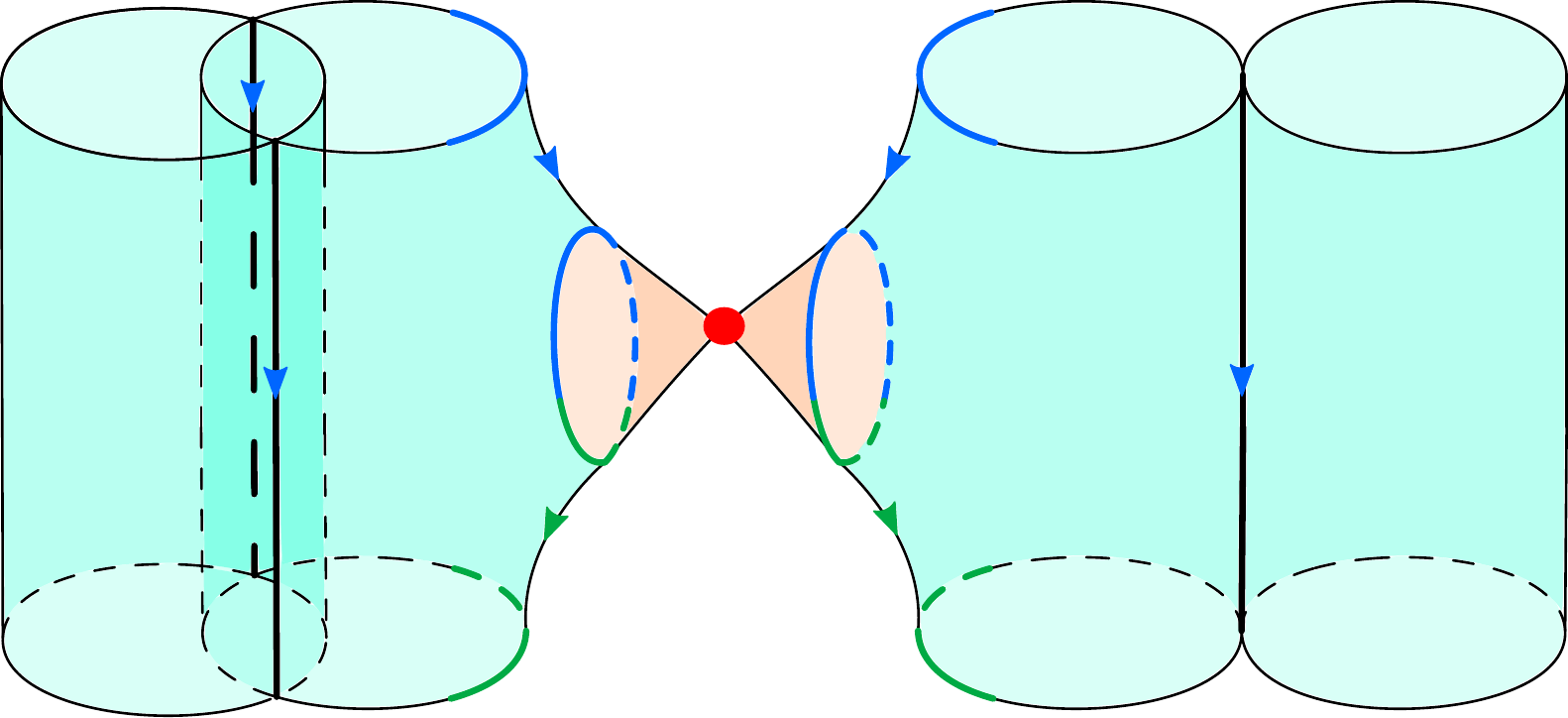}\end{overpic} \hspace{0.8cm} \begin{overpic}[align=c,scale=0.25]{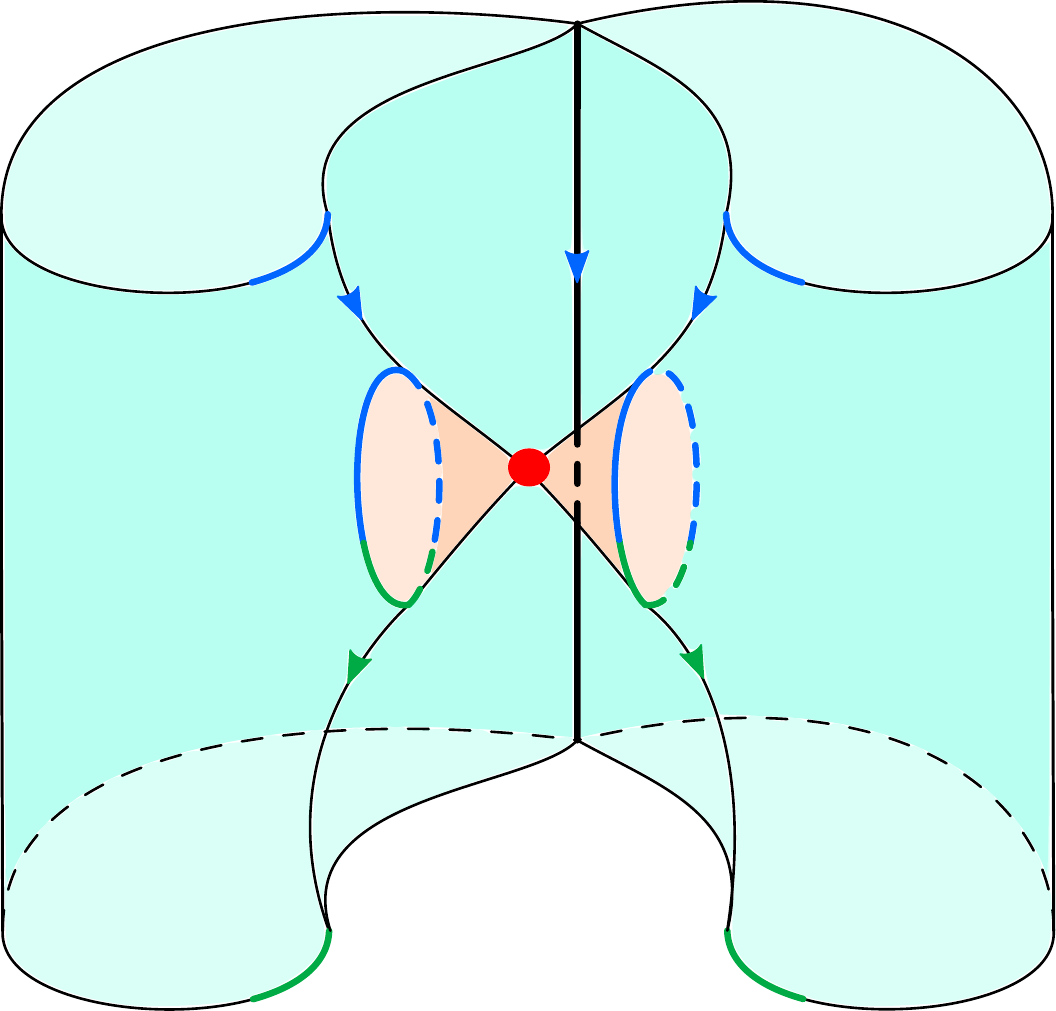}\end{overpic}  &   \\
			 &  &  &  &  &   \\
			 & & & & &  \\
		     %\hline
		\end{tabular}}
		%\mbox{ \ \ \ \ í­ndice do Ponto Regular}
		\caption{Examples of GS isolating blocks with passageways for  $p \in \mathcal{H}^{\mathcal{C}}_{s}$}
		\label{tabela:Passageway}
	\end{table}

\end{Ex}

Corollary \ref{cor:construcao_passageway} provides a procedure to construct GS isolating blocks  with passageways from a minimal GS isolating block. On the other hand, the next lemma shows that any GS isolating block  with passageways can be obtained by this method.

\newtheorem{AF}{Afirmação}

\begin{Lem}\label{lem:passageway_minimal}
Let $N$ be a GS isolating block with $k$ passageways for $p \in \mathcal{H}^{\mathcal{P}}_{\eta}$. Then there exists a minimal GS isolating block  $N_{0}$ for $p \in \mathcal{H}^{\mathcal{P}}_{\eta}$, and a collection  $\Gamma = \{(\gamma_{1},\gamma_{2}), (\gamma_{3},\gamma_{4}), \ldots, (\gamma_{2k-1},\gamma_{2k})\}$  of pairs of regular orbits in  $N_{0}$, with the properties that $\gamma_{i}\neq\gamma_{j}$ if $i\neq j$, and $p$ is neither the $\alpha$-limit nor the $\omega$-limit of any  $\gamma_{i}$ in $\Gamma$, such that the quotient space  $N_{0}/\sim$, obtained by the identification of each pair of orbits $\gamma_{2i-1}\sim\gamma_{2i}$, for $i=1, \ldots, k$, is homeomorphic to $N$.
\end{Lem}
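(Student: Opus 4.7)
The plan is to reverse the construction in Corollary \ref{cor:construcao_passageway}. Let $F_1, \ldots, F_k \subset N$ denote the $k$ passageway folds, i.e.\ the folds in $N$ for which $p$ is neither the $\alpha$-limit nor the $\omega$-limit. Define $N_0$ to be the space obtained from $N$ by separating, along each $F_i$, the two sheets of the local double-crossing chart $\mathcal{D}$ that contains $F_i$. Each cut replaces the fold orbit $F_i \subset \mathbf{M}(\mathcal{D})$ by a pair of disjoint regular orbits $\gamma_{2i-1}$, $\gamma_{2i}$, one on each sheet, producing a pairwise disjoint collection $\Gamma = \{(\gamma_1, \gamma_2), \ldots, (\gamma_{2k-1}, \gamma_{2k})\}$ of orbits in the regular part of $N_0$.

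First I would verify that $X_t$ induces a GS flow on $N_0$ and that the quotient obtained by reidentifying $\gamma_{2i-1} \sim \gamma_{2i}$ for $i=1,\ldots,k$ is homeomorphic to $N$ via an identification that conjugates the two flows. Away from the cuts the flow is unchanged; locally at each cut the model $\mathcal{D}$ is simply split into two copies of $\mathcal{R}$, each carrying a tubular piece of the flow, so transversality of the flow to $\partial N_0$, as well as the absence of singular cycles and periodic orbits inherited from $\mathbf{X} \in \Sigma^r_0(\mathbf{M})$, is preserved. Next I would check, using the hypothesis that $p$ is not a limit of any point of $F_i$, that the same holds for every point of $\gamma_{2i-1}$ and $\gamma_{2i}$, so that $\Gamma$ satisfies the hypotheses of Corollary \ref{cor:construcao_passageway} and the inverse identification recovers $N$ from $(N_0, \Gamma)$.

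Then I would establish that $N_0$ is \emph{minimal}. The entering and exiting sets $N_0^{\pm}$ are obtained from $N^{\pm}$ by replacing each branch point that comes from a passageway fold by two disjoint transverse points, one on each newly separated arc. Consequently, every remaining branch point of $N_0^{\pm}$ corresponds to a fold of $N_0$ whose $\alpha$- or $\omega$-limit is $p$, and these folds are precisely those coming from the attaching region of the handle $\mathcal{H}^{\mathcal{P}}_{\eta}$. Since the number of branched charts in the unstable part of $\partial \mathcal{H}^{\mathcal{P}}_{\eta}$ is by definition this number, $N_0^-$ has the minimal number of branched charts; hence $N_0$ is a minimal GS isolating block for $p$ and, up to homeomorphism, one of the blocks classified in Theorem \ref{teo:colecao}.

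The main obstacle I anticipate is making the cutting procedure rigorous at points where a passageway fold $F_i$ approaches $\partial N$ or meets another fold or singular chart. One must check that separating the two $\mathcal{D}$-sheets along $F_i$ respects the GS structure near the endpoints of $F_i$ on $\partial N$, and that the $k$ cuts can be performed simultaneously without interfering with one another; this reduces to a local analysis on a tubular neighbourhood of each $F_i$ and of each branch point of $\partial N$. Once this local compatibility is settled, the counting argument for minimality and the quotient argument for recovering $N$ are essentially formal and the lemma follows.
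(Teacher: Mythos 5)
Your proposal is correct and takes essentially the same route as the paper: the paper likewise unzips each passageway fold $F_{i}$ (by deleting an $\epsilon$-neighbourhood $U_{i}$ and re-identifying the four resulting boundary arcs sheet-wise into the two regular orbits $\gamma_{2i-1},\gamma_{2i}$), concludes minimality of $N_{0}$ from the fact that all folds not having $p$ as a limit have been removed, and recovers $N$ by re-identifying $\gamma_{2i-1}\sim\gamma_{2i}$. The local compatibility issues you flag at the end are not treated in the paper, which performs the surgery directly.
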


\begin{proof}
Given a GS isolating block $N$ with  $k$ passageways for $p \in \mathcal{H}^{\mathcal{P}}_{\eta}$, denote by $F_{1}, \ldots, F_{k}$ the $k$ folds in $N$  for which  $p$ is neither  the $\alpha$-limit nor the $\omega$-limit. A minimal GS isolating block $N_0$ for  $p \in \mathcal{H}^{\mathcal{P}}_{\eta}$ is  constructed as follows. For  each $i=1, \ldots, k$: 

\begin{itemize}
\item[i) ] Consider ${N - U_{i}}$ where  $U_{i}$ is an  $\epsilon$-neighborhood  of  $F_{i}$ in $N$. Then $\overline{\partial (N - U_{i}) - (\partial N \cap ({N - U_{i}}) )}$  is the union of four disjoint discs $D^1_{i1}$, $D^1_{i2}$, $D^1_{i3}$ and $D^1_{i4}$. 

\item[ii) ] Identify  the pair of discs in ${N - U_{i}}$,  say $\gamma_{2i-1} = D^1_{i1} \sim D^1_{i3}$  and  $\gamma_{2i} = D^1_{i2} \sim D^1_{i4}$, so that the resulting block has the same number of connected components as $N$.

\end{itemize}

\begin{figure}[htb]
\centering
\begin{overpic}[scale=0.4,align=c]{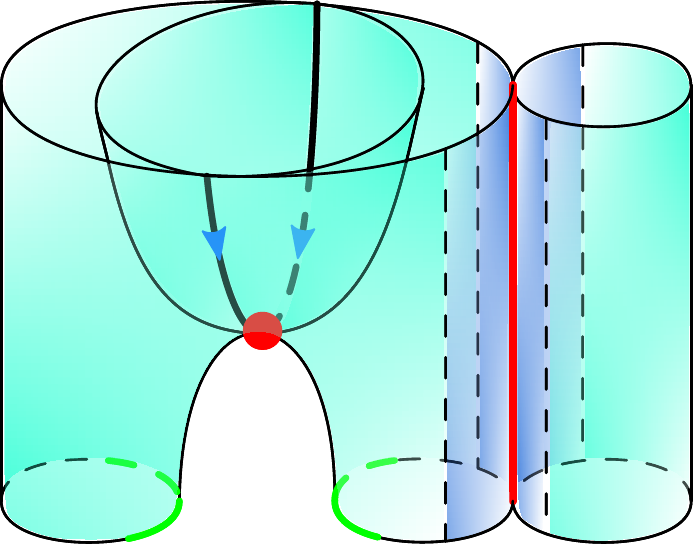}
\put(34,15){\textcolor{red}{$p$}}
\put(68,-12){\textcolor{red}{$f_{i}$}}
\put(68,85){\textcolor{blue}{$U_{i}$}} \end{overpic} \hspace{0.6cm} \begin{overpic}[scale=0.4,align=c]{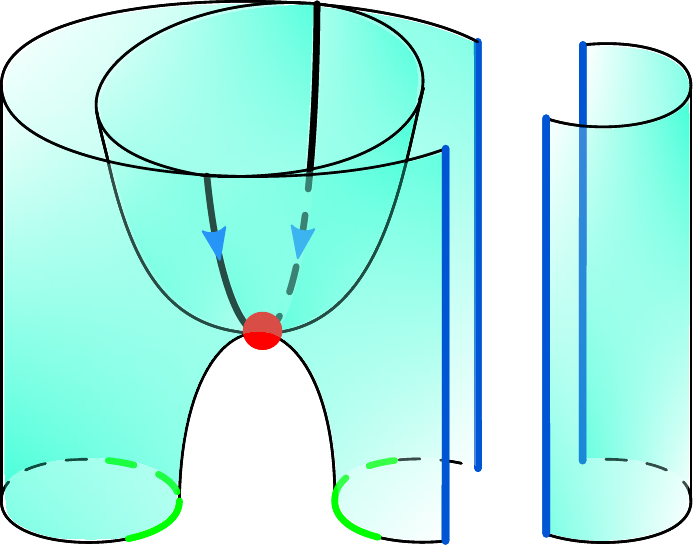}
\put(52,-16){\textcolor{blue}{$D_{i1}^{1}$}}
\put(76,-16){\textcolor{blue}{$D_{i3}^{1}$}}
\put(52,85){\textcolor{blue}{$D_{i2}^{1}$}}
\put(80,85){\textcolor{blue}{$D_{i4}^{1}$}} \end{overpic} \hspace{0.6cm} \begin{overpic}[scale=0.4,align=c]{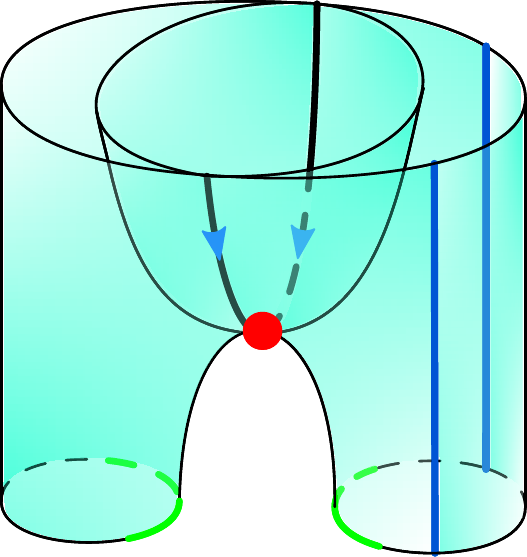} 
\put(65,-16){\textcolor{blue}{$\gamma_{2i-1}$}}
\put(76,105){\textcolor{blue}{$\gamma_{2i}$}} \end{overpic} \vspace{0.5cm}
\caption{Unzipping passageways }\label{fig:patches}
\end{figure}

 Since we have removed all the folds $F_1,\dots,F_k$ in this process, the resulting block $N_{0}$  is a minimal GS isolating block for $p \in \mathcal{H}^{\mathcal{P}}_{\eta}$.% with the same  Lyapunov semi-graph of $N$ up to the weights on the edges.

Finally, note that  $\Gamma = \{(\gamma_{1},\gamma_{2}), (\gamma_{3},\gamma_{4}), \ldots, (\gamma_{2k-1},\gamma_{2k})\}$   is a collection of pairs of regular orbits in $N_{0}$, such that  $p$ is neither the $\alpha$-limit nor the $\omega$-limit of any  $\gamma_{i}$ in $\Gamma$.
 Hence, the identifications $\gamma_{2i-1}\sim\gamma_{2i}$, for $i=1,\ldots,k$, imply that $N_{0}/\sim$ is homeomorphic to  $N$.  
\end{proof}

One can ask whether an abstract Lyapunov semi-graph  with a single vertex $v$ labelled with the numerical Conley indices of a GS singularity and arbitrary weights satisfying the Poincaré-Hopf condition is realizable or not as a GS flow on an isolating block.

The following Theorem presents necessary and sufficient conditions for a positive answer to this question. So the following theorem  generalizes Theorem \ref{teo:realizationminimalsemi}.

\begin{Teo}[Local realization characterization]\label{teo:passageway_cone}
A Lyapunov semi-graph $L_{v}$ with a single vertex $v$ labelled with a GS singularity is associated to a GS flow on a  GS isolating block $N$ if and only if:
\begin{itemize}
    \item[i) ] the Poincaré-Hopf condition is satisfied;

    \item[ii) ]  if $v$ is  labelled with a singularity of type $\mathcal{C}$ and nature $s$, with $e_{v}^{-} = e_{v}^{+} = 2$, then $b_{1}^{+}$ is equal to either $b_{1}^{-}$ or $b_{2}^{-}$;
    
    \item[iii) ]   if $v$ is  labelled with a singularity of type $\mathcal{D}$ and nature $ss_{s}$ (resp. $ss_{u}$), with $e_{v}^{+} = 2 $, (resp., $e_{v}^{-} = 2 $) then $e_{v}^{-} \leq 2$ (resp., $e_{v}^{+} \leq 2$). Moreover, if $L$ is minimal then $b_1^+ = b_2^+$ (resp., $b_1^{-} = b_2^{-}$).
    
\item[iv) ]  If $v$ is labelled with a singularity of type $\mathcal{T}$ with $e^{-}_{v}=2$ (resp., $e^{+}_{v}=2$ ) and $L$ is minimal then  $b_{1}^{-} = b_{2}^{-}$ (resp., $b_{1}^{+} = b_{2}^{+}$).
\end{itemize}   
\end{Teo}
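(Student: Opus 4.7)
The plan is to reduce the theorem to Theorem \ref{teo:realizationminimalsemi} via the passageway dictionary set up by Corollary \ref{cor:construcao_passageway} together with its converse Lemma \ref{lem:passageway_minimal}. Together these results identify any GS isolating block $N$ with a pair $(N_0,\Gamma)$ consisting of a minimal isolating block $N_0$ and a collection of pairs of regular orbits in $N_0$ whose identification recovers $N$. At the level of Lyapunov semi-graphs, the identification of a single pair of orbits has one of two combinatorial effects on each of $N^+$ and $N^-$: if both endpoints of the pair lie in the same connected component, the first Betti number of that component increases by one; if they lie in different components, those components merge into one with combined first Betti number. A direct check confirms that each such operation preserves the Poincaré-Hopf identity (\ref{PH}), so condition (i) is automatic from $N$ being an isolating block.

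For the necessity direction, I would assume $L_v$ is realized by an isolating block $N$, apply Lemma \ref{lem:passageway_minimal} to obtain a minimal block $N_0$ whose semi-graph $L_v^0$ satisfies Theorem \ref{teo:realizationminimalsemi}, and verify (ii)--(iv) by case analysis from Table \ref{tab:bordos-colecao}. For (ii), the minimal block for $p \in \mathcal{H}^{\mathcal{C}}_s$ with $e_v^\pm = 2$ consists of two disjoint cylinders, one per cone sheet; hence any passageway preserving $e_v^\pm = 2$ must identify two orbits on the same sheet, simultaneously incrementing by one the first Betti number of a matched pair of circles, one in $N^+$ and one in $N^-$. Thus $\{b_1^+, b_2^+\}$ and $\{b_1^-, b_2^-\}$ coincide as multisets, which is exactly the condition $b_1^+ \in \{b_1^-, b_2^-\}$. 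For (iii) and (iv), the bounds $e_v^\pm \leq 2$ are already contained in Theorem \ref{teo:combinatorial}, and the equalities of paired weights in the minimal case are restatements of Theorem \ref{teo:realizationminimalsemi}(b),(c) applied to $L_v$ itself (since minimal $L_v$ means $L_v = L_v^0$).

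For the sufficiency direction, given $L_v$ satisfying (i)--(iv), I would first strip down to a minimal semi-graph $L_v^0$ by subtracting from each $b_i^\pm$ the excess over the minimal values prescribed by Table \ref{tab:bordos-colecao}, invoking Theorem \ref{teo:realizationminimalsemi} to realize $L_v^0$ as a minimal block $N_0$. Conditions (ii)--(iv) ensure that this excess is distributed in a manner realizable by passageways of the component-preserving type, with the appropriate choice of orbit pairs on matching sheets or components. Corollary \ref{cor:construcao_passageway} then installs the required identifications in $N_0$ to produce a block whose semi-graph is $L_v$.

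The hardest part will be the case analysis underlying the sufficiency direction, specifically the verification that (ii)--(iv) are precisely the conditions that rule out a mismatched distribution of excess weight. This hinges on enumerating, for each minimal block in Table \ref{tab:bordos-colecao}, the admissible pairs of regular orbits and their effect on $(e_v^\pm, b_i^\pm)$. Once this book-keeping is fixed, both implications reduce to a translation between the combinatorics of passageway identifications and the semi-graph weights.
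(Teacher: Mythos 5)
Your proposal follows essentially the same route as the paper: necessity via Lemma \ref{lem:passageway_minimal}, which reduces an arbitrary block to a minimal one governed by Theorems \ref{teo:colecao} and \ref{teo:combinatorial}, and sufficiency by first realizing the associated minimal semi-graph and then installing passageways whose orbits enter and exit through the same boundary components so that weights increase without changing $e_{v}^{\pm}$. The only cosmetic differences are that the paper obtains condition (i) directly from Theorem \ref{teoPH} applied to the index pair $(N,N^{-})$ rather than by checking that each orbit identification preserves the Poincaré--Hopf identity, and that the minimal cone-saddle block with $e_{v}^{+}=e_{v}^{-}=2$ consists of two cylinders meeting at the cone point rather than two disjoint cylinders (which does not affect your argument that each passageway must stay on a single sheet).
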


\begin{proof}

$(\Rightarrow)$ First, suppose $L$ is associated to a GS flow on a GS isolating block $N$. Then, we have that:

\begin{itemize}

\item[i) ] $(N,N^{-})$ is an index pair for the singularity $p$ in $N$. And it follows from Theorem \ref{PH} that $L$ satisfies the Poincaré-Hopf condition.    

\item[ii) ]  If  $L$  is minimal, the condition is trivially satisfied, since  $b_{1}^{+} = b_{1}^{-} = b_{2}^{-} = 1$. In this case, $N$ is a minimal GS isolating block given by two cylinders intersecting  in a single point which is the cone singularity, as shown in Table  \ref{NEWconeblocks}.      

If $L$ is not minimal, then $N$ is a GS isolating block with passageways. Thus, Lemma \ref{lem:passageway_minimal} states that $N$ is obtained from the identification of pairs of orbits in a minimal GS isolating block $N_{0}$, such that these orbits do not intersect the singular part of $N_{0}$. So, each pair of orbits must enter and exit $N_{0}$ through the same cylinder, that is, the number of folds that enter and the number of folds that exit each cylinder  of $N_{0}$ must be the same. Hence the number  $b_{1}^{+} - 1$ must be equal to $b_{1}^{-} - 1$ or $b_{2}^{-} - 1$. Therefore,  $b_{1}^{+}$ is equal to  $b_{1}^{-}$ or $b_{2}^{-}$.

\item[iii) ] Suppose $v$ is labelled with a singularity of type $\mathcal{D}$ and nature $ss_{s}$, with $e_{v}^{+} = 2$. In addition, if $N$ is minimal, Theorem \ref{teo:combinatorial} asserts that  the exiting set of $N$ has at most two connected components, that is, $e_{v}^{-} \leq 2$. On the other hand, if $N$ is not minimal, Lemma \ref{lem:passageway_minimal} implies that $N$ can be obtained via the identification of pairs of orbits on a minimal isolating block $N_{0}$. Since these identifications may lower the number of connected components of $N_{0}$, it follows that the exiting set of $N$ also has at most two connected components, i.e, we also have $e_{v}^{-} \leq 2$ in this case. Similarly, we prove the condition for a singularity of type $\mathcal{D}$ and nature $ss_{u}$.

\item[iv) ]  This is a direct consequence of item $e)$ of Theorem \ref{teo:combinatorial}.  

\end{itemize}

$(\Leftarrow)$ Now, suppose $L$ satisfies conditions $i)$ through $iv)$. If $L$ is minimal, then $L$ can be realized with the distinguished branched $1$-manifolds of Table \ref{tab:bordos-colecao}, as shown in Theorem \ref{teo:colecao}. On the other hand, if $L$ is not minimal, consider the minimal semi-graph $\ell$ with the same labellings, indegree and outdegree as $L$. As before, we can realize $\ell$ by a minimal GS isolating block $N_{0}$. Then, we can identify a pair of orbits that enter and exit $N_{0}$ through the same connected components of the boundary of $N_{0}$. This identification increases the weights of the respective boundary components of $N_{0}$ by one, while preserving the number of connected components of $N_{0}^{+}$ and $N_{0}^{-}$. Hence, such identification of orbits of $N_{0}$ results in a GS isolating block $N_{1}$ which realizes a semi-graph with same labellings, indegree and outdegree as $\ell$. Therefore, after performing a finite number $k$ of identifications so that the weights on each boundary component of the resulting block $N_{k}$ are equal to the weights $b_{i}^{\pm}$ of $L$, we have that $N_{k}$ is a realization of $L$, proving that $L$ is associated to a GS flow on a GS isolating block.

\end{proof}

This concludes the characterization of all Lyapunov semi-graphs labelled with numerical Conley indices of GS singularities
and satisfying the Poincaré-Hopf condition. We will denote these semi-graphs as \textbf{GS semi-graphs}.

\begin{Def}\label{def:grafoGS}
%A Lyapunov semi-graph $L$ that satisfies   Theorem \ref{teo:passageway_cone}  for all vertices $v$ of $L$ will be called  \textbf{GS semi-graphs}. 
%Semi-graphs that satisfy Theorem \ref{teo:colecao} (Table \ref{fig:bordos-colecao}) and their orientation reverse counterpart  with any choice of weights and degrees that satisfy Theorem \ref{teo:passageway_cone} will be called  \textbf{GS semi-graphs}. 
In the case that the weights on the edges of a GS semi-graph are the smallest satisfying Poincaré-Hopf it will  be called a 
\textbf{minimal GS semi-graph}. Hence, we say that a 
 Lyapunov graph $L$ is a  \textbf{GS graph}  (resp.,  \textbf{minimal GS graph}) if for all vertices  $v$ of $L$ the semi-graph formed by $v$ and its incident edges is a GS  semi-graph (resp., \textbf{minimal GS semi-graph}). 
\end{Def}

In the next section, we investigate under what conditions a GS graph is realizable as a GS flow on a closed $2$-dimensional singular manifold.

\section{Realization of abstract GS graphs as singular flows}\label{sec:realization}

In this section we consider the realization of GS graphs as singular flows.

\begin{Def} We say  that a GS graph $L$ is \textbf{realizable} if there exists a triple ($\mathbf{M}$, $X_{t}$,$f$), where $\mathbf{M}$ is a closed GS $2$-manifold, $X_{t}$ is a flow associated to a vector field  $\mathbf{X} \in \Sigma^{r}_{0}(\mathbf{M})$ and $f:\mathbf{M} \rightarrow \mathbb{R}$ is a Lyapunov function associated to  $X_{t}$, such that $L$ is the Lyapunov graph of   ($\mathbf{M}$, $X_{t}$,$f$). In this case, we say that  the flow $X_{t}$ defined   on  $\mathbf{M}$  is a  \textbf{realization} of $L$.
\end{Def}

 Let $L$ be a GS graph. We would like to investigate whether $L$ is realizable.So far, we have shown that for each vertex 
 $v$  of $L$, the GS semi-graph  formed by $v$  and its incident edges admits at least one realization for the singularity associated to $v$ as a minimal isolating block or as an isolating block with passageways.   
 The question now is to determine if these isolating blocks can be glued to each other  as indicated by 
 $L$.

In order to glue the exiting boundary of an isolating block onto the entering boundary of another isolating block, the distinguished branched $1$-manifolds which make up these two boundary components must be homeomorphic.

Usually, in a realization, one isolating block $B$ needs to be glued to at least two other blocks, $A$ and $C$, so that the exiting boundary of $A$ must be homeomorphic to the entering boundary of $B$, and at the same time, the exiting boundary of $B$ must be homeomorphic to the entering boundary of $C$.

Notice that the exiting boundary of $A$ and the entering boundary of $C$ are independent from one another. However, the distinguished branched $1$-manifold on the entering boundary of $B$ restricts the possible distinguished branched $1$-manifolds on the exiting boundary of $B$ and vice versa. So it might happen that we are able to glue $A$ and $B$, or $B$ and $C$, but not all three of them simultaneously.

In other words, a realization of a GS graph $L$ is reduced to the problem of assigning distinguished branched $1$-manifolds  to the edges of $L$ in such a way that the semi-graph of each vertex $v$ of $L$ is realizable  as a GS isolating block with precisely this choice of distinguished branched $1$-manifolds as  boundary components.
 
Most of the results in this section,  with the exception of Theorem  \ref{teo:minimal},  will consider GS flows without triple crossing singularities. The reason being the complexity of the distinguished branched $1$-manifolds that make up the boundary components of the corresponding isolating blocks and the fact that this choice has a trickle down effect on  boundary components of isolating blocks of other singularities.

\subsection{Realization of GS  graphs with weight or degree restrictions}

The following theorem will impose the minimality restriction on the weights of a GS graph.  This makes it possible to have a better control on the choices of the distinguished branched $1$-manifolds that make up the boundary of the isolating blocks.

\begin{Teo}[Minimal case]\label{teo:minimal}
Let $L$ be a minimal GS graph containing  singularities of type  $ \mathcal{R}, \mathcal{C}$, $\mathcal{W}$, $\mathcal{D}$ and $\mathcal{T}$.  $L$ admits  a realization.
\end{Teo}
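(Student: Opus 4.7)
The plan is to reduce the global realizability of $L$ to a compatible assignment of distinguished branched $1$-manifolds along the edges of $L$, and then to carry out the gluing. By Theorem \ref{teo:realizationminimalsemi} each vertex $v$ of $L$ admits at least one minimal GS isolating block $N_v$ realizing the semi-graph $L_v$, and Theorem \ref{teo:colecao} together with Table \ref{tab:bordos-colecao} enumerates, for each vertex type, all admissible distinguished branched $1$-manifolds that may occur as components of $N_v^{+}$ and $N_v^{-}$. Realizing $L$ thus amounts to selecting, for each edge $e$, a distinguished branched $1$-manifold $M_e$ lying in the intersection of the admissible lists at the two endpoints of $e$, and such that at every vertex $v$ the collection of $M_e$'s associated to the edges incident to $v$ arises from a common choice of $N_v$.

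To produce such an assignment I would work from the extremes of $L$ inward. Since $L$ is finite, acyclic and directed, it has sinks (attracting leaves) and sources (repelling leaves); by Proposition \ref{proposicao} each such leaf admits a unique minimal isolating block, which forces the distinguished branched $1$-manifold on its unique incident edge. Using Table \ref{tab:bordos-colecao} I would then propagate choices along the remaining edges. At vertices labelled with saddle singularities of type $\mathcal{R}$, $\mathcal{C}$, or $\mathcal{W}$, every admissible minimal boundary component can be chosen independently on each incident edge, so propagated choices can always be accommodated. The main obstacle I expect is at vertices of type $\mathcal{D}$ with nature $ss_s$ or $ss_u$ and at vertices of type $\mathcal{T}$ with nature $ssa$ or $ssr$: here the admissible boundary configurations come in coupled pairs (e.g.\ of the form (Branched3a, Branched3b)) and additionally enforce the weight-matching conditions (b) and (c) of Theorem \ref{teo:realizationminimalsemi}. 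Overcoming this obstacle amounts to a finite case analysis through the $33$ minimal blocks catalogued in the Corollary after Theorem \ref{teo:colecao}; because $L$ is minimal, the weights on every edge are forced by the local Poincaré-Hopf conditions at both endpoints, so the required coupling conditions are automatically satisfied and an admissible choice can always be made.

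Once a consistent edge-decoration is fixed, the realization is assembled as follows. Take the disjoint union $\bigsqcup_v N_v$ and, for each edge $e$ of $L$, glue the two matching boundary components of its endpoint blocks via a homeomorphism of distinguished branched $1$-manifolds. The quotient space $\mathbf{M}$ is a closed GS $2$-manifold. Rescaling the local Lyapunov functions on each $N_v$ so that their images lie in disjoint intervals of $\mathbb{R}$ ordered by the heights of the vertices in $L$ yields a continuous Lyapunov function $f\colon \mathbf{M}\to \mathbb{R}$, and the induced flow lies in $\Sigma^{r}_{0}(\mathbf{M})$ and realizes $L$.
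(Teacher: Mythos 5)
Your overall strategy --- reduce realizability to a consistent assignment of distinguished branched $1$-manifolds to the edges of $L$ and then glue --- is the right framing, and it matches the paper's setup. But the way you discharge the key step is not the paper's argument, and as written it has a gap. The paper does not propagate choices from the leaves inward; it observes that in the minimal case the only weights that occur are $1,2,3,5,7$, and it exhibits a single \emph{uniform} assignment weight $\mapsto$ distinguished branched $1$-manifold (Table \ref{tab:branchedminimal}) such that, for every weight, the chosen manifold appears in the admissible list of \emph{every} minimal GS isolating block in Table \ref{tab:bordos-colecao} whose semi-graph carries that weight. Because the assignment depends only on the weight, any two blocks sharing an edge automatically present homeomorphic boundary components there, and no propagation or consistency check at internal vertices is needed.

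The gap in your version is at the sentence claiming that ``because $L$ is minimal, the weights on every edge are forced\ldots so the required coupling conditions are automatically satisfied and an admissible choice can always be made.'' The difficulty at $\mathcal{D}_{ss_s}$, $\mathcal{D}_{ss_u}$ and $\mathcal{T}$ vertices is not about the weights (those are part of the data of $L$ and equal on a shared edge by definition); it is that a given weight, e.g.\ $3$ or $5$, admits several non-homeomorphic distinguished branched $1$-manifolds, and a block at one endpoint of an edge may realize only one of them while the block at the other endpoint realizes a different one --- this is exactly the failure mode of Examples \ref{ex:NR} and \ref{ex:Nosplit}. Your propagation scheme therefore still has to show that when constraints arriving from different edges meet at an internal vertex they can be satisfied by a common choice of block, and the only reason this works in the minimal case is the existence of the uniform choice above (unique for weights $1$, $2$, $7$; one common option among the two for weight $3$; two common options for weight $5$). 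You gesture at ``a finite case analysis through the $33$ minimal blocks'' but do not perform it, and the justification you substitute for it does not address the actual obstruction. Once that uniform choice is isolated, your gluing and Lyapunov-function assembly in the last paragraph is fine and agrees with the paper.
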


\begin{proof}

Since $L$ is minimal, the possible weights on the edges of $L$ are $1$
 $2$, $3$, $5$ or $7$. For each edge, consider the assignment  of distinguished branched $1$-manifolds given in Table \ref{tab:branchedminimal}.

\begin{table}[!h]
			\centering
			\resizebox{0.65\linewidth}{!}{
			\begin{tabular}{c|ccccccccccccccc}
			 \cellcolor{gray!20} w = weight & \cellcolor{gray!20} & \cellcolor{gray!20} $w = 1$ & \cellcolor{gray!20} & \cellcolor{gray!20} & \cellcolor{gray!20} $w = 2$ & \cellcolor{gray!20} & \cellcolor{gray!20} & \cellcolor{gray!20} $w = 3$ & \cellcolor{gray!20} & \cellcolor{gray!20} & \cellcolor{gray!20} $w = 5$ & \cellcolor{gray!20} & \cellcolor{gray!20} & \cellcolor{gray!20} $w = 7$ & \cellcolor{gray!20} \\
			  \hline
			   \cellcolor{gray!20} & &  &  &  &  &  &  &  &  &  &  &  &  &  &  \\
			   \cellcolor{gray!20} $N^{-}$ & & \includegraphics[align=c,scale=0.3]{Branched1.pdf} &  &  & \includegraphics[align=c,scale=0.3]{Branched2.pdf} &  &  & \includegraphics[align=c,scale=0.3]{Branched3a.pdf} &  &  & \includegraphics[align=c,scale=0.25]{Branched5e.pdf} &  &  & \includegraphics[align=c,scale=0.3]{Branched7.pdf} &  \\
			   \multicolumn{16}{c}{}  \\
			\end{tabular}}
			\caption{Distinguished branched $1$-manifolds with weight $w$}
			\label{tab:branchedminimal}
		\end{table}

This choice relies on the collection of minimal GS isolating blocks in Theorem \ref{teo:colecao} (see Table \ref{tab:bordos-colecao}). Note that there is a unique choice of distinguished branched $1$-manifolds for weights equal to  $1$, $2$ and $7$. In the case of weight $3$ there are two possibilities. 
However, the chosen one is common to all GS semi-graphs with  weight $3$ and hence, it is always realizable. In the case of weight $5$, there are two possible choices of distinguished branched $1$-manifolds common to all GS semi-graphs, anyone of which can be chosen. 
\end{proof}

\begin{Ex}\label{ex:minRCWD}

In Figure \ref{fig:realization}, a minimal GS graph is realized as a singular flow on a  GS $2$-manifold, as asserted in Theorem \ref{teo:minimal}.

\begin{figure}[!ht] 
\begin{equation*}
  \begin{tikzcd}
   \includegraphics[align=c,scale=0.5]{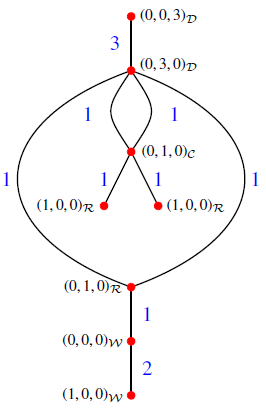} \hspace{0.5cm} \xrightarrow{Realization} \hspace{1cm} \includegraphics[align=c,scale=0.3]{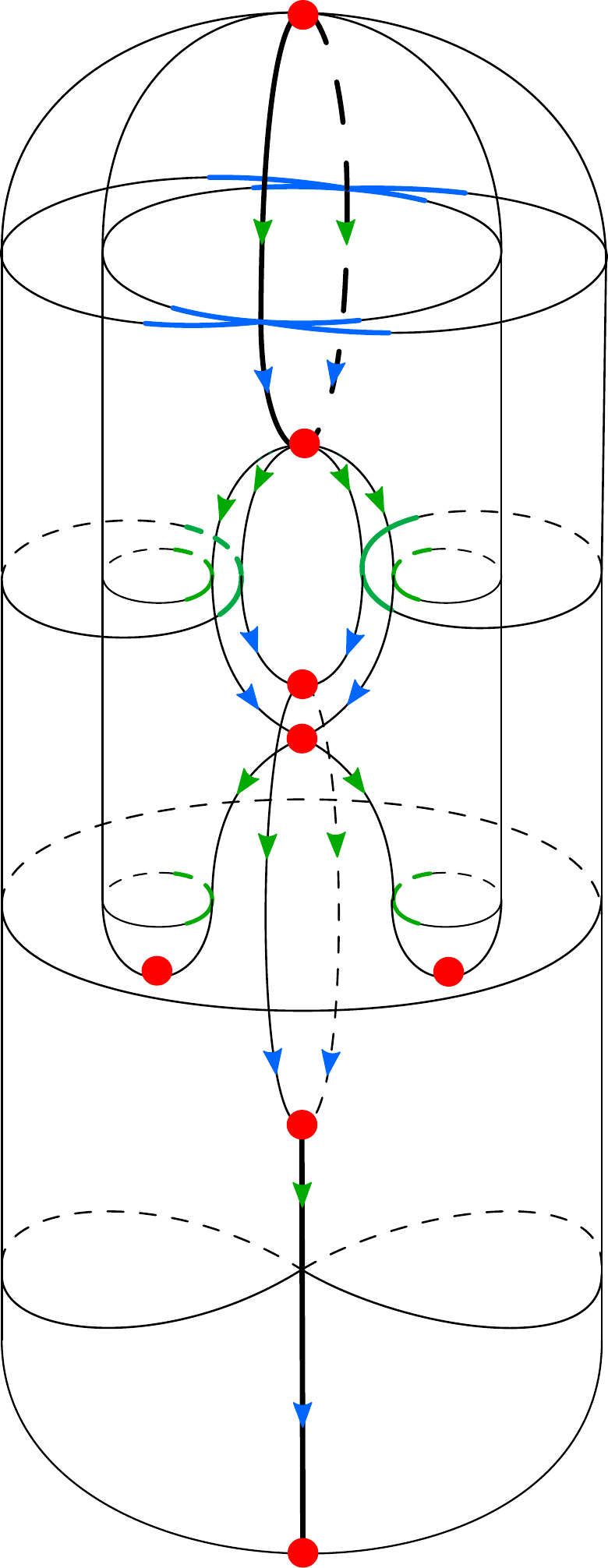}
  \end{tikzcd}
\end{equation*}
\caption{Realization of a minimal GS graph as a GS flow}
\label{fig:realization}
\end{figure}

\end{Ex}

{\bf Remark:} %\label{ex:minTar}
   The only example of a  minimal GS graph with a vertex labelled with a  singularity  of type $\mathcal{T}$ and of attracting or repelling nature is the repeller-attractor pair. In other words, a flow made up of two singularities of type $\mathcal{T}$, one  attractor and one repeller.

The next theorem will impose a degree restriction on a GS graph. It turns out that GS graphs with no bifurcation vertices are always realizable.

\begin{Teo}[Linear Graph]\label{teo:linear}
Let  $L$  be a GS graph  containing singularities of type $ \mathcal{R}, \mathcal{C}, \mathcal{W}$ and $\mathcal{D}$ such that every vertex $v$ of $L$  has degree less than or equal to $2$. Then  $L$ is realizable.
\end{Teo}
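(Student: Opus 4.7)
The plan is to assign a distinguished branched $1$-manifold $\mathcal{B}(e)$ to each edge $e$ of $L$, realize every vertex of $L$ as a GS isolating block whose incident boundary components coincide with the assigned $\mathcal{B}(e)$'s, and glue the blocks along $L$ to obtain the desired closed GS $2$-manifold. First I use the hypotheses to restrict drastically the list of semi-graphs that may occur at a vertex $v$. Since $L$ avoids type $\mathcal{T}$ and every vertex has degree at most $2$, inspection of Table~\ref{tab:RCWDT} together with Theorem~\ref{teo:combinatorial} shows that the only admissible semi-graphs $L_v$ are: the attractor and repeller semi-graphs of types $\mathcal{R}$, $\mathcal{W}$, $\mathcal{D}$ (degree one); the cone attractor or repeller (degree two of type $(2,0)$ or $(0,2)$); and the $(1,1)$-type saddle semi-graphs \emph{rs1}, \emph{cs1}, \emph{wss1}, and \emph{dsa1}. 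For every one of these, conditions (ii)--(iv) of Theorem~\ref{teo:passageway_cone} are vacuous, because each of them involves either a vertex of degree $\geq 3$ or a triple-crossing vertex; condition (i) is precisely the Poincaré--Hopf condition, which is built into the definition of a GS graph.

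For each edge $e$ with weight $w(e)$ I choose a single connected distinguished branched $1$-manifold $\mathcal{B}(e)$ whose first Betti number equals $w(e)$, drawn from (or modeled on) the inventory of Table~\ref{tab:bordos-colecao}: a circle when $w(e)=1$, a figure-eight when $w(e)=2$, and theta-type branched manifolds for larger weights. The crucial observation is that the only same-side branching in $L$ occurs at a cone attractor or cone repeller, where the two incident edges correspond to distinct connected components of the boundary of the minimal block; consequently the choices of $\mathcal{B}(e)$ on different edges do not interfere with one another. For each vertex $v$ I then realize $L_v$ as a GS isolating block $N_v$ whose incident boundary component along $e$ is exactly $\mathcal{B}(e)$. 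If the weights at $v$ are minimal this is Theorem~\ref{teo:colecao}; otherwise I take the minimal block provided by Theorem~\ref{teo:colecao} and apply Corollary~\ref{cor:construcao_passageway} to insert the required number of passageways, choosing the pairs of identified regular orbits inside a small neighborhood of the prescribed boundary component, so that its first Betti number increases to $w(e)$ without altering the topology of the other boundary component. Theorem~\ref{teo:passageway_cone} then certifies that the resulting block realizes $L_v$.

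Finally I glue the family $\{N_v\}$ according to $L$: for every edge $e$ from $v$ to $v'$ the exiting boundary component of $N_v$ along $e$ equals the entering boundary component of $N_{v'}$ along $e$ by construction, both being $\mathcal{B}(e)$, so the identification is well-defined and yields a closed GS $2$-manifold $\mathbf{M}$ carrying a GS flow whose associated Lyapunov graph is $L$. The main obstacle in this plan is verifying that the passageway construction of Corollary~\ref{cor:construcao_passageway} can raise the first Betti number of each boundary component of $N_v$ \emph{independently} and in a way compatible with the fixed choice of $\mathcal{B}(e)$ along each incident edge; this is exactly where higher-degree or triple-crossing vertices would force incompatible matchings between sister boundary components. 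For the short list of admissible semi-graphs above this independence always holds, because every boundary component of the minimal block is bordered by a neighborhood of regular orbits disjoint from those of the other component, so the required identifications can be performed in isolation.
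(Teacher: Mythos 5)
Your overall strategy coincides with the paper's: fix one distinguished branched $1$-manifold per weight, build each block from the minimal one of Theorem \ref{teo:colecao} by inserting passageways, and glue along $L$ because matching weights then force homeomorphic boundaries. However, there are two concrete problems in the execution. First, your list of admissible semi-graphs is incomplete: besides $rs1$, $cs1$, $wss1$ and $dsa1$, a vertex of degree $2$ may also be labelled with a double crossing singularity of nature $ss_{s}$ (or $ss_{u}$) with $e_{v}^{+}=e_{v}^{-}=1$ (the semi-graph $dsss1$ of Table \ref{tab:bordos-colecao}, with $\mathcal{B}^{+}=\mathcal{B}^{-}+2$). It can be treated by the same method, but since you build the rest of the argument on the claimed exhaustiveness of the list, the omission must be repaired.

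Second, and more seriously, your resolution of what you call the ``main obstacle'' rests on a false premise. A passageway is by definition a fold whose $\alpha$- and $\omega$-limits lie outside the block, so it necessarily enters through $N^{+}$ and exits through $N^{-}$: every identification of a pair of orbits as in Corollary \ref{cor:construcao_passageway} adds a branch point to \emph{one entering and one exiting} boundary component simultaneously. It is therefore impossible to raise the first Betti number of one boundary component ``without altering the topology of the other boundary component,'' and the independence you invoke does not hold. The reason the construction nevertheless succeeds for degree-$2$ vertices is different: the Poincar\'e--Hopf condition fixes $\mathcal{B}^{+}-\mathcal{B}^{-}$ at its minimal value, so adding $k$ passageways raises \emph{both} weights by $k$ and lands exactly on the prescribed pair $(w(e^{+}),w(e^{-}))$. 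You should replace the independence argument by this lockstep count, and you should also specify (as the paper does with the identification sequence of Figure \ref{fig:sequence1} producing Table \ref{tab:bordosAAA}) \emph{where} the identifications are performed, since for weight $\geq 3$ there are several non-homeomorphic distinguished branched $1$-manifolds and an arbitrary choice of identified orbits need not yield the $\mathcal{B}(e)$ you fixed in advance.
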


\begin{proof}
Given an edge of $L$ with weight $w$, the realization of $L$ can be achieved by choosing distinguished branched $1$-manifolds according to Table \ref{tab:bordosAAA}.

\begin{table}[!htb]
			\centering
			\resizebox{0.95\linewidth}{!}{
			\begin{tabular}{c|cccccccccccccccccc}
			 \cellcolor{gray!20} w = weight & \cellcolor{gray!20} & \cellcolor{gray!20} $w = 1$ & \cellcolor{gray!20} & \cellcolor{gray!20} & \cellcolor{gray!20} $w = 2$ & \cellcolor{gray!20} & \cellcolor{gray!20} & \cellcolor{gray!20} $w = 3$ & \cellcolor{gray!20} & \cellcolor{gray!20} & \cellcolor{gray!20} $w = 4$ & \cellcolor{gray!20} & 
			 \cellcolor{gray!20} $\ldots$ & \cellcolor{gray!20}  &
			 \cellcolor{gray!20} $w = 2k - 1$ & \cellcolor{gray!20}& \cellcolor{gray!20} & \cellcolor{gray!20} $w = 2k$ \\
			  \hline
			   \cellcolor{gray!20} & &  &  &  &  &  &  &  &  &   & & & &  &  & & & \\
			   \cellcolor{gray!20} $N^{-}$ & & \includegraphics[align=c,scale=0.3]{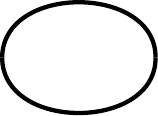} &  &  & \includegraphics[align=c,scale=0.3]{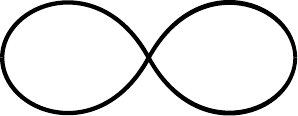} &  &  & \includegraphics[align=c,scale=0.3]{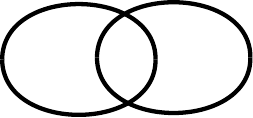} &  &  & \includegraphics[align=c,scale=0.3]{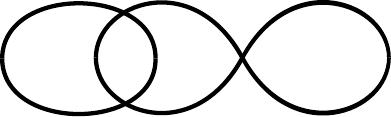} &  &         & & \includegraphics[align=c,scale=0.3]{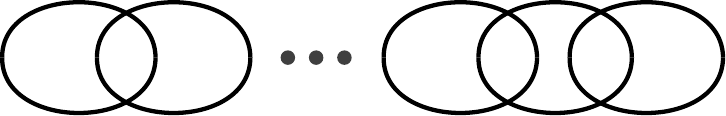} & & & \includegraphics[align=c,scale=0.3]{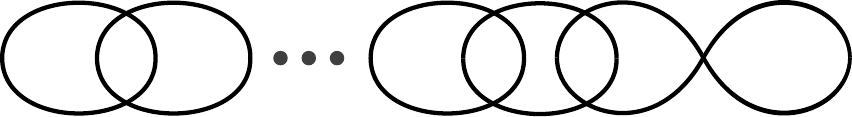} \\
			   \cellcolor{gray!20} & &  &  &  &  &  &  &  &  &  &   & & &  &  & & & \\
			   
			\end{tabular}}
			\caption{Distinguished branched $1$-manifolds with weight $w$}
			\label{tab:bordosAAA}
		\end{table}

Let $N^{+}$ be a distinguished branched $1$-manifold   which is the entering boundary of a minimal GS isolating block  $N$. Consider the  sequence of identifications of a pair of points in Figure \ref{fig:sequence1}, where these pair of points in  $N^{+}$ are on orbits in  $N$ whose $\omega$-limit does not belong  to $N$.

\begin{figure}[h] 
\begin{equation*}
  \begin{tikzcd}
   \includegraphics[align=c,scale=0.4]{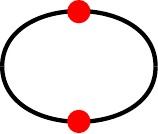} \hspace{0.4cm} \rightarrow \hspace{0.4cm} \includegraphics[align=c,scale=0.4]{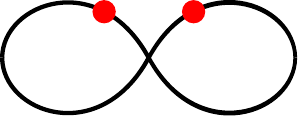} \hspace{0.4cm} \rightarrow \hspace{0.4cm} \includegraphics[align=c,scale=0.4]{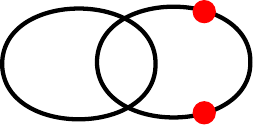} \hspace{0.4cm} \rightarrow \hspace{0.4cm} \includegraphics[align=c,scale=0.4]{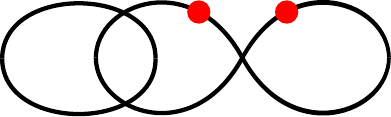} \hspace{0.4cm} \rightarrow  \hspace{0.4cm} \includegraphics[align=c,scale=0.4]{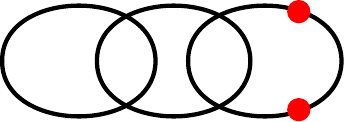} \hspace{0.4cm} \rightarrow \hspace{0.4cm} \ldots
  \end{tikzcd}
\end{equation*}
\caption{Identification sequence }
\label{fig:sequence1}
\end{figure}

Each identification produces a branch point.  After performing $k$ identifications, the GS isolating block will have $k$ passageways. It is easy to verify that the exiting boundaries of any isolating block arising in this way  will have as boundary components the distinguished branched $1$-manifolds in Table \ref{tab:bordosAAA}.

Each GS semi-graph $L_{v}$.  with one vertex  $v \in L$ can be realized  by the above procedure starting from the corresponding minimal GS semi-graph. This corresponding  minimal  GS semi-graph has the same indegree, outdegree  and labelling  as the semi-graph $L_{v}$.

Since any given boundary with weight $n$ has a unique distinguished branched $1$-manifold that is realized on the boundary of isolating blocks, it is trivial to glue one block to the other. Hence, $L$ is realizable.

\end{proof}

\begin{Ex}\label{ex:RCWDlinear}
In Figure \ref{fig:realizationRCWDlinear}, a GS graph (on the left) satisfying Theorem  \ref{teo:linear} is realized as a GS flow (on the right).

\begin{figure}[h] 
\begin{equation*}
  \begin{tikzcd}
   \includegraphics[align=c,scale=0.6]{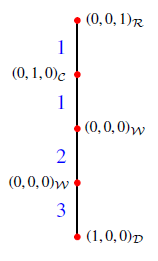} \hspace{0.5cm} \xrightarrow{Realization} \hspace{1cm} \includegraphics[align=c,scale=0.26]{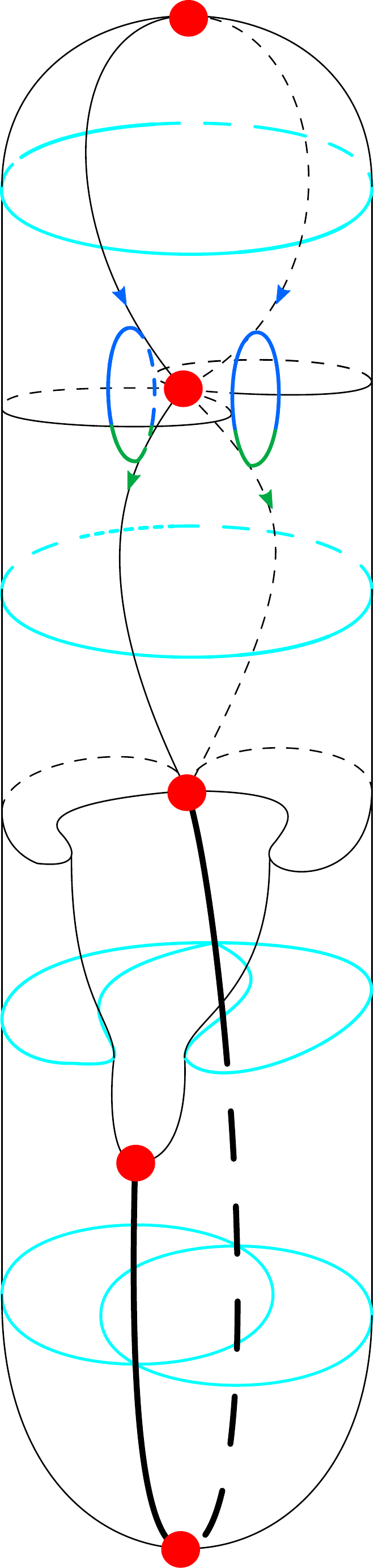}
  \end{tikzcd}
\end{equation*}
\caption{Realization of a GS graph as a GS flow}
\label{fig:realizationRCWDlinear}
\end{figure}
\end{Ex}

It is interesting to note that bifurcation vertices with incident edges that are not labelled with minimal weights, introduce complex combinatorial questions on the choice of the distinguished branched $1$-manifolds that are boundary components of isolating blocks. The following example illustrates this fact.
%As can be seen in Example \ref{ex:NR}.

\begin{Ex}[Non-realizable]\label{ex:NR}

In Figure \ref{fig:GSnot},  the GS graph $L$ contains a bifurcation vertex labelled with a singularity of type $\mathcal{W}$ whose weights are not minimal. Note that 
all semi-graphs containing a unique vertex of $L$ admit only one realization as a GS isolating block. However, the distinguished branched $1$-manifolds of weight $3$ that make up the exiting and entering boundaries of the first two blocks can not be glued to each other. Hence, the graph is non realizable.

\begin{figure}[htb]
\centering
\includegraphics[scale=0.6,align=c]{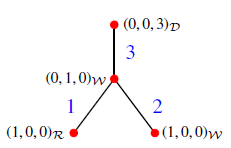} \hspace{2cm} \includegraphics[scale=0.3,align=c]{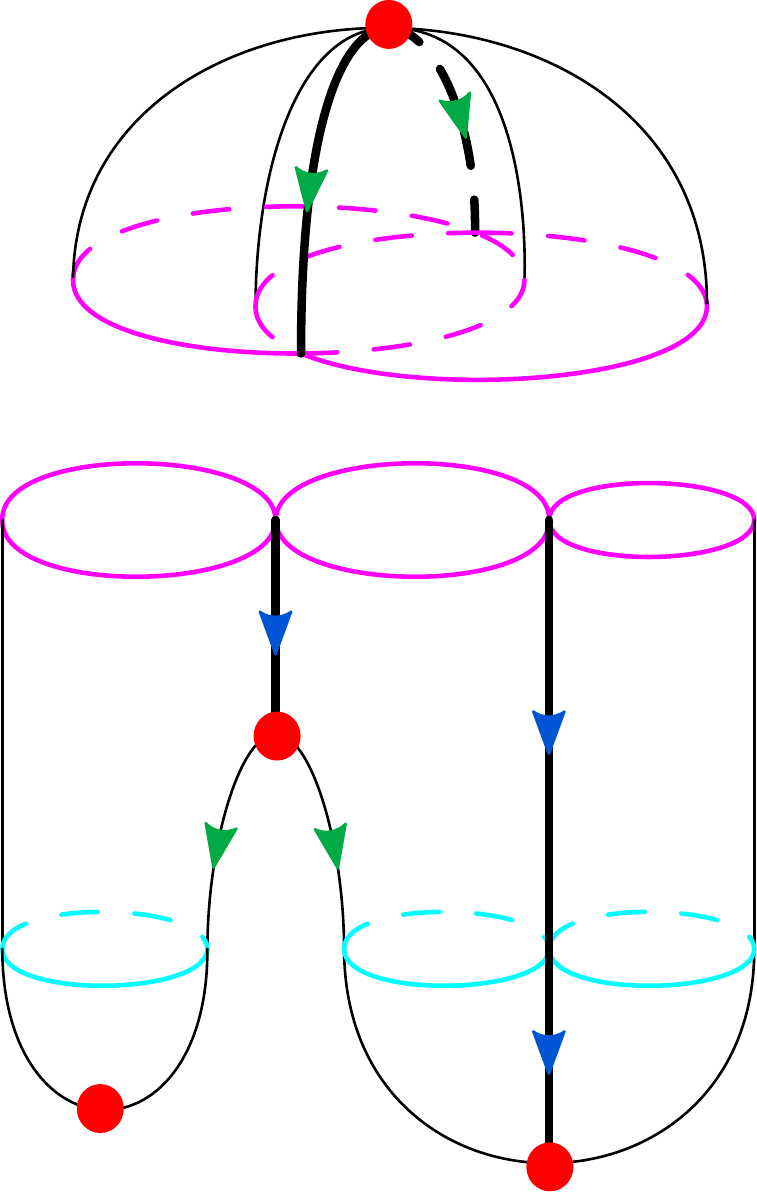}
\caption{A GS graph that is non realizable as a GS flow} 
\label{fig:GSnot}
\end{figure}

\end{Ex}

The next theorem is an attempt to deal with the problem presented in Example \ref{ex:NR}. It will blend in the results in Theorems \ref{teo:minimal} and \ref{teo:linear} by imposing  minimal weights only on incident edges of bifurcation vertices.

\begin{Teo}\label{teo:blend}
Let  $L$ be a GS graph containing  singularities of types  $\mathcal{R}, \mathcal{C}, \mathcal{W}$ and $\mathcal{D}$. Suppose that all incident edges of vertices $v$ of $L$ with degree greater than or equal to $3$ has minimal weights.
% then a GS semi-graph given by a vertex $v$ and its incident edges is minimal. 
Then $L$ is realizable.
\end{Teo}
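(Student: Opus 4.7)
The plan is to extend the edge-labelling strategies of Theorems \ref{teo:minimal} and \ref{teo:linear} into a single coherent assignment on $L$, then realize each vertex as a GS isolating block with the prescribed boundary components and glue. Call a vertex of $L$ \emph{bifurcating} if its degree is at least $3$, and \emph{linear} otherwise. By hypothesis, every edge incident to a bifurcating vertex carries minimal weight, which for singularities of types $\mathcal{R}, \mathcal{C}, \mathcal{W}, \mathcal{D}$ lies in $\{1,2,3\}$. First I would assign distinguished branched $1$-manifolds to every edge incident to at least one bifurcating vertex using Table \ref{tab:branchedminimal}: weight $1$ is a circle, weight $2$ is the figure eight, weight $3$ is the Branched3a manifold. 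As in the proof of Theorem \ref{teo:minimal}, this choice is common to every minimal GS semi-graph of type $\mathcal{R}, \mathcal{C}, \mathcal{W}, \mathcal{D}$, so it realizes each bifurcating vertex as one of the minimal GS isolating blocks classified in Theorem \ref{teo:colecao}, and two bifurcating vertices joined by an edge automatically agree on that edge's boundary.

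To the remaining edges, namely those joining two linear vertices, I would assign the distinguished branched $1$-manifolds of Table \ref{tab:bordosAAA}, so that Theorem \ref{teo:linear} realizes each purely linear segment of $L$ just as in that theorem's proof. The central step is to realize the linear vertices sitting at the \emph{interface}, that is, linear vertices with at least one edge already labelled by the minimal assignment of the first step. For such a vertex $v$, I would begin with the minimal GS isolating block $N_0$ whose associated semi-graph has the same indegree, outdegree and vertex label as $v$; by construction, the boundary component of $N_0$ on the bifurcating side coincides with the distinguished branched $1$-manifold already prescribed there. Then, invoking Corollary \ref{cor:construcao_passageway} and the passageway construction detailed in Section \ref{sec:passageway}, I would identify pairs of regular orbits whose $\alpha$- and $\omega$-limits lie on the opposite boundary component, thereby raising the weight of that component up to the value demanded by the second-step assignment while leaving the interface boundary untouched. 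The iterative identification procedure illustrated in Figure \ref{fig:sequence1} shows that the resulting boundary is exactly the distinguished branched $1$-manifold of Table \ref{tab:bordosAAA} for that weight, so compatibility with the neighbouring purely linear segment is preserved.

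Once every vertex has been realized as a GS isolating block with matching boundary components, I would glue the blocks along those components following the incidence pattern of $L$; the Lyapunov functions on each block extend to a global Lyapunov function on the resulting closed GS $2$-manifold $\mathbf{M}$, whose Lyapunov graph is $L$ by construction. The main obstacle is the interface verification: one must check, case by case using Table \ref{tab:bordos-colecao}, that for every admissible degree-$2$ saddle nature ($s$, $s_s$, $sa$, $ss_s$) and type ($\mathcal{R}, \mathcal{C}, \mathcal{W}, \mathcal{D}$), the minimal block $N_0$ really does present the prescribed minimal distinguished branched $1$-manifold on the bifurcating side, and that passageways can be added on the opposite side without altering the interface boundary. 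The degree-$1$ interface case (an attractor or repeller adjacent to a bifurcating vertex) is immediate from Proposition \ref{proposicao}. The subtlety of the degree-$2$ case is essentially combinatorial, since for some saddle types of $\mathcal{D}$ the minimal block admits more than one choice of boundary $1$-manifold, and one must select the block whose boundary matches the minimal choice imposed by the neighbouring bifurcating vertex.
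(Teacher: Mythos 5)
Your overall architecture coincides with the paper's: decompose $L$ into the degree-$\geq 3$ vertices (realized by minimal GS isolating blocks as in Theorem \ref{teo:minimal}) and the degree-$<3$ vertices (realized as in Theorem \ref{teo:linear}), and glue along the interface edges, which by hypothesis carry minimal weights $1$, $2$ or $3$, for which the two families of distinguished branched $1$-manifolds agree. That is essentially the paper's proof.

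However, your mechanism for the interface vertices contains a step that would fail. You propose to start from the minimal block $N_0$ of an interface vertex $v$ and then add passageways that raise the weight of the non-interface boundary component ``while leaving the interface boundary untouched.'' A passageway is a fold obtained by identifying a pair of orbits that traverse the block, so it necessarily enters through a component of $N^{+}$ and exits through a component of $N^{-}$: each identification raises the weight of one entering component \emph{and} one exiting component simultaneously. Since the flow is transversal to $\partial N$, no orbit has both its $\alpha$- and $\omega$-limit on the same boundary component, so the one-sided operation you describe does not exist, and the ``interface verification'' you defer to the end would come back negative. The situation where you would need it genuinely occurs: a degree-$2$ regular saddle whose exiting edge is the entering edge of a bifurcating $\mathcal{D}_{sa}$ vertex carries weight $3$ on that edge, hence by the Poincaré-Hopf condition weight $3$ on its entering edge as well, so \emph{both} of its boundary components must be raised from the minimal weight $1$. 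The repair is exactly what the paper does: realize every degree-$<3$ vertex, interface or not, by the two-sided passageway procedure of Theorem \ref{teo:linear} (the Poincaré-Hopf equality at a degree-$\leq 2$ vertex forces the two weights to grow in lockstep, so this always suffices), producing the boundaries of Table \ref{tab:bordosAAA} on every edge; on the interface edges these have weight at most $3$ and coincide with the choices of Table \ref{tab:branchedminimal}, so all blocks glue according to $L$.
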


\begin{proof}

The proof is a direct consequence of Theorems \ref{teo:minimal} and \ref{teo:linear}.

Let $L_{v}$ be a GS semi-graph consisting of a single vertex $v \in L$ and its incident edges. Consider the decomposition of $L$ given as follows:
$$L = \left( \bigcup_{deg(v) < 3} L_{v} \right) \cup \left( \bigcup_{deg(v) \geq 3} L_{v} \right) $$

We have that each connected component of $\bigcup_{deg(v) < 3} L_{v}$ is realizable by Theorem \ref{teo:linear} with the choice of distinguished branched $1$-manifolds presented in Table \ref{tab:FirstFamily}.  And each connected component of $\bigcup_{deg(v) \geq 3} L_{v}$ is realizable by a minimal GS isolating block.

Notice that the intersection $$\left( \bigcup_{deg(v) < 3} L_{v} \right) \cap \left( \bigcup_{deg(v) \geq 3} L_{v} \right)$$ is made up of edges with weights $1$, $2$ and $3$. Since the choice of distinguished branched $1$-manifolds match for all these weights, the realizations of the connected components of these two unions of $L_{v}'s$ can all be glued together. Hence, $L$ is realizable.

\end{proof}

\subsection{Realization of GS graphs in the presence of bifurcation vertices  }

As we have seen in the last section, bifurcation vertices can constitute obstructions in the realizability of a GS graph. The simultaneous presence of bifurcation vertices in GS graphs  labelled with Whitney and  double crossing singularities may in some cases not be realizable. See Example  \ref{ex:NR}.

For this reason our next theorem will restrict the labelings of the vertices on a GS graph to singularities of type $\mathcal{R}$, $\mathcal{C}$ and $\mathcal{W}$, excluding singularities of type $\mathcal{D}$.

\begin{Teo}[{$\mathcal{R} \mathcal{C}\mathcal{W}$-Case}]\label{Teo:W} All GS graphs labelled only with singularities of type  $\mathcal{R}, \mathcal{C}$ and $\mathcal{W}$ are  realizable.
\end{Teo}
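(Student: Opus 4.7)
The plan is to exhibit a canonical assignment of distinguished branched $1$-manifolds to the edges of $L$ that simultaneously realizes the semi-graph at every vertex. Concretely, for every positive integer $w$, let $D_w$ denote the distinguished branched $1$-manifold of weight $w$ displayed in Table \ref{tab:bordosAAA}, i.e.\ the connected branched $1$-manifold with $b_1 = w$ produced by the identification sequence of Figure \ref{fig:sequence1}. I will assign $D_w$ to every edge of $L$ of weight $w$, regardless of which vertex it is incident to, and then check local realizability vertex by vertex.

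The verification at each vertex $v$ proceeds by starting from the minimal GS isolating block realizing the same indegree, outdegree and label as $L_v$, and then adding passageways via Corollary \ref{cor:construcao_passageway} until the weights on the boundary components match the weights prescribed by $L$. If $v$ is of type $\mathcal{R}$ or $\mathcal{C}$, Theorem \ref{teo:colecao} shows that the boundary components of the minimal block are all circles ($D_1$'s), so passageways added along the canonical sequence upgrade a component from $D_k$ to $D_{k+1}$ at each step. If $v$ is of type $\mathcal{W}$, the minimal block has $D_2$ on the Whitney side and $D_1$'s on the opposite side, and the same canonical passageway procedure upgrades each component into the appropriate $D_w$. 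That the Poincar\'e--Hopf condition on $L_v$ is exactly the bookkeeping constraint on how many passageways can be distributed among the available boundary components is precisely Theorem \ref{teo:passageway_cone}, which provides the flexibility needed to hit any prescribed system of weights. Since the labels on the vertices of $L$ are restricted to $\mathcal{R}$, $\mathcal{C}$ and $\mathcal{W}$, no alternative choice of weight-$w$ distinguished branched $1$-manifold (like the $\mathcal{D}$-specific \emph{Branched3a}/\emph{Branched3b} pair) is ever forced, and hence the canonical $D_w$ is always a valid choice at both ends of every edge.

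Once Step 2 is established, gluing is automatic: each edge of $L$ carries the same $D_w$ viewed from the two incident blocks, so the blocks can be pasted along their matched boundary components to produce a closed GS $2$-manifold $\mathbf{M}$ together with a GS flow $X_t$ whose associated Lyapunov graph is $L$. The resulting $f$ can be assembled from the local Lyapunov functions on each block with small value shifts so that distinct singularities receive distinct critical values.

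The main obstacle I anticipate is to argue cleanly that the canonical passageway sequence at a bifurcation vertex produces precisely the canonical $D_w$ on each boundary component, and not some other branched $1$-manifold of the same Betti number. One has to choose, at each identification step, a pair of regular orbits whose entry (resp.\ exit) points lie on the intended boundary component and in the right position so that the resulting branch point extends the previously built $D_{k}$ into $D_{k+1}$ rather than producing an inequivalent branched $1$-manifold; this is the combinatorial core of the argument. The exclusion of $\mathcal{D}$ labels is exactly what rules out the incompatibility illustrated in Example \ref{ex:NR}, and the exclusion of $\mathcal{T}$ labels is what prevents the trickle-down effect on boundary components alluded to in the preamble of this section.
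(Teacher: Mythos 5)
Your overall strategy is the same as the paper's: fix one distinguished branched $1$-manifold per weight, realize each $L_v$ with exactly those boundary components by adding passageways to a minimal block, and then glue. However, you chose the wrong canonical family, and this is not a cosmetic slip: the family of Table \ref{tab:bordosAAA} (the one generated by the identification sequence of Figure \ref{fig:sequence1}, which the paper uses only for the linear-graph case, Theorem \ref{teo:linear}) cannot realize all $\mathcal{R}\mathcal{C}\mathcal{W}$ semi-graphs at bifurcation vertices. Lemma \ref{lem:FamilyB} makes this precise: for a Whitney saddle with $e_v^{\pm}=1$, $e_v^{\mp}=2$, realizability with that family forces $\mathcal{B}^{\pm}$ to be even and $\{b_1^{\mp},b_2^{\mp}\}=\{1,\mathcal{B}^{\pm}-1\}$, and for a regular saddle with $e_v^{\pm}=1$, $e_v^{\mp}=2$ and $\mathcal{B}^{\pm}$ odd it forces $b_1^{\mp},b_2^{\mp}$ both odd. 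A regular saddle with $b_1^{+}=3$, $b_1^{-}=b_2^{-}=2$, or a Whitney saddle with $b_1^{+}=5$, $b_1^{-}=b_2^{-}=2$, satisfies Poincar\'e--Hopf and is a legitimate vertex of an $\mathcal{R}\mathcal{C}\mathcal{W}$ graph, yet admits no isolating block whose boundary components all lie in Table \ref{tab:bordosAAA}. The reason is the one you yourself flag as the ``main obstacle'' but do not resolve: pairs of orbits to be identified must enter and exit through connected components in a way compatible with the partition of $N_0^{+}$ induced by the stable manifold of the singularity (the two loops of the figure eight, or the two arcs of the circle), and the odd-weight members of Table \ref{tab:bordosAAA} stay connected after removing one branch point, so they cannot arise as such boundaries.

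The paper avoids this by using the different family of Table \ref{tab:FirstFamily}, generated by the identification sequence of Figure \ref{fig:sequence2}, in which each new branch point is created by attaching a cylinder along a fold; every member of that family can be reached from the minimal $\mathcal{R}$, $\mathcal{C}$ and $\mathcal{W}$ boundaries no matter how the passageways must be distributed among the exit components (this is the content of Lemma \ref{lem:FirstFamily}, whose hypotheses are automatically satisfied when only $\mathcal{R}$, $\mathcal{C}$, $\mathcal{W}$ labels occur). Your appeal to Theorem \ref{teo:passageway_cone} is also too weak for your purposes: it guarantees that $L_v$ is realized by \emph{some} isolating block, not by one whose boundary components are your prescribed $D_w$'s. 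Replacing your family by that of Table \ref{tab:FirstFamily} and proving the reachability statement for it would repair the argument and reproduce the paper's proof.
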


\begin{proof}

Consider the choice of distinguished branched $1$-manifolds 
for the  edges of  $L$ labelled with weights $w$ given in Table \ref{tab:FirstFamily}.

\begin{table}[!htb]
			\centering
			\resizebox{0.85\linewidth}{!}{
			\begin{tabular}{c|cccccccccccccccccc}
			 \cellcolor{gray!20} $w=$weight & \cellcolor{gray!20} & \cellcolor{gray!20} $w = 1$ & \cellcolor{gray!20} & \cellcolor{gray!20} & \cellcolor{gray!20} $w = 2$ & \cellcolor{gray!20} & \cellcolor{gray!20} & \cellcolor{gray!20} $w = 3$ & \cellcolor{gray!20} & \cellcolor{gray!20} & \cellcolor{gray!20} $w = 4$ & \cellcolor{gray!20} & \cellcolor{gray!20} &
			 \cellcolor{gray!20} $\ldots$ & \cellcolor{gray!20} & \cellcolor{gray!20} &
			 \cellcolor{gray!20} $w = n$ & \cellcolor{gray!20} \\
			  \hline
			   \cellcolor{gray!20} & &  &  &  &  &  &  &  &  &  &  & & & & &  &  &  \\
			   \cellcolor{gray!20} $N^{-}$ & & \includegraphics[align=c,scale=0.3]{Branched1.pdf} &  &  & \includegraphics[align=c,scale=0.3]{Branched2.pdf} &  &  & \includegraphics[align=c,scale=0.3]{Branched3b.pdf} &  &  & \includegraphics[align=c,scale=0.3]{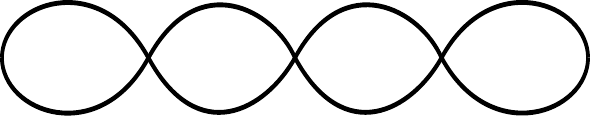} &  &  &  & & & $\overbrace{\includegraphics[align=c,scale=0.3]{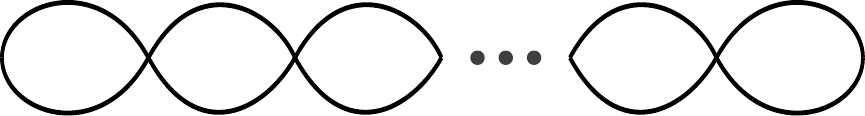}}$ &  \\
			   \multicolumn{19}{c}{}
			\end{tabular}}
			\caption{Distinguished branched $1$-manifolds with weight $w$}
			\label{tab:FirstFamily}
		\end{table}

Each GS semi-graph $L_{v}$ formed by a single vertex $v \in L$ and its incident edges can be realized from the realization of the corresponding minimal GS semi-graph $\ell_{v}$. Indeed, starting from a minimal GS isolating block $N_{0}$ with boundary components in Table \ref{tab:FirstFamily} which realizes $\ell_{v}$, one may perform a sequence of identifications of pairs of points in $N^{+}$, as shown in Figure \ref{fig:sequence2}, where these pairs of points are orbits in $N$ whose $\omega$-limit sets do not belong to $N$.

\begin{figure}[h] 
\begin{equation*}
  \begin{tikzcd}
   \includegraphics[align=c,scale=0.4]{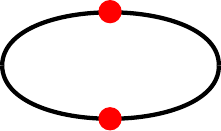} \hspace{0.4cm} \rightarrow \hspace{0.4cm} \includegraphics[align=c,scale=0.4]{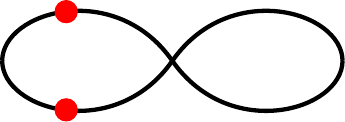} \hspace{0.4cm} \rightarrow \hspace{0.4cm} \includegraphics[align=c,scale=0.4]{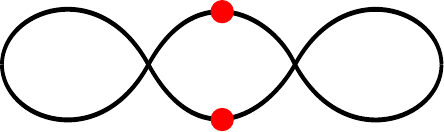} \hspace{0.4cm} \rightarrow \hspace{0.4cm} \includegraphics[align=c,scale=0.4]{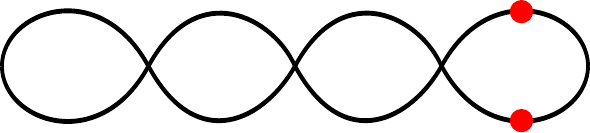} \hspace{0.4cm} \rightarrow  \hspace{0.4cm} \ldots
  \end{tikzcd}
\end{equation*}
\caption{Identification sequence}
\label{fig:sequence2}
\end{figure}

These identifications must be performed until the weights on the boundaries of the block match the weights on the edges of $L_{v}$. 

Notice that the topological effect of each identification on the boundary $N^{+}$ is to produce a branch point that determines a fold along which a cylinder is glued to the block. We remark that, for a vertex of degree $2$, the relative position of these cylinders with respect to the  minimal GS block may produce topologically non-equivalent  realizations of the same semi-graph.

Lastly, the realizations of all GS semi-graphs $L_{v}$ can be glued together due to the fact that all boundaries of weight $w$ have the same distinguished branched $1$-manifold.  
 
\end{proof}

\begin{Ex}\label{ex:RCW}
In  Figure \ref{fig:realization2}, we present  two  topologically non equivalent  realizations of a GS graph  satisfying Theorem  \ref{Teo:W}.

\begin{figure}[h] 
\centering
\begin{equation*}
  \begin{tikzcd}
   \includegraphics[align=c,scale=0.6]{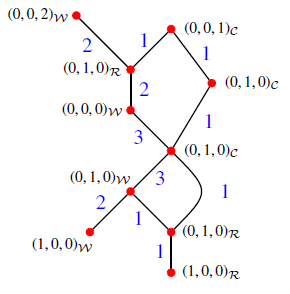} \xrightarrow{Realizations} \hspace{0.1cm} \includegraphics[align=c,scale=0.3]{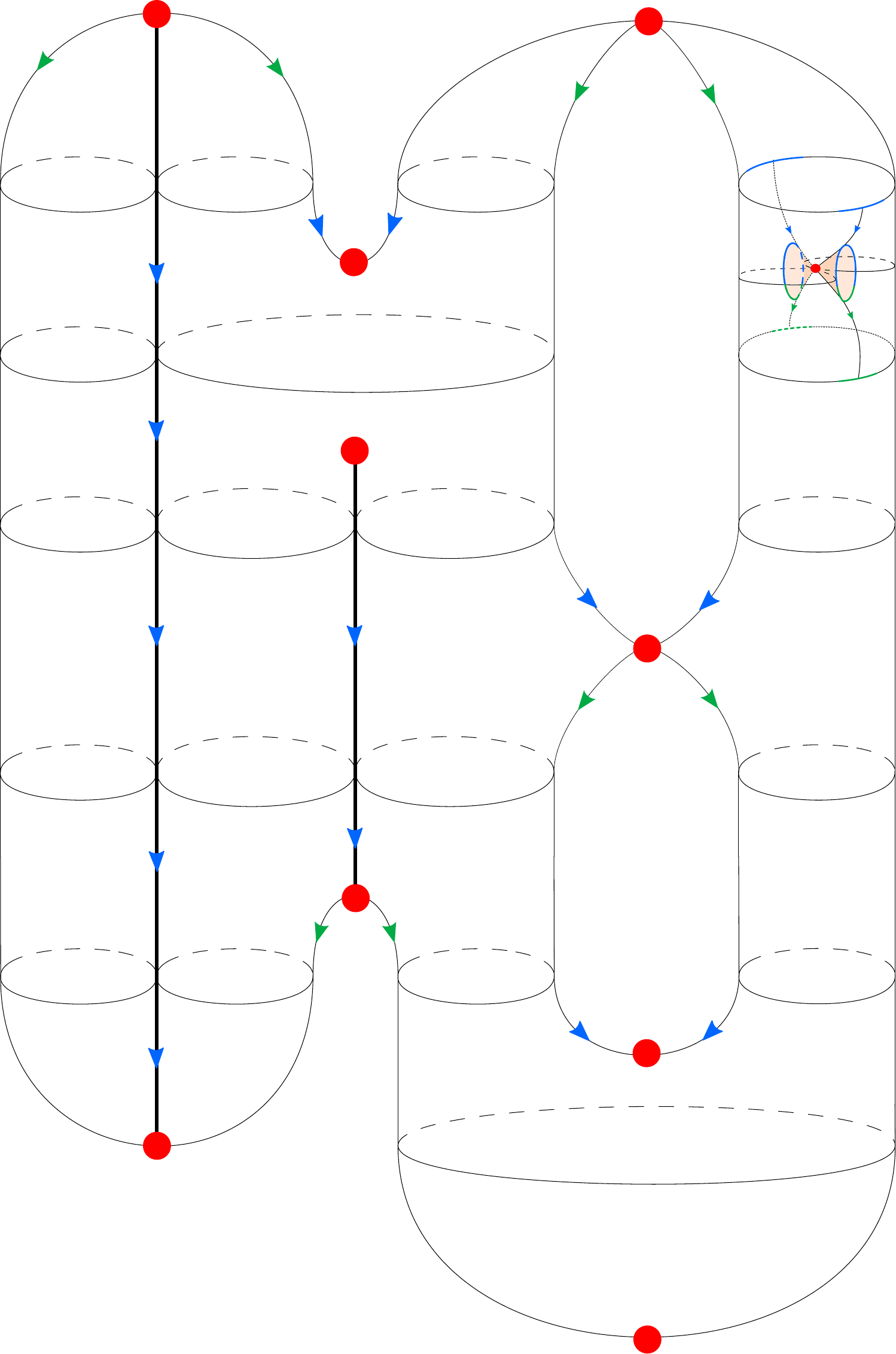} \hspace{0.5cm} \includegraphics[align=c,scale=0.3]{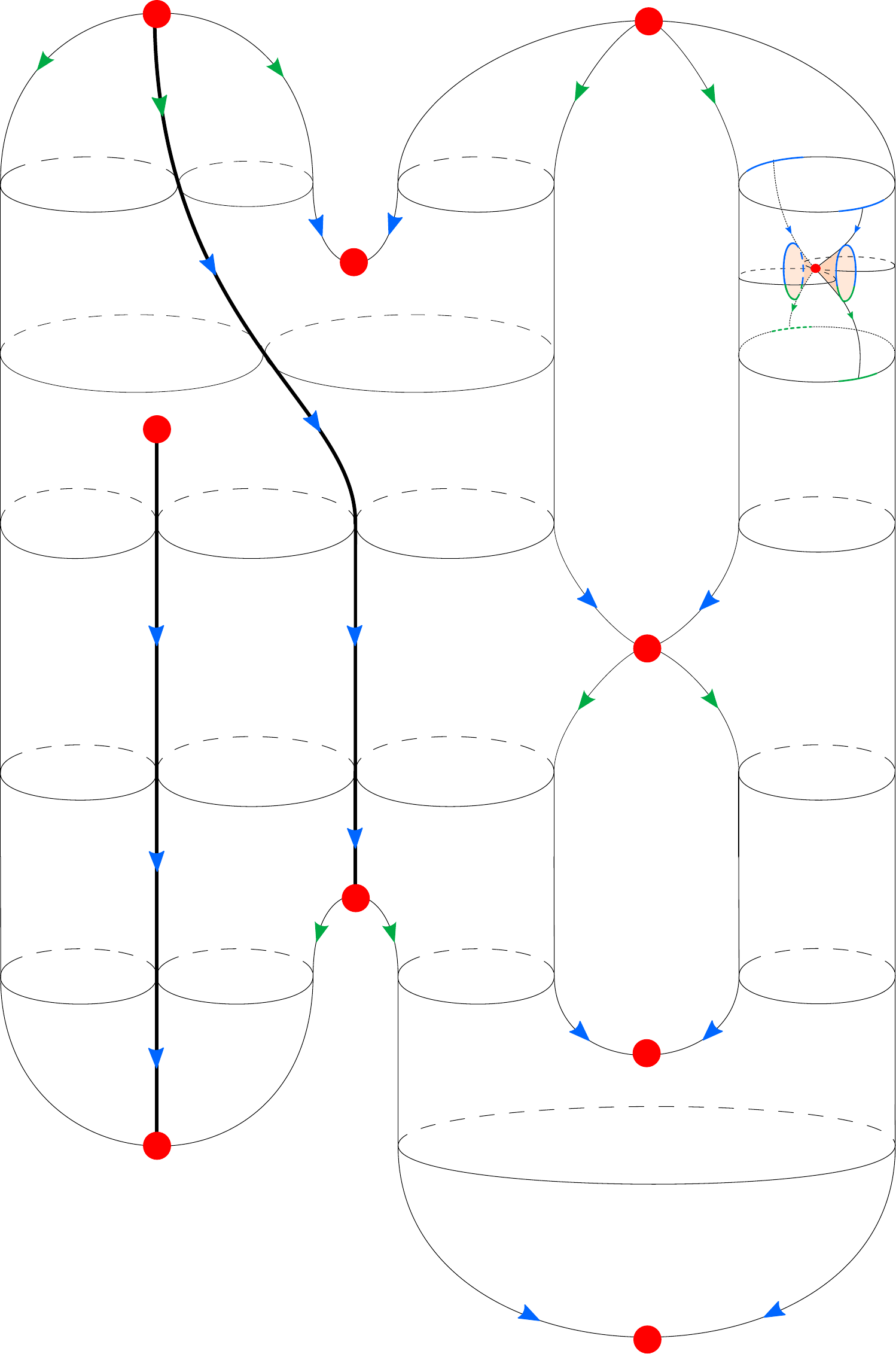}
  \end{tikzcd}
\end{equation*}
\caption{Realizations of a  GS graph as a GS flow}
\label{fig:realization2}
\end{figure}
\end{Ex}

Although Theorem \ref{Teo:W} was obtained by excluding the presence of vertices labelled with singularities of type $\mathcal{D}$, if we pay closer attention to the family of distinguished branched $1$-manifolds in Table \ref{tab:FirstFamily}, we notice that it is actually possible to realize almost every GS semi-graph with a single vertex labelled with a singularity of type $\mathcal{D}$ having this family of distinguished branched $1$-manifolds as boundary components, the exceptions being the GS semi-graph of degree $1$ labelled with natures $a$ or $r$, the GS semi-graph of degree $3$ labelled with natures $sa$ or $sr$, and the GS semi-graph of degree $5$ labelled with natures $ss_{s}$ or $ss_{u}$, as states the next Lemma.

\begin{Lem}\label{lem:FirstFamily}
Let $L_{v}$ be a GS semi-graph with a single vertex $v$ and its incident edges. Then, $L_{v}$ can be realized by the family of distinguished branched $1$-manifolds in Table \ref{tab:FirstFamily} if and only if $L_{v}$ satisfies:

\begin{itemize}

\item[i) ] $v$ is labelled with a singularity of type $\mathcal{R}, \mathcal{C}, \mathcal{W}$ or $\mathcal{D}$;

\item[ii) ] if $v$ has degree $1$, then its incident edge has weight equal to $1$ or $2$; 

\item[iii) ] if $v$ is labelled with a singularity of type $\mathcal{D}$ and nature $sa$ or $sr$, then $v$ has degree $2$;

\item[iv) ] $v$ has degree less or equal to $4$;

\item[v) ] if $v$ is labelled with a singularity of type $\mathcal{D}$ and nature $ss_{s}$ (resp. $ss_{u}$), with $e_{v}^{+} = 1, e_{v}^{-} = 2$ (resp. $e_{v}^{+} = 2, e_{v}^{-} = 1$), then $\{b_{1}^{-}, b_{2}^{-}\}$ (resp. $\{b_{1}^{+}, b_{2}^{+}\}$) is equal to $\{1, \mathcal{B}^{+} - 2\}$ (resp. $\{1, \mathcal{B}^{-} - 2\}$);

\item[vi) ] if $e_{v}^{-} = 3$ (resp. $e_{v}^{+} = 3$), then $\{b_{1}^{-}, b_{2}^{-}, b_{3}^{-}\}$ (resp. $\{b_{1}^{+}, b_{2}^{+}, b_{3}^{+}\}$) is equal to $\{1, 1, \mathcal{B}^{-} - 2\}$;

\end{itemize}

\end{Lem}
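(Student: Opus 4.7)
The plan is to prove both implications by leveraging the characterization of minimal isolating blocks (Theorem \ref{teo:colecao}, Table \ref{tab:bordos-colecao}), the non-realizability results of Theorem \ref{teo:combinatorial}, and the passageway construction of Corollary \ref{cor:construcao_passageway} together with Lemma \ref{lem:passageway_minimal}. The central observation is that the family in Table \ref{tab:FirstFamily} is a specific nested sequence of distinguished branched $1$-manifolds in which each element is obtained from the previous one by adding exactly one new branch point, all of them sharing a single underlying ``base'' circle.

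For the necessity direction $(\Rightarrow)$, I would proceed by inspection. Condition (i) follows because none of the weight-$w$ elements of Table \ref{tab:FirstFamily} contains branch patterns of the type required on the boundary of a $\mathcal{T}$-type block (compare with the weight-$5$ and weight-$7$ configurations appearing in Tables \ref{tripleblocks1} and \ref{tripleblocks2}). Conditions (ii) and (iii) follow from Proposition \ref{proposicao} and Table \ref{tab:bordos-colecao}: a degree-$1$ vertex labelled with a $\mathcal{D}$ attractor/repeller only admits the Branched3a boundary in its minimal block, while the weight-$3$ entry of Table \ref{tab:FirstFamily} is Branched3b; passageways cannot convert the first into the second since they only add new branch points rather than redistribute existing ones. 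The analogous argument handles $\mathcal{D}_{sa}$, $\mathcal{D}_{sr}$ of degree $\neq 2$. Condition (iv) is a direct consequence of Theorem \ref{teo:combinatorial}. Conditions (v) and (vi) follow from the fact that the passageway construction (Lemma \ref{lem:passageway_minimal}) only increases the weight \emph{within} a single connected component of $N_0^\pm$, so starting from a minimal block with several boundary components that all have weight $1$, one can only funnel all additional weight into a single component while the others remain of weight $1$.

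For the sufficiency direction $(\Leftarrow)$, I would argue by case analysis on the type and nature of the singularity at $v$. In each admissible case, I start from a minimal GS isolating block $N_0$ realizing the corresponding minimal GS semi-graph $\ell_v$, choosing its boundary components from Table \ref{tab:FirstFamily} (the verification that such a choice is available is done case by case using Table \ref{tab:bordos-colecao}). Then I iteratively apply Corollary \ref{cor:construcao_passageway} as illustrated in Figure \ref{fig:sequence2}, identifying pairs of orbits lying in the same connected component of $N_0^+$ (or $N_0^-$) whose $\omega$-limit or $\alpha$-limit does not belong to $N_0$. Each such identification transforms the weight-$n$ member of Table \ref{tab:FirstFamily} into the weight-$(n{+}1)$ member on the affected component, while preserving the structure of all other components. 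Conditions (ii)--(vi) ensure that the required weight distribution across components can indeed be achieved in this way.

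The main obstacle will be the case analysis for $\mathcal{D}_{ss_s}$ and $\mathcal{D}_{ss_u}$ under conditions (v) and (vi), since for these multi-component boundaries one must carefully select the specific minimal block from Table \ref{tab:bordos-colecao} whose exiting (or entering) set matches the prescribed weight distribution $\{1,\ldots,1,\mathcal{B}^\pm - (k-1)\}$ after performing all the passageway identifications on the designated component. The subtlety lies in the fact that identifications cannot redistribute weight between disjoint boundary components; thus for each allowed configuration one must point to the unique minimal block in Table \ref{tab:bordos-colecao} where all boundary components appear in Table \ref{tab:FirstFamily} and where the single high-weight component of $L_v$ corresponds to the component of $N_0^\pm$ on which passageways will be stacked. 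Once this correspondence is established, the construction and its termination follow routinely.
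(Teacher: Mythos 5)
Your overall architecture coincides with the paper's: necessity by inspection of the classification of minimal blocks (Theorem \ref{teo:colecao}, Table \ref{tab:bordos-colecao}) combined with Lemma \ref{lem:passageway_minimal}, and sufficiency by stacking passageways on a minimal block realizing the associated minimal semi-graph as in Figure \ref{fig:sequence2}. However, your justification of condition (iv) does not work. Theorem \ref{teo:combinatorial} only excludes the configurations $(e_v^+,e_v^-)=(2,4)$ and $(2,3)$ for $\mathcal{D}_{ss_s}$ (and their flow reversals); it does not exclude the degree-$5$ configuration $(e_v^+,e_v^-)=(1,4)$, which in fact \emph{is} realizable as a minimal GS isolating block and appears in Table \ref{tab:bordos-colecao}. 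The reason a degree-$5$ vertex cannot be realized with the family of Table \ref{tab:FirstFamily} is not combinatorial non-realizability but the fact that the only weight-$3$ distinguished branched $1$-manifold admitted on the entering edge of that minimal block is the one \emph{not} chosen in Table \ref{tab:FirstFamily}, and adding passageways cannot change which weight-$3$ manifold occurs; this is the argument the paper gives, and it rests on Theorem \ref{teo:colecao}, not on Theorem \ref{teo:combinatorial}.

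A second gap is in your necessity argument for (v) and (vi). The observation that each single identification raises the weight of only one connected component does not preclude distributing several passageways among the different components of $N^{-}$, which would produce, say, $\{2,\mathcal{B}^{+}-3\}$ instead of $\{1,\mathcal{B}^{+}-2\}$. What forces all folds to exit through one and the same component is the geometry of the minimal block together with the requirement that the single entering component $N^{+}$ remain a member of Table \ref{tab:FirstFamily}: the branch points added to $N^{+}$ must sit in a position (relative to the intersection of the stable manifold of $p$ with $N^{+}$) from which the corresponding orbits are all constrained to exit through the same component of $N^{-}$. As written, the claim that one can only funnel all additional weight into a single component is asserted rather than proved; the rest of your necessity argument and the whole sufficiency direction match the paper's proof.
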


\begin{proof}

The proof relies heavily on the characterization of possible distinguished branched $1$-manifolds that make up the boundary components of minimal GS isolating blocks given in Theorem \ref{teo:colecao}.

$(\Rightarrow)$ First, suppose $L_{v}$ can be realized as a GS flow on a GS isolating block, which has as boundary components a distinguished branched $1$-manifolds of Table \ref{tab:FirstFamily}. Now we verify that $L_{v}$ satisfies conditions $i)$ through $vi)$:

\begin{itemize}

\item[i) ] Theorem \ref{teo:colecao} states that there is no minimal GS isolating block for a singularity of type $\mathcal{T}$ with the choice of distinguished branched $1$-manifolds in Table \ref{tab:FirstFamily} as boundary components. Therefore, there cannot be GS isolating blocks with passageways for singulatiries of type $\mathcal{T}$ with this choice either. Hence, $v$ must be labelled with a singularity of type $\mathcal{R}, \mathcal{C}, \mathcal{W}$ or $\mathcal{D}$;

\item[ii) ] If $v$ has degree $1$, then the possible labels on $v$ are either attracting or repelling singularities of type $\mathcal{R}, \mathcal{W}$ or $\mathcal{D}$. However, it follows from Theorem \ref{teo:colecao} that a realization of a GS isolating block for an attracting or repelling singularity of type $\mathcal{D}$ cannot be achieved with the distinguished branched $1$-manifold of weight $3$ pictured in Table \ref{tab:FirstFamily}. Thus, $v$ is either labelled with an attracting or repelling singularity of type $\mathcal{R}$ or $\mathcal{W}$, which implies weight $1$ or $2$ on its incident edge, respectively;

\item[iii) ] The possible degrees of a vertex $v$ labelled with a singularity of type $\mathcal{D}$ and nature $sa$ or $sr$ are $2$ or $3$. However, according to Theorem \ref{teo:colecao}, only the cases where $v$ has degree $2$ admit realizations matching the choice of distinguished branched $1$-manifolds in Table \ref{tab:FirstFamily};

\item[iv) ] If we suppose by contradiction that the degree of $v$ is greater than $4$, than $v$ must have degree $5$, which implies that $v$ is labelled with a singularity of type $\mathcal{D}$ and nature $ss_{s}$ or $ss_{u}$. Either way, Theorem \ref{teo:colecao} implies that such $L_{v}$ do not admit a realization matching the distinguished branched $1$-manifolds of Table \ref{tab:FirstFamily}. This follows since the distinguished branched $1$-manifold of weight $3$ on Table \ref{tab:FirstFamily} cannot be chosen as the boundary of a minimal GS isolating block for such singularities. Furthermore, adding passageways to their minimal isolating blocks produce boundaries that also do not belong to Table \ref{tab:FirstFamily}. This contradicts our initial hypothesis that $L_{v}$ admits a realization with boundary components within Table \ref{tab:FirstFamily}. Hence, $v$ must have degree less or equal to $4$;

\item[v) ] The realization of $L_{v}$, in the case that $v$ is labelled with a singularity of type $\mathcal
D$ and nature $ss_{s}$, with minimal weights on its incident edges, is given in Table \ref{doubleSSsblocks}. We recall that turning the block upside down gives a minimal GS isolting block for nature $ss_{u}$. The minimal GS isolating block shows that, even though $e_{v}^{-} = 2$ (resp. $e_{v}^{+} = 2$), in order to create a GS isolating block with passageways and boundaries matching Table \ref{tab:FirstFamily}, the folds must exit the block through only one of the exiting (resp. entering) boundaries. Thus the weights $\{b_{1}^{-}, b_{2}^{-}\}$ (resp. $\{b_{1}^{+}, b_{2}^{+}\}$) must be equal to $\{1, \mathcal{B}^{+} - 2\}$ (resp. $\{1, \mathcal{B}^{-} - 2\}$);

\item[vi) ] Analogous to item $v)$;

\end{itemize}

$(\Leftarrow)$ Suppose $L_{v}$ satisfies conditions $i)$ through $vi)$ and denote by $\ell_{v}$ the minimal GS semi-graph associated to $L_{v}$, that is, it has the same labelling, indegree and outdegree, but with minimal weights on its incident edges satisfying the Poincaré-Hopf condition. Then, by Theorem \ref{teo:colecao}, $\ell_{v}$ admits a realization as a GS flow on a minimal GS isolating block $N$ with boundary components given by the distinguished branched $1$-manifolds in Table \ref{tab:FirstFamily}. One can identify orbits of $N$ as in Figure \ref{fig:sequence2}, such that the topological effect of each identification produces a branch point that determines a fold along which a cylinder is glued to the block.  One can perform a finite number of such identifications until the weights on the boundaries of $N$ become equal to the respective weights on the edges of $L_{v}$. Moreover, the distinguished branched $1$-manifold produced by each of these identifications belong to Table \ref{tab:FirstFamily}, thus completing the proof.

\end{proof}

It turns out that attractors, repellers and other singularities of type $\mathcal{D}$ that do not satisfy Lemma \ref{lem:FirstFamily} may obstruct the realization of a GS graph, even in the case where all folds have singularities of type $\mathcal{D}$ as their $\alpha, \omega$ limit set.

Recall that an isolating block with passageways arises from the identification of specific orbits on a minimal isolating block. Since the identification of orbits may lower the number of connected components on the boundaries of the block, in some cases, we must choose orbits that enter and exit the block through the same connected components. 

We remark that this fact is one of the causes that may prevent the gluing of two blocks. Indeed, if two blocks have a different number of connected components, it may happen that the orbits that must be identified, in order to produce the same distinguished branched $1$-manifold, may {enter or exit the block through different connected components}, preventing the realization. See Example \ref{ex:Nosplit} below. It is worth mentioning that this scenario never occurs in a GS graph labelled with singularities of type $\mathcal{R}, \mathcal{C}$ and $\mathcal{W}$ as proved in Theorem \ref{Teo:W}. And also does not occur whenever bifurcation vertices are not present, as proved in Theorem \ref{teo:linear}.

\begin{Ex}[Non-realizable]\label{ex:Nosplit} In the example presented in  Figure  \ref{fig:Nrealization} (a), the GS graph which is labelled with singularities of type   $\mathcal{R}$ and $\mathcal{D}$, is non-realizable although  satisfying the Poincaré-Hopf condition on all edges. The obstruction arises since there is no common choice of distinguished branched $1$-manifolds of weight $3$ that permits its realization. 
More specifically, all vertices of $L$ admit a unique realization as a GS flow on a GS isolating block, but the distinguished branched $1$-manifolds of weight $3$ of these realizations are not homeomorphic, thus preventing the gluing of all blocks. On the other hand, if we consider homeomorphic distinguished branched $1$-manifolds of weight $3$, note that the weights on the exiting boundaries of the GS isolating block for a regular saddle do not match the weights on $L$, see  Figure \ref{fig:Nrealization} (b).

\begin{figure}[htb] 
\centering
\begin{minipage}{0.45\textwidth}
$(a)$ \\
%\begin{equation*}
%%  \begin{tikzcd}
\includegraphics[align=c,scale=0.6]{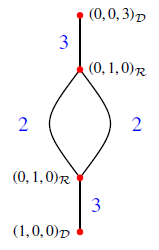}  \hspace{1cm} \includegraphics[align=c,scale=0.3]{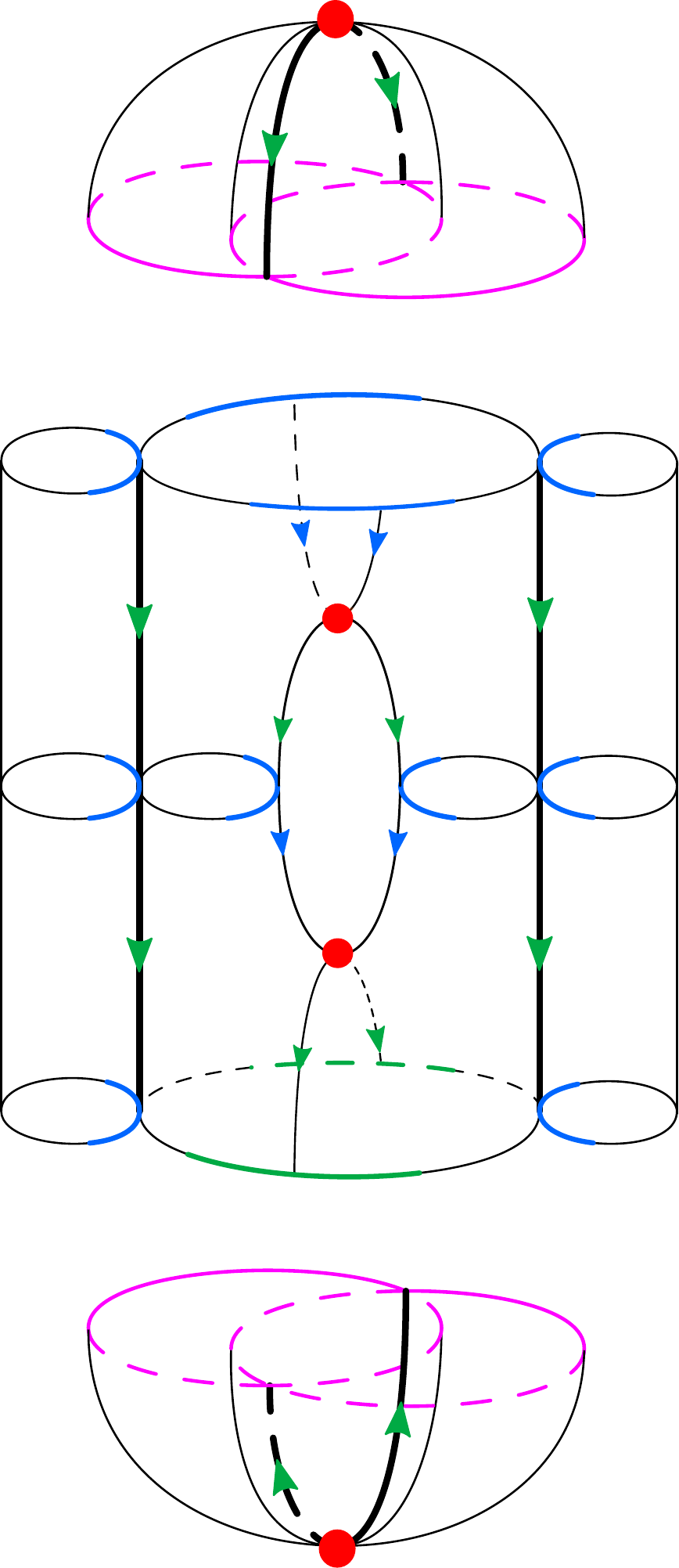} \\
%\caption{Non-realizable GS graph}
%\label{fig:Nrealization}%\hspace{2cm}  \includegraphics[align=c,scale=0.3]{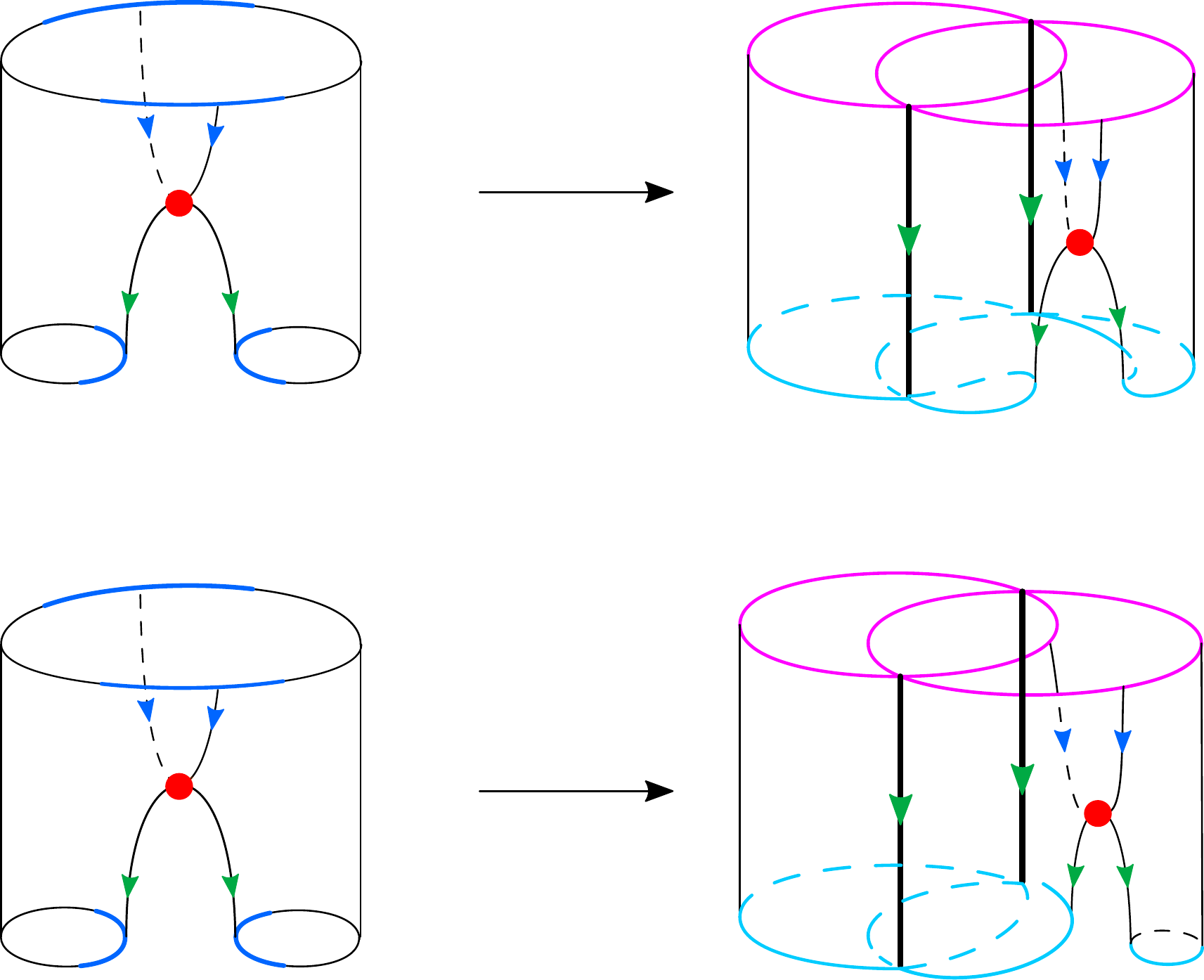}
\end{minipage}
\hfill\vline\hfill
%%%%%%%%%%%%%%%%%%%%%%%%%%%%%%%%%%%%%%%%%%%%%%%%%
\begin{minipage}{0.45\textwidth}
$(b)$ \\
%  \end{tikzcd}
%\end{equation*}
%\caption{(a) Non-realizable GS graph;  (b) Regular saddle isolating  block with $N^{+}$ of weight $3$}
\begin{tabular}{cc}
& \includegraphics[align=c,scale=0.3]{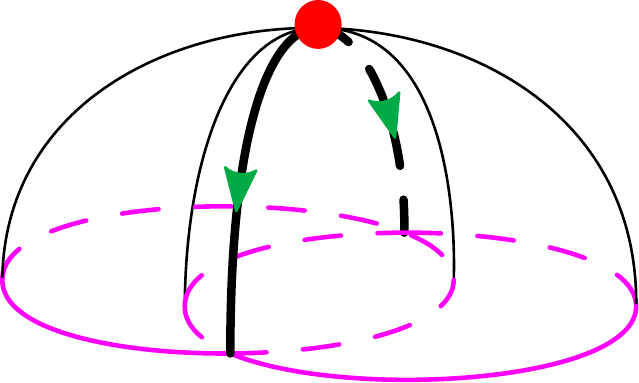} \\
& \\
\includegraphics[align=c,scale=0.5]{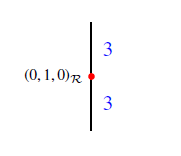} & \includegraphics[align=c,scale=0.3]{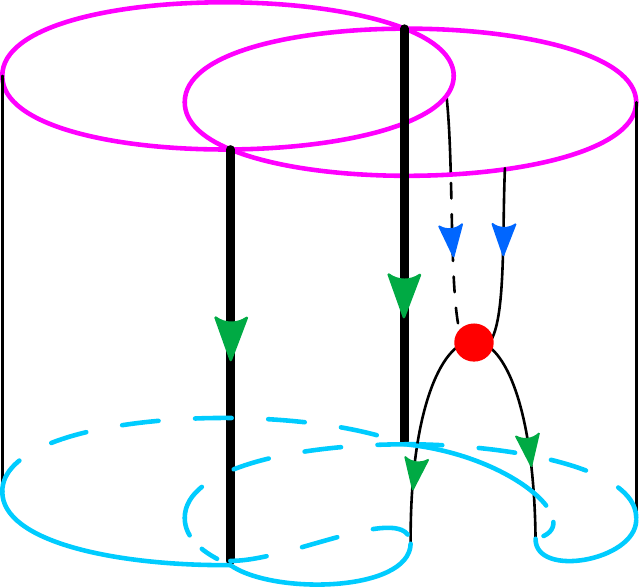} \\
& \\
\includegraphics[align=c,scale=0.5]{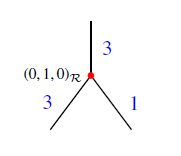} & \includegraphics[align=c,scale=0.3]{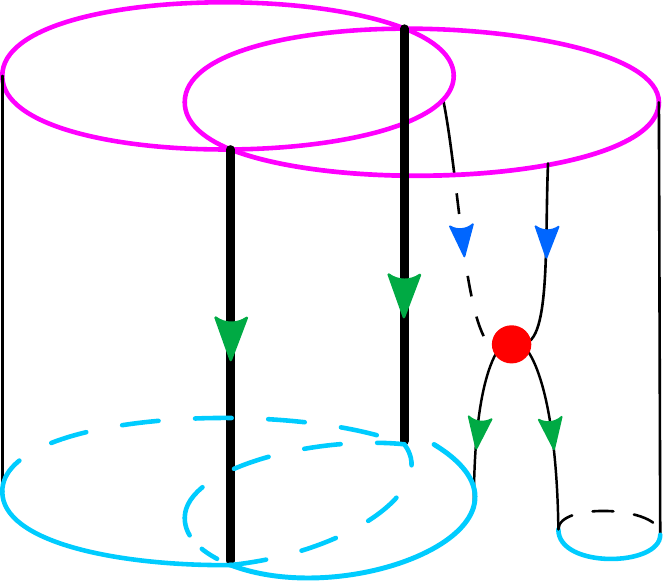} \\
& \\
\end{tabular}
%\caption{Regular saddle isolating  block with $N^{+}$ of weight $3$}
\end{minipage}
\caption{(a) Non-realizable GS graph;  (b) Regular saddle isolating  block with $N^{+}$ of weight $3$}
\label{fig:Nrealization}
\end{figure}

\end{Ex}

As seen in Example \ref{ex:Nosplit}, bifurcation vertices without the minimal weight condition may not be realizable.
The problem in this example resides in the fact that there are two non homeomorphic distinguished branched $1$-manifolds of weight $3$ that form the boundary of two isolating blocks that must be glued to each other. In order to obtain a result for bifurcation vertices in general, we must restrict the allowable boundary components that make up the isolating blocks.

In what follows, we will consider the family of distinguished branched $1$-manifolds in Table \ref{tab:bordosAAA} and analyze under what conditions bifurcation vertices and its incident edges can be realized with this family of distinguished branched $1$-manifolds. This is the content of the next Lemma.

\begin{Lem}\label{lem:FamilyB}
Let $L_{v}$ be a GS semi-graph with a single vertex $v$ and its incident edges, belonging to Table \ref{tab:bordos-colecao}, such that $v$ is labelled with a singularity of type $\mathcal{R}, \mathcal{C}, \mathcal{W}$ or $\mathcal{
D}$. Then, $L_{v}$ can be realized by the family of distinguished branched $1$-manifolds in Table \ref{tab:bordosAAA} if and only if $L_{v}$ satisfies the following conditions, according to the singularity with which $v$ is labelled:

\begin{itemize}

\item[i) ] (regular or double crossing) if $e_{v}^{\pm} = 1, \ e_{v}^{\mp} = 2, \ |\mathcal{B^{+}} - \mathcal{B}^{-}| = 1$ and $\mathcal{B}^{\pm}$ is odd, then $b_{1}^{\mp}, b_{2}^{\mp}$ are odd;

\item[ii) ] (Whitney) if $e_{v}^{\pm} = 1$ and $e_{v}^{\mp} = 2$, then $\mathcal{B}^{\pm}$ is even and $\{b_{1}^{\mp}, b_{2}^{\mp}\}$ is equal to $\{1, \mathcal{B}^{\pm} - 1\}$;

\item[iii) ] (double crossing) if $e_{v}^{+} = e_{v}^{-} = 2$ and $\mathcal{B}^{\pm} > \mathcal{B}^{\mp}$, then the weights $b_{1}^{\pm}, b_{2}^{\pm}$ are even, and $\{b_{1}^{\mp}, b_{2}^{\mp}\}$ is either equal to $\{1, \mathcal{B}^{\pm} - 3 \}$ or $\{b_{1}^{\pm} - 1, b_{2}^{\pm} - 1\}$;

\item[iv) ] (double crossing) if $e_{v}^{\mp} = 3$, then $b_{i}^{\mp} = 1$ for at least one index $i \in \{1,2,3\}$. Moreover, if $\mathcal{B}^{\pm}$ is odd, then the weights $b_{1}^{\mp}, b_{2}^{\mp}, b_{3}^{\mp}$ are all odd;

\item[v) ] (double crossing) If $e_{v}^{\mp} = 4$, then $b_{i}^{\mp} = 1$ for at least two edges. Moreover, if $\mathcal{B}^{\pm}$ is odd, then the weights $b_{1}^{\mp}, b_{2}^{\mp}, b_{3}^{\mp}, b_{4}^{\mp}$ are all odd;

\end{itemize}

\end{Lem}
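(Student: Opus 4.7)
My plan is to mirror the strategy used for Lemma \ref{lem:FirstFamily}, running a two-direction argument that rests on the classification of minimal GS isolating blocks in Theorem \ref{teo:colecao} and on the passageway construction/deconstruction given by Corollary \ref{cor:construcao_passageway} and Lemma \ref{lem:passageway_minimal}. The key extra ingredient here is parity: inspecting Table \ref{tab:bordosAAA}, the distinguished branched $1$-manifolds assigned to odd weights $2k-1$ and to even weights $2k$ have structurally different branch patterns (one more branch point is ``paired'' in the even case). So most of the necessary conditions in items (i)--(v) will turn out to be parity and ``one-simple-side'' conditions forced by which minimal block in Table \ref{tab:bordos-colecao} can have boundary components drawn from Table \ref{tab:bordosAAA}, together with how orbit identifications alter those boundary weights.

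For the ($\Rightarrow$) direction, I would assume $L_v$ is realized as a GS isolating block $N$ whose boundary components are drawn from Table \ref{tab:bordosAAA}, and by Lemma \ref{lem:passageway_minimal} write $N = N_0/\sim$ with $N_0$ minimal. The minimal block $N_0$ must then appear in Table \ref{tab:bordos-colecao} with every boundary component already belonging to Table \ref{tab:bordosAAA}; this selects a short list of admissible minimal blocks for each type and nature. For each case I would:
\begin{itemize}
    \item[(i)] Observe that in the regular or $\mathcal{D}$-saddle case with $e_v^{\pm}=1,\ e_v^{\mp}=2,\ |\mathcal{B}^+-\mathcal{B}^-|=1$, the relevant minimal block has boundaries of weights $(1,1,1)$, and each passageway identification adds $1$ to the weight on exactly one boundary component on one specified side; parity of each $b_i^{\mp}$ is thus locked to the parity of $\mathcal{B}^\pm$.
    \item[(ii)] Use that the Whitney minimal block of Table \ref{tabela:Hwhitney} has a single side of weight $2$ and the two opposite circles; passageways must be glued on the ``figure-eight'' side, forcing evenness of $\mathcal{B}^\pm$ and leaving exactly one of $b_1^{\mp},b_2^{\mp}$ equal to $1$.
    \item[(iii)] For $\mathcal{D}_{ss_s}$-type blocks with $e_v^{\pm}=e_v^{\mp}=2$, identify the two minimal archetypes in Table \ref{doubleSSsblocks}, and observe that passageways on each of them force either the ``split'' configuration $\{1,\mathcal{B}^\pm-3\}$ or the ``balanced'' configuration $\{b_1^\pm-1,b_2^\pm-1\}$, while the evenness of $b_i^\pm$ comes from the branch-pattern on the opposite side.
    \item[(iv)--(v)] For $e_v^{\mp}=3$ and $e_v^{\mp}=4$, appeal to the minimal blocks in Table \ref{doubleSSsblocks} with three and four exiting components respectively, where the ``extra'' edges can only support passageways entering on one fixed component, forcing all but one (resp.\ two) weights to remain $1$ and the parities to agree with $\mathcal{B}^\pm$.
\end{itemize}

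For the ($\Leftarrow$) direction, given any $L_v$ meeting the listed conditions I would produce a realization constructively: pick the minimal block from Table \ref{tab:bordos-colecao} whose boundary components already live in Table \ref{tab:bordosAAA} and whose outdegree/indegree matches that of $L_v$; then, exactly as in the identification sequence of Figure \ref{fig:sequence1} from the proof of Theorem \ref{teo:linear}, add passageways one at a time, each identification of a pair of orbits raising a chosen boundary component's weight by $1$ and moving it to the next entry of Table \ref{tab:bordosAAA}. The constraints (i)--(v) are precisely what is needed so that each successive identification can be carried out on the prescribed boundary component without collapsing connected components, so after finitely many steps the block realizes $L_v$ with the required boundary components.

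The main obstacle will be the case-(iii) bookkeeping, where the semi-graph allows two genuinely different realizations (the ``split'' and ``balanced'' choices) and I must show both that these are the only possibilities and that both are actually achievable by passageway identifications starting from the minimal blocks in Table \ref{doubleSSsblocks}. A secondary difficulty will be making sure, in items (iv) and (v), that the passageway identifications never fuse two distinct exiting boundary components into one; this requires checking that each added passageway connects a boundary component only to itself, which is ensured by the structure of the branched $1$-manifolds in Table \ref{tab:bordosAAA} but must be verified carefully for each admissible minimal starting block.
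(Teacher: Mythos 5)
Your proposal follows essentially the same route as the paper: necessity via Lemma \ref{lem:passageway_minimal} (every block with passageways is a quotient of a minimal block from Theorem \ref{teo:colecao}) combined with the parity structure of Table \ref{tab:bordosAAA}, and sufficiency via orbit identifications/tubular flows attached to a minimal block, split into the same degree-based cases. The only substantive difference is one of precision: the paper pins down the parity claims in items (i), (iv), (v) by intersecting the stable (resp.\ unstable) manifold of $p$ with $N^{\pm}$ to obtain the points whose removal splits $N^{\pm}$ into components with even numbers of branch points, each of which must feed a single component of $N^{\mp}$ --- this is exactly the mechanism you gesture at with ``parity is locked'' and would need to spell out.
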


\begin{proof}

The proof relies heavily on Lemma \ref{lem:passageway_minimal}, which states that all GS isolating blocks with passageways for GS singularities can be constructed from minimal GS isolating blocks via identifications of pair of orbits.

$(\Rightarrow)$ First, suppose $L_{v}$ can be realized {as a GS flow on} a GS isolating block $N$ with the distinguished branched $1$-manifolds of Table \ref{tab:bordosAAA} as boundary components. Now we verify that $L_{v}$ satisfies conditions $i)$ through $v)$:

\begin{itemize}

\item[i) ] 
{Let $W^{s}(p)$ (resp. $W^{u}(p)$) be the stable (resp. unstable) manifold of the singularity $p \in N$. Then, whether $p$ is a singularity of type $\mathcal{R}$ or $\mathcal{D}$, it follows that $W^{s}(p) \cap N^{+}$ (resp. $W^{u}(p) \cap N^{-}$) is formed by two points $q_{1}$ and $q_{2}$. In the case $\mathcal{B}^{\pm}$ is odd, we have that each connected component of $N^{\pm} \setminus \{q_{1}, q_{2}\}$ has an even number of branch points. Since all folds associated to branch points  on the same connected component of $N^{+} \setminus \{q_{1}, q_{2}\}$ (resp. $N^{-} \setminus \{q_{1}, q_{2}\}$) must exit (resp. enter) $N$ through the same connected component of $N^{-}$ (resp. $N^{+}$), it follows that each connected component of $N^{-}$ (resp. $N^{+}$) has an even number of branch points. Hence, we must have $b_{1}^{\mp}$ and $b_{2}^{\mp}$ both odd.}

\item[ii) ]
{If $N$ is a minimal GS isolating block, then $\mathcal{B}^{\pm} = 2$ and $b_{1}^{\mp} = b_{2}^{\mp} = 1$. Hence, $\{b_{1}^{\mp}, b_{2}^{\mp}\}$ is equal to $\{1, \mathcal{B}^{\pm}-1\}$. Moreover, $N^{\pm}$ is a figure eight. Denote by $q$ the branch point of $N^{\pm}$, we have that $N^{\pm} \setminus \{q\}$ has two connected components such that orbits that enter (resp. exit) $N$ through each connected component of $N^{\pm} \setminus \{q\}$ exit (resp. enter) $N$ through  different connected components of $N^{\mp}$.}

{If $N$ is not minimal, Lemma \ref{lem:passageway_minimal} asserts that $N$ can be constructed from a minimal isolating block $N_{0}$. Denote by $q$ the branch point of $N_{0}^{\pm}$. Since we cannot identify orbits that enter (resp. exit) $N_{0}^{\pm}$ through different connected components of $N_{0}^{\pm} \setminus \{q\}$, it follows that $N$ also has a branch point $\tilde{q}$ such that $N^{\pm} \setminus \{\tilde{q}\}$ has two connected components.} 

{Note that every distinguished branched $1$-manifold of Table \ref{tab:bordosAAA} with odd weight remains connected after the removal of one branch point. Thus, $\mathcal{B}^{\pm}$ cannot be odd. Hence, $\mathcal{B}^{\pm}$ is even. Furthermore, after removing the branch point that disconnects a distinguished branched $1$-manifold of Table \ref{tab:bordosAAA} with even weight, the remaining $\mathcal{B}^{\pm} -2$ branch points all lie in the same connected component. Thus, all folds associated to these branch points exit $N^{\mp}$ through the same connected component. Therefore, we have that $\{b_{1}^{\mp}, b_{2}^{\mp}\}$ is equal to $\{1, \mathcal{B}^{\pm} - 1\}$. }

\item[iii) ] If $L_{v}$ is minimal, then {$\mathcal{B}^{\pm} = 4$, where $b_{1}^{\pm} = b_{2}^{\pm} = 2$ and $\mathcal{B}^{\mp} = 2$, with $b_{1}^{\mp} = b_{2}^{\mp} = 1$. Hence, $b_{1}^{\pm}$ and $b_{2}^{\pm}$ are even and $\{b_{1}^{\mp}, b_{2}^{\mp}\}$ is equal to  $\{1, \mathcal{B}^{\pm} - 3\}$  which coincides with $\{b_{1}^{\pm} - 1, b_{2}^{\pm} - 1\}$. Moreover,} $N^{\pm} = N_1^{\pm}\cup N_2^{\pm}$ is the disjoint union of two figures eight. Denote by $q_j$  the branch point of $N^{\pm}_j$  for $j\in\{1,2\}$.    As in item $ii)$, we have that $N_j^{\pm} \setminus \{q\}$ has two connected components such that orbits that enter (resp. exit) $N$ through each connected component of $N_j^{\pm} \setminus \{q\}$ exit (resp. enter) $N$ through  different connected components of $N^{\mp}$.
Also, by the same argument used in item $ii)$, one shows that if $L_{v}$ is not minimal, then $b_{1}^{\pm}$ and $  b_{2}^{\pm}$ are even.

{Now consider the case where}  $L_{v}$ has weights $b_{1}^{\pm} >2$ and $ b_{2}^{\pm}>2$, i.e., $N^{\pm} = N^{\pm}_1 \cup N^{\pm}_2$  is the disjoint union  of two  distinguished branched   $1$-manifolds  of weights   $b_{1}^{\pm}$ and $ b_{2}^{\pm}$, respectively. {Then it follows}  that
 $N$ has $(b_{1}^{\pm}-2) +(b_{2}^{\pm}-2)$ passageways. By using a two step process, consider the $b_{1}^{\pm}-2$ passageways determined by $N_{1}^{\pm}$.   As in item $ii)$ the folds that  {intersect} $N_{1}^{\pm}\setminus\{p\}$ must exit (resp. enter) either $N_{1}^{\mp}$ or $N_{2}^{\mp}$. Without loss of generality, we can assume that these orbits exit (resp., enter) $N_{1}^{\mp}$.  At this point, this implies that the weight of  $N_{1}^{\mp}$  is greater or equal to $b_{1}^{\pm} -1$. Now consider  the  folds  that  {intersect} $N_{2}^{\pm}\setminus\{q\}$ which must exit (resp., enter) either through $N_{1}^{\mp}$ or $N_{2}^{\mp}$. If they exit (resp., enter) through $N_{1}^{\mp}$  we are in the case  that $\{b_{1}^{\mp}, b_{2}^{\mp}\}$ is equal to $\{ \mathcal{B}^{\pm} - 3 , 1\}$; 
 if they exit (resp., enter) through $N_{2}^{\mp}$  we are in the case  that $\{b_{1}^{\mp}, b_{2}^{\mp}\}$ is equal to  $\{b_{1}^{\pm} - 1, b_{2}^{\pm} - 1\}$.

\item[iv) ] 
{If $N$ is a minimal GS isolating block, then $\mathcal{B}^{\pm} = 3$ and $b_{1}^{\mp} = b_{2}^{\mp} = b_{3}^{\mp} = 1$. Moreover, $N^{\pm}$ has two branch points $q_{1}$ and $q_{2}$ such that $N^{\pm} \setminus \{q_{1}, q_{2}\}$ has four connected components. In addition, orbits that enter (resp. exit) $N$ through two of these connected components exit (resp. enter) $N$ through the same connected component of $N^{-}$ (resp. $N^{+}$), while orbits entering (resp. exiting) $N$ through the other two connected components of $N^{\pm} \setminus \{q_{1}, q_{2}\}$ exit (resp. enter) each through a distinct connected component of the remaining ones in $N^{\mp}$.}

{In the case $N$ is not minimal, let $W^{s}(p)$ (resp. $W^{u}(p)$ be the stable (resp. unstable) manifold of the singularity $p \in N$. Then, $W^{s}(p) \cap N^{+}$ (resp. $W^{u}(p) \cap N^{-}$) is also formed by two branch points $\{\tilde{q_{1}}, \tilde{q_{2}}\}$ such that $N^{\pm} \setminus \{\tilde{q_{1}}, \tilde{q_{2}}\}$ has the same properties of $N_{0}^{\pm} \setminus {q_{1}, q_{2}}$ described in the previous paragraph for a minimal isolating block $N_{0}$. Since all branch points other than $\tilde{q_{1}}$ and $\tilde{q_{2}}$ belong to at most two connected components of $N^{\pm} \setminus \{\tilde{q_{1}}, \tilde{q_{2}}\}$ and $N^{\mp}$ has three connected components, it follows that at least one connected component of $N^{\mp}$ has weight $1$, that is, $b_{i}^{\mp} = 1$ for at least one $i \in \{1, 2, 3\}.$ Furthermore, if $\mathcal{B}^{\pm}$ is odd, each connected component of $N^{\pm} \setminus \{\tilde{q_{1}}, \tilde{q_{2}}\}$ has an even number of branch points. Hence, $b_{1}^{\mp}, b_{2}^{\mp}$ and $b_{3}^{\mp}$ are all odd. }

\item[v) ] In this case, the orbits through {each one of} the four arcs formed by $N_0^{\pm} \setminus \{p, \ q\}$  exit through a distinct connected component of $N^{\mp}$. The remaining argument is analogous to item $iv)$.

\end{itemize}

$(\Leftarrow)$ Now, we show that all semi-graphs $L_{v}$ of Table \ref{tab:bordos-colecao} subject to conditions $i), ii), iii), iv), v)$ of Lemma \ref{lem:FamilyB} are realizable as a GS flow on a GS isolating block with boundary components in Table \ref{tab:bordosAAA}.

Note that, if $L_{v}$ is minimal, then Theorem \ref{teo:colecao} asserts that there is a minimal GS isolating block with boundary components in Table \ref{tab:bordosAAA} that realizes $L_{v}$. Some of these blocks can be seen in Tables \ref{NEWconeblocks}, \ref{NEWwhitneyblocks}, \ref{doubleSAblocks} and \ref{doubleSSsblocks}.

If $L_{v}$ is not minimal, we proceed as follows:

First, we consider a minimal GS isolating block $N_{0}$ for a singularity $p$, such that the Conley index of $p$ is the same that which $v$ is labelled with, $N_{0}^{\pm}$ has $e_{v}^{\pm}$ connected components, and $N_{0}^{\pm}$ belongs to Table \ref{tab:bordosAAA}.

Then in order to create passageways in $N_{0}$, we analyze the possible ways of embedding $N_{0}^{+}$ in a distinguished branched $1$-manifold $N^{+}$ of Table \ref{tab:bordosAAA} with weight $\mathcal{B}^{+}$, such that the folds  {associated to} branch points of $N^{+} \setminus N_{0}$ exit the connected components of $N_{0}^{-}$ according to the weights $b_{i}^{-}$ of $L_{v}$, for $i \in \{1, \ldots, e_{v}^{-}\}$.

For this purpose, we divide the semi-graphs $L_{v}$ of Table \ref{tab:bordos-colecao} in the following cases: 

\begin{itemize}

\item[a) ] if $e_{v}^{+} = e_{v}^{-} = 1$, Lemma \ref{lem:FamilyB} imposes no restrictions on $L_{v}$. %So, let $N^{+}$ be the distinguished branched $1$-manifold of Table \ref{tab:bordosAAA} with weight $\mathcal{B}^{+}$. Next,
So, suppose $v$ is labelled with a singularity of type $\mathcal{R}, \mathcal{C}$ or $\mathcal{D}$ and consider {any} embedding $f: N_{0}^{+} \hookrightarrow N^{+}$. Then consider a tubular flow on $\overline{N^{+} \setminus f(N_{0}^{+})} \times [0,1]$ and for each point $q_{i} \in \overline{N^{+} \setminus f(N_{0}^{+})} \cap f(N_{0}^{+})$, identify the orbit of $\overline{N^{+} \setminus f(N_{0}^{+})} \times [0,1]$ that passes through in $q_{i}$ with the orbit of $N_{0}$ that passes through in $f^{-1}(q_{i})$. The resulting block $N$ is a realization of $L_{v}$ such that the boundary components of $N$ belong to Table \ref{tab:bordosAAA}.

The case where $v$ is labelled with a singularity of type $\mathcal{W}$ requires an intermediate step before realizing the embedding $f: N_{0}^{+} \hookrightarrow N^{+}$ only if $N^{+}$ has odd weight. This is due to the fact that $N_{0}^{+}$ is a figure eight and it cannot be embedded in a distinguished branched $1$-manifold of Table \ref{tab:bordosAAA} of odd weight. Hence, the  identification {shown in Figure \ref{fig:priorW}} must be performed prior to the embedding.

\begin{figure}[h] 
\begin{equation*}
  \begin{tikzcd}
   \includegraphics[align=c,scale=0.6]{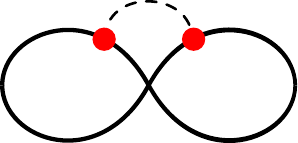} \hspace{0.4cm} \rightarrow \hspace{0.4cm} \includegraphics[align=c,scale=0.6]{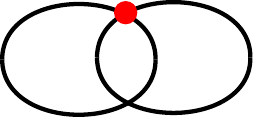} \hspace{0.4cm} 
  \end{tikzcd}
\end{equation*}
\caption{Creating a branch point from a figure eight.}
\label{fig:priorW}
\end{figure}

\item[b) ] if $1 = e_{v}^{+} < e_{v}^{-}$, then $L_{v}$ is subject to one of the conditions $i), ii), iv)$ or $v)$ of Lemma \ref{lem:FamilyB}. 

Let $W^{s}(p)$ be the stable manifold of the singularity $p \in N_{0}$. Note that, if $p$ is a singularity of type $\mathcal{R}$, $W^{s}(p) \cap N_{0}^{+}$ is the disjoint union of two points. In the case $p$ is a Whitney or double crossing singularity, $W^{s}(p) \cap N_{0}^{+}$ is exactly the branch points in $N_{0}^{+}$, which is equal to $1$ or $2$ branch points, respectively. By removing these points from $N_{0}^{+}$ we are left with two connected components in the case $p$ is of type $\mathcal{R}$ or $\mathcal{W}$, and four connected components in the case $p$ is of type $\mathcal{D}$. The orbits that pass through each of these connected components exit $N_{0}$ through a connnected component of $N_{0}^{-}$.

With this at hand, consider an embedding $f: N_{0}^{+} \hookrightarrow N^{+}$ such that, for each $j \in \{1, \ldots, e_{v}^{-}\}$ there is a connected component $K_{j}^{+}$ of $N^{+} \setminus (f(W^{s}(p) \cap N_{0}^{+})$ with $b_{j}^{-} - 1$ branch points. Now, consider a tubular flow on each connected component of $\overline{N^{+} \setminus f(N_{0}^{+})} \times [0,1]$. For each point $q_{i} \in \overline{N^{+} \setminus f(N_{0}^{+})} \cap f(N_{0}^{+})$, identify the orbit of $\overline{N^{+} \setminus f(N_{0}^{+})} \times [0,1]$ that passes through in $q_{i}$ with the orbit of $N_{0}$ that passes through in $f^{-1}(q_{i})$. The resulting block $N$ is a realization of $L_{v}$ such that the boundary components of $N$ belong to Table \ref{tab:bordosAAA}.

\item[c) ] {if $e_{v}^{+} = 2, e_{v}^{-} = 1$, then Lemma \ref{lem:FamilyB} imposes no restrictions on $L_{v}$. In this case, $N_{0}^{+}$ is the disjoint union of two figures eight, i.e., $N_{0}^{+} = F^{8}_{1} \cup F^{8}_{2}$. }

{Let $N^{+} = N_{1}^{+} \cup N_{2}^{+}$ be a distinguished branched $1$-manifold such that $N_{i}^{+}$ belongs to Table \ref{tab:bordosAAA} and has weight $b_{i}^{+}$ for $i \in \{1, 2\}$. In the case that the weights $b_{1}^{+}$ and $b_{2}^{+}$ are both odd, or in the case one of them is odd and the other is an even number greater than $2$, we must perform the identification shown in Figure \ref{fig:priorW} to $F_{1}^{8}$ and $F_{2}^{8}$ prior to the embeddings $f_{1}: F_{1}^{8} \hookrightarrow N_{1}^{+}$ and $f_{2}: F_{2}^{8} \hookrightarrow N_{2}^{+}$. Then one considers the same tubular flow and identifications of orbits of items $a)$ and $b)$. The resulting block $N$ is a realization of $L_{v}$ such that the boundary components of $N$ belong to Table \ref{tab:bordosAAA}. }

\item[d) ] if $e_{v}^{+} = e_{v}^{-} = 2$, then consider $N^{+} = N_{1}^{+} \cup N_{2}^{+}$ such that $N_{i}^{+}$ belongs to Table \ref{tab:bordosAAA} and has weight $b_{i}^{+}$, for $i \in \{1, 2\}$. According to Table \ref{tab:bordos-colecao}, $v$ can be labelled with a singularity of type $\mathcal{C}$ or $\mathcal{D}$. 

If $v$ is labelled with a cone singularity, $N_{0}^{+}$ is the disjoint union of two circles, i.e. $N_{0}^{+} = S^{1}_{1} \cup S^{1}_{2}$ and Lemma \ref{lem:FamilyB} imposes no restrictions on $L_{v}$. Thus, we proceed as in case $a)$ for $f_{1}: S^{1}_{1} \hookrightarrow N_{1}^{+}$ and for $f_{2}: S^{1}_{2} \hookrightarrow N_{2}^{+}$.

Otherwise, if $v$ is labelled with a singularity of type $\mathcal{D}$,  $N_{0}^{+}$ is the disjoint union of two figures eight, i.e. $N_{0}^{+} = F^{8}_{1} \cup F^{8}_{2}$ and $L_{v}$ is subject to condition $iii)$ of Lemma \ref{lem:FamilyB}. Thus, we proceed as in case $b)$ for $f_{1}: F^{8}_{1} \hookrightarrow N_{1}^{+}$ and for $f_{2}: F^{8}_{2} \hookrightarrow N_{2}^{+}$.

Hence, the resulting block $N$ is a realization of $L_{v}$ such that the boundary components of $N$ belong to Table \ref{tab:bordosAAA}.

\end{itemize}

Cases $a), \ b)$ , $c)$ and $d)$ prove that $L_{v}$ is realizable for all semi-graphs in Table \ref{tab:bordos-colecao} subject to $i), \ ii), \ iii), \ iv)$ and $v)$.

\end{proof}

The next theorem is a realization theorem which includes bifurcation vertices and labellings with both singularities of type $\mathcal{W}$ and $\mathcal{D}$, and as such will be a generalization of Theorems \ref{teo:linear} and \ref{Teo:W}.

\begin{Teo}\label{teo:families}
Let $L$ be a GS graph labelled with singularities of type $\mathcal{R}, \mathcal{C}, \mathcal{W}$ and $\mathcal{D}$ such that each GS semi-graph $L_{v}$ belongs to Table \ref{tab:bordos-colecao}. Furthermore, suppose that:

\begin{itemize}

\item[i) ] $L_{v}$ satisfies conditions $i), ii), iii), iv), v), vi)$ of Lemma \ref{lem:FirstFamily}, for all vertices $v \in L$; or 
\item[ii) ] $L_{v}$ satisfies  conditions 
$i), ii), iii), iv), v)$ of 
Lemma \ref{lem:FamilyB}, for all vertices $v \in L$, %with degree greater or equal to $3$. 
\end{itemize}
then $L$ is realizable.

\end{Teo}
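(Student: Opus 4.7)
The plan is to reduce the global realization to the local realizations provided by Lemmas \ref{lem:FirstFamily} and \ref{lem:FamilyB}, exploiting the crucial fact that in both Table \ref{tab:FirstFamily} and Table \ref{tab:bordosAAA}, the distinguished branched $1$-manifold chosen as a boundary component of a minimal block depends \emph{only} on the weight $w$ of the corresponding edge. Thus, once one of the hypotheses (i) or (ii) is assumed, the local realizations at any two endpoints of the same edge will automatically share the same (up to homeomorphism) distinguished branched $1$-manifold, making the gluings possible without any consistency obstruction.

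First I would, under hypothesis (i), invoke Lemma \ref{lem:FirstFamily} at each vertex $v$ of $L$ to obtain a GS isolating block $N_{v}$ realizing the semi-graph $L_{v}$ and whose entering and exiting boundary components are the distinguished branched $1$-manifolds of Table \ref{tab:FirstFamily} of weights matching those on the edges incident to $v$. Under hypothesis (ii), I would do the same using Lemma \ref{lem:FamilyB} with Table \ref{tab:bordosAAA}. Either way, for each edge $e$ of $L$ with weight $w$ joining $v_{1}$ to $v_{2}$, the connected component of $N_{v_{1}}^{-}$ corresponding to $e$ is homeomorphic to the connected component of $N_{v_{2}}^{+}$ corresponding to $e$, so one can fix a homeomorphism between them and glue $N_{v_{1}}$ to $N_{v_{2}}$ along a collar via a tubular flow. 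Performing these gluings simultaneously over all edges produces a compact space $\mathbf{M}$ with a continuous flow $X_{t}$.

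To finish, I would check that $\mathbf{M}$ is a closed GS $2$-manifold, that $X_{t}$ arises from a vector field in $\Sigma^{r}_{0}(\mathbf{M})$, and that the Lyapunov functions on each $N_{v}$ can be patched across the collars to yield a continuous Lyapunov function whose associated Lyapunov graph is $L$. Since each $N_{v}$ is already a GS block, its local simple-singularity charts of Definition \ref{def:localcharts} are preserved by gluing along homeomorphic branched $1$-manifolds in the regular stratum plus fold loci, and the Poincaré--Hopf conditions encoded in $L$ guarantee compatibility at every vertex.

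The expected main subtlety is not the existence of matching boundary components (which is automatic from the tables), but rather that a distinguished branched $1$-manifold of given weight may admit nontrivial self-homeomorphisms permuting its branch points or reversing orientation, so different gluing choices can a priori yield different closed manifolds $\mathbf{M}$. However, this affects only the homeomorphism type of the realization and not the existence of \emph{some} realization: any choice of matching homeomorphism produces a valid GS flow whose Lyapunov graph is $L$, precisely because the local data at each vertex depended only on the weight and not on a preferred identification of the branch points on its incident boundaries. Hence $L$ is realizable under either hypothesis, which proves the theorem.
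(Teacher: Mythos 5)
Your proposal is correct and follows essentially the same route as the paper: both reduce the global realization to the local realizations supplied by Lemma \ref{lem:FirstFamily} (with Table \ref{tab:FirstFamily}) or Lemma \ref{lem:FamilyB} (with Table \ref{tab:bordosAAA}), and then glue the resulting blocks edge by edge, the key point being that each table assigns a single distinguished branched $1$-manifold per weight so that matching boundary components are automatically homeomorphic. Your additional remarks on patching the Lyapunov functions and on the non-uniqueness of the gluing homeomorphisms only elaborate on what the paper leaves implicit.
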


\begin{proof}

In the case condition $i)$ is satisfied, it follows from Lemma \ref{lem:FirstFamily} that all semi-graphs $L_{v}$ can be realized as a GS flow on a GS isolating block with the distinguished branched $1$-manifolds of Table \ref{tab:FirstFamily} as boundary components. Hence, one glues these isolating blocks according to $L$.

Similarly, if condition $ii)$ is satisfied, then all semi-graphs $L_{v}$ can be realized as GS flows on GS isolating blocks with boundary components on Table \ref{tab:bordosAAA}, as stated in Lemma \ref{lem:FamilyB}. Hence, all isolating blocks can be glued together according to $L$. Thus, $L$ is realizable.

\end{proof}

We remark that  the conditions of Theorem \ref{teo:families} are sufficient to ensure the realization of a GS graph under those hypothesis although they are far from  necessary, as shown in the next example.

\begin{Ex}

In figure \ref{fig:Complementar}, we present a GS graph $L$ that does not satify Lemma \ref{lem:FirstFamily}, because of the presence of singularities of type $\mathcal{D}$ and attracting or repelling natures. Also, $L$ does not satisfy Lemma \ref{lem:FamilyB} because the bifurcation vertex, which is labelled with a singularity of type $\mathcal{W}$, has an odd weight in its positively incident edge. However, $L$ is realizable as a GS flow on a GS $2$-manifold. As we can see in Figure \ref{fig:Complementar}, the distinguished branched $1$-manifold on the boundaries with weight $5$ does not belong to Table \ref{tab:bordosAAA} nor Table \ref{tab:FirstFamily}.    

\begin{figure}[h] 
\centering
\begin{equation*}
  \begin{tikzcd}
   \includegraphics[align=c,scale=0.6]{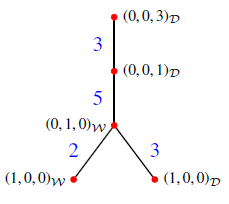} \xrightarrow{Realization} \hspace{0.1cm} \includegraphics[align=c,scale=0.3]{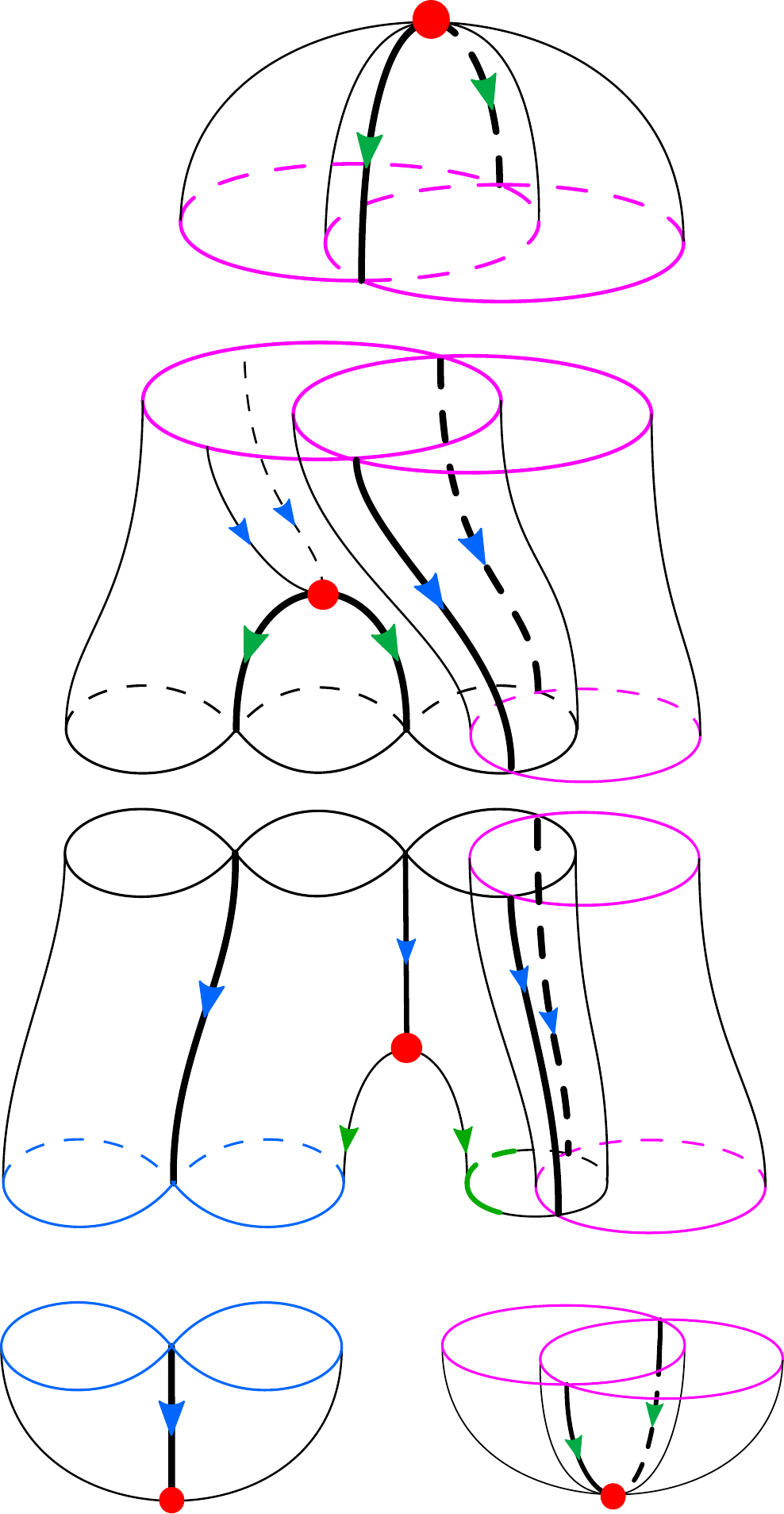} 
  \end{tikzcd}
\end{equation*}
\caption{Realization of a GS graph as a GS flow}
\label{fig:Complementar}
\end{figure}

\end{Ex}

Also, we remark that bifurcation vertices labelled with singularities of type $\mathcal{T}$ cannot be realized with the distinguished branched $1$-manifolds of  Table \ref{tab:bordosAAA} nor of Table \ref{tab:FirstFamily}.

The realization theorems presented in this section were obtained subject to  some type of simplification. For instance, restrictions on the type of  singularities with which the graph is labelled, or restrictions on the weights of the edges  or on the degree of the vertices. 
 Had this not been done, the increase of weights on the edges  would imply in a greater number of choices of distinguished branched $1$-manifolds and the local information on the graph is not sufficient to guarantee the gluing of the isolating blocks as indicated by the graph. However, even if we can not guarantee the existence of a triple $(\mathbf{M},X_{t},f)$, we still have control over the Euler characteristic of a possible realization on $M$. The reason for this is that the Euler characteristic can be computed in terms of the  natures and types of GS singularities, as will be shown in the next section.

\section{Euler Characteristic of  GS 2-manifolds}

The Euler characteristic can be defined for any  topological space  $\mathbf{T}$.  and it is  denoted by  $\mathcal{X}(\mathbf{T})$. It  is given by:
\begin{equation}\label{euler}
    \mathcal{X}(\mathbf{T}) = \sum_{i\in \mathbb{N}}{(-1)^{i}\beta_{i}},
\end{equation} 
where $\beta_{i}$  is the rank of  the $i$-th homology group $H_{i}(\mathbf{T})$, i.e., the  $i$-th Betti number of $\mathbf{T}$.

In \cite{hernan2019},  a filtration  of a closed GS $2$-manifold  $\mathbf{M}$  was constructed: $$G_{0} \subset G_{1} \subset \ldots \subset G_{m} = \mathbf{M},$$ such that $G_{k}$ contains exactly $k$ singularities and $(G_{i}, G_{i-1})$  is an index pair for the  $i$-th singularity of $\mathbf{M}$. Then, from the long exact sequence associated to the pair $(G_{i}, G_{i-1})$: $$\ldots \overset{p_{j}}{\longrightarrow} H_{j}(G_{i}, G_{i-1}) \overset{\partial_{j}}{\longrightarrow} H_{j-1}(G_{i-1}) \overset{i_{\ast}}{\longrightarrow} H_{j-1}(G_{i}) \overset{p_{j-1}}{\longrightarrow} H_{j-1}(G_{i},G_{i-1}) \overset{\partial_{j-1}}{\longrightarrow} \ldots$$ it was proven  that the Euler characteristic of $\mathbf{M}$ is equal to  the alternating sum  of the numerical Conley indices of the singularities, i.e.:
\begin{equation}\label{Conley}
    \mathcal{X}(\mathbf{M}) = \sum_{p_{i} \in Sing(\mathbf{M})}^{}{(h_{0}^{i} - h_{1}^{i} + h_{2}^{i})}.
\end{equation}

In what follows, we give an alternative  formula  for (\ref{Conley}) for closed GS  $2$-manifolds, by expressing  the Euler characteristic  in terms of  the total number $a$, $s$  and $r$ of  attracting, saddle and repelling natures, respectively,  of the GS singularities. In this sense, we remark that the total number of natures $a$, $s$ and $r$ of singularities of type $\mathcal{D}$ is equal to two. For instance, a GS singularity of type $\mathcal{D}$ and nature $ss_{s}$ or $ss_{u}$ has two saddle natures, and zero attracting and repelling natures. On the other hand, the total number of natures $a$, $s$ and $r$ of singularities of type $\mathcal{T}$ is equal to three. A GS singularity of type $\mathcal{T}$ and nature $ssa$  has two saddle natures, one attracting nature and zero repelling nature. Meanwhile, for an attractor singularity of type $\mathcal{T}$, we have that $a = 3$ and $s = r = 0$.

\begin{Prop}\label{prop:X}
Let $L$ be a GS graph. If  $\mathbf{M}$ is a realization of  $L$, then: $$\mathcal{X}(\mathbf{M}) = a - s + r + \frac{W}{2} + T,$$ where $W$ and $T$ are the number of vertices in $L$ labelled with singularities of type $\mathcal{W}$ and $\mathcal{T}$, respectively, while  $a$, $s$ and $r$ are, respectively, the total number of attracting, saddle and repelling natures of the singularities with which the vertices are labeled.
\end{Prop}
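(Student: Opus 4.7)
The plan is to invoke the identity $\mathcal{X}(\mathbf{M}) = \sum_{p_i} (h_0^i - h_1^i + h_2^i)$ of equation~(\ref{Conley}), established in \cite{hernan2019}, and to evaluate the right-hand side by computing the numerical Conley indices for each type and nature of GS singularity. For each singularity $p$, I would take the minimal isolating block $N$ furnished by Theorem~\ref{teo:colecao} with exit set $N^-$ the distinguished branched $1$-manifold listed in Table~\ref{tab:bordos-colecao}. Since the block $N$ is contractible in most cases (and in the remaining cases $\chi(N)$ is easily read off from the Poincaré-Hopf condition (\ref{PH}) applied to the semi-graph), the long exact sequence of the pair $(N, N^-)$ yields
\[
h_0 - h_1 + h_2 \;=\; \chi(N) \;-\; \chi(N^-),
\]
reducing the computation to Euler characteristics of the explicit $1$-complexes displayed in Table~\ref{tab:bordos-colecao}.

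Next, I would carry out the case enumeration of the minimal GS isolating blocks given by Theorem~\ref{teo:colecao} and tabulate $h_0 - h_1 + h_2$ against the nature label $\eta$. For types $\mathcal{R}$ and $\mathcal{C}$, the alternating sum equals $+1$ for the natures $a$ and $r$ and $-1$ for $s$, matching the per-singularity contribution $a(\eta) - s(\eta) + r(\eta)$ exactly. For types $\mathcal{W}$, $\mathcal{D}$ and $\mathcal{T}$, however, the per-singularity alternating sum differs from $a(\eta) - s(\eta) + r(\eta)$ by a correction coming from the first Betti number of $N^-$, a term that one reads directly off Table~\ref{tab:bordos-colecao}. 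In particular one gets $h_0 - h_1 + h_2 = 1 - \chi(N^-)$ whenever $N$ is contractible, so the Whitney repeller contributes $2$ (versus nature count $1$), the double repeller $3$ (versus $2$), and the triple repeller $7$ (versus $3$), while attracting natures generically under-contribute by the symmetric amount.

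The main obstacle is the global bookkeeping needed to show that, upon summing these per-singularity discrepancies over all singularities of $\mathbf{M}$, they collapse exactly to $W/2 + T$. The key ingredient is the fold structure $\mathcal{F}(\mathbf{M}) = \mathbf{M}(\mathcal{D}) \setminus \mathcal{S}_\mathcal{D}$: every fold arc has precisely one $\alpha$-limit and one $\omega$-limit endpoint, each lying at a Whitney, double-crossing or triple-crossing singularity, and the counting identity $2F_a = W + 2|\mathcal{S}_\mathcal{D}| + 6T$ (one fold endpoint per Whitney, two per double-crossing stationary point, six per triple-crossing) forces the double-crossing discrepancies to cancel in pairs along each fold, leaving exactly $W/2$ from the Whitney endpoints and $T$ from the triple-crossing endpoints. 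Combining this with (\ref{Conley}) produces the claimed identity $\mathcal{X}(\mathbf{M}) = a - s + r + W/2 + T$.
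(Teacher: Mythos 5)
Your proposal is correct and follows essentially the same route as the paper: both start from the Conley-index sum (\ref{Conley}), evaluate the per-singularity alternating sums (you via $\chi(N)-\chi(N^-)$ for the minimal blocks of Table \ref{tab:bordos-colecao}, the paper by citing the index tables from \cite{hernan2019}), and absorb the leftover discrepancies into $W/2+T$ by equating the $\alpha$-limit and $\omega$-limit counts of the folds, which is exactly the paper's identity (\ref{eq:folds}). The only cosmetic difference is that you organize the cancellation fold-by-fold (each fold contributing $+1/2$ at its $\alpha$-endpoint and $-1/2$ at its $\omega$-endpoint), whereas the paper performs the equivalent global algebraic manipulation in (\ref{eq:termo})--(\ref{eq:final}).
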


\begin{proof}
One has that 
\begin{equation}\label{eq:ConleyEuler} \mathcal{X}(\mathbf{M}) = \sum_{p_{i} \in Sing(\mathbf{M})}^{}{(h_{0}^{i} - h_{1}^{i} + h_{2}^{i})}.
\end{equation}

Denote by  $\mathcal{P}_{\eta}$ the total number of vertices in $L$ labeled with a GS singularities of type $\mathcal{P}$, where $\mathcal{P}\in \{\mathcal{R}, \mathcal{C}, \mathcal{W}, \mathcal{D}, \mathcal{T}\}$,  with nature $\eta$, that is, a singularity $p \in \mathcal{H}^{\mathcal{P}}_{\eta}$.

In what follows, we summarize the numerical Conley index $(h_{0}, h_{1}, h_{2})$ of $p \in \mathcal{H}^{\mathcal{P}}_{\eta}$, which was computed in \cite{hernan2019}:

\begin{table}[htb]
\centering
\begin{tabular}{c|cccccccccc}
\cellcolor{gray!20} & \cellcolor{gray!20} & \multicolumn{3}{c}{\cellcolor{gray!20} $\mathcal{R}$} & \cellcolor{gray!20} & \cellcolor{gray!20} & \multicolumn{3}{c}{\cellcolor{gray!20} $\mathcal{C}$} & \cellcolor{gray!20} \\
Nature & & $a$ & $s$ & $r$ & & & $a$ & $s$ & $r$ & \\
\hline
& & & & & & & & & & \\
$(h_{0}, h_{1}, h_{2})$ & & $(1,0,0)$ & $(0,1,0)$ & $(0,0,1)$ & & & $(1,0,0)$ & $(0,1,0)$ & $(0,1,2)$ & \\ 
& & & & & & & & & & \\
\end{tabular}
\end{table}

\begin{table}[htb]
\centering
\begin{tabular}{c|ccccccccccc}
\cellcolor{gray!20} & \cellcolor{gray!20} & \multicolumn{4}{c}{\cellcolor{gray!20} $\mathcal{W}$} & \cellcolor{gray!20} & \cellcolor{gray!20} & \multicolumn{4}{c}{\cellcolor{gray!20} $\mathcal{T}$} \\
Nature & & $a$ & $s_{s}$ & $s_{u}$ & $r$ & & & $a$ & $ssa$ & $ssr$ & $r$ \\
\hline
& & & & & & & & & & & \\
$(h_{0}, h_{1}, h_{2})$ & & $(1,0,0)$ & $(0,1,0)$ & $(0,0,0)$ & $(0,0,1)$ & & & $(1,0,0)$ & $(0,1,0)$ & $(0,1,2)$ & $(0,0,7)$ \\
& & & & & & & & & & & \\ 
\end{tabular}
\end{table}

\begin{table}[htb]
\centering
\begin{tabular}{c|ccccccc}
\cellcolor{gray!20} & \cellcolor{gray!20} & \multicolumn{6}{c}{\cellcolor{gray!20} $\mathcal{D}$} \\
Nature & & $a$ & $sa$ & $ss_{s}$ & $ss_{u}$ & $sr$ & $r$ \\
\hline
& & & & & & \\
$(h_{0}, h_{1}, h_{2})$ & & $(1,0,0)$ & $(0,1,0)$ & $(0,3,0)$ & $(0,1,0)$ & $(0,0,1)$ & $(0,0,3)$ \\ 
& & & & & & \\
\end{tabular}
\end{table}

Using  the numerical Conley index of a GS singularity of type $\mathcal{P}$  with nature $\eta$  and substituting this in (\ref{eq:ConleyEuler}), we have that:

\begin{eqnarray}\label{eq:Conley}
\mathcal{X}(\mathbf{M}) & = & (\mathcal{R}_{a} - \mathcal{R}_{s} + \mathcal{R}_{r})\ \ + \ \ (\mathcal{C}_{a} - \mathcal{C}_{s} + \mathcal{C}_{r}) \ \ + \ \ (\mathcal{W}_{a} - \mathcal{W}_{s_{s}} + 0\mathcal{W}_{s_{u}} + 2\mathcal{W}_{r}) \nonumber \\
 & & \\
& & \ \ + \ \ (\mathcal{D}_{a} - \mathcal{D}_{sa} - 3\mathcal{D}_{ss_{s}} - \mathcal{D}_{ss_{u}} + \mathcal{D}_{sr} + 3\mathcal{D}_{r})\ \ + \ \ (\mathcal{T}_{a} - \mathcal{T}_{ssa} + \mathcal{T}_{ssr} + 7\mathcal{T}_{r}) \nonumber 
\end{eqnarray}

Now, adding and subtracting terms in (\ref{eq:Conley}) corresponding to the number of attracting, saddle  and repelling natures within each GS singularity, one has:

\begin{eqnarray}
\mathcal{X}(\mathbf{M}) & = & (\mathcal{R}_{a} - \mathcal{R}_{s} + \mathcal{R}_{r})\ \ + \ \ (\mathcal{C}_{a} - \mathcal{C}_{s} + \mathcal{C}_{r}) \ \ + \ \ (\mathcal{W}_{a} - \mathcal{W}_{s_{s}} - \mathcal{W}_{s_{u}} +  \mathcal{W}_{r}) \nonumber \\
& & \nonumber \\
& & \ \ + \ \ (2\mathcal{D}_{a} - 0\mathcal{D}_{sa} - 2\mathcal{D}_{ss_{s}} - 2\mathcal{D}_{ss_{u}} + 0\mathcal{D}_{sr} + 2\mathcal{D}_{r})\ \ + \ \ (3\mathcal{T}_{a} - \mathcal{T}_{ssa} - \mathcal{T}_{ssr} + 3\mathcal{T}_{r}) \nonumber \\ 
& & \nonumber \\
& & \ \ + \ \ [\ \ (\mathcal{W}_{s_{u}} + \mathcal{W}_{r}) \ \ + \ \  (\mathcal{D}_{ss_{u}} + \mathcal{D}_{sr} + \mathcal{D}_{r}) \ \ - \ \ (\mathcal{D}_{ss_{s}} + \mathcal{D}_{sa} + \mathcal{D}_{a}) \ \ + \ \ (4\mathcal{T}_{r} - 2\mathcal{T}_{a} + 2\mathcal{T}_{ssr})\ \ ] \nonumber
\end{eqnarray}
 
Hence, one has:
\begin{eqnarray}\label{eq:formula}
\mathcal{X}(\mathbf{M}) \ \ = \ \ a - s + r \ \ + \ \ [\ \ (\mathcal{W}_{s_{u}} + \mathcal{W}_{r}) \ \ + \ \ (\mathcal{D}_{ss_{u}} + \mathcal{D}_{sr} + \mathcal{D}_{r}) \ \ - \ \ (\mathcal{D}_{ss_{s}} + \mathcal{D}_{sa} + \mathcal{D}_{a}) \nonumber \\
\nonumber \\
+ \ \ (4\mathcal{T}_{r} - 2\mathcal{T}_{a} + 2\mathcal{T}_{ssr})\ \ ]
\end{eqnarray} 

Since $\mathbf{M}$ is a closed manifold, any fold in $\mathbf{M}$ admits $\alpha$-limit and $\omega$-limit. Counting the total number of folds in $\mathbf{M}$ in terms of the GS singularities that are $\alpha$-limit of each fold and then $\omega$-limit of each fold, one obtains the following equalities:
\begin{eqnarray}\label{eq:folds}
\mbox{ number of folds in } \mathbf{M} & = & (\mathcal{W}_{s_{u}} + \mathcal{W}_{r}) \ \ + \ \ 2(\mathcal{D}_{ss_{u}} + \mathcal{D}_{sr} + \mathcal{D}_{r}) \ \ + \ \ (6\mathcal{T}_{r} + 4\mathcal{T}_{ssr} + 2\mathcal{T}_{ssa}) \ \ \ \ \ \ \ \ \ \ \nonumber \\
& & \nonumber \\
& = & (\mathcal{W}_{s_{s}} + \mathcal{W}_{a}) \ \ + \ \ 2(\mathcal{D}_{ss_{s}} + \mathcal{D}_{sa} + \mathcal{D}_{a}) \ \ + \ \ (6\mathcal{T}_{a} + 2\mathcal{T}_{ssr} + 4\mathcal{T}_{ssa})
\end{eqnarray}

It follows from the equality (\ref{eq:folds}) that:
\begin{eqnarray}\label{eq:termo}
(\mathcal{W}_{s_{u}} + \mathcal{W}_{r}) \ \ + \ \ (\mathcal{D}_{ss_{u}} + \mathcal{D}_{sr} + \mathcal{D}_{r}) \ \ - \ \ (\mathcal{D}_{ss_{s}} + \mathcal{D}_{sa} + \mathcal{D}_{a}) \ \ + \ \ (4\mathcal{T}_{r} - 2\mathcal{T}_{a} + 2\mathcal{T}_{ssr}) \nonumber \\
\nonumber \\
= \ \ (\mathcal{W}_{s_{s}} + \mathcal{W}_{a}) \ \ + \ \ (\mathcal{D}_{ss_{s}} + \mathcal{D}_{sa} + \mathcal{D}_{a}) \ \ - \ \ (\mathcal{D}_{ss_{u}} + \mathcal{D}_{sr} + \mathcal{D}_{r}) \ \ + \ \ (4\mathcal{T}_{a} + 2\mathcal{T}_{ssa} - 2\mathcal{T}_{r})
\end{eqnarray}

By adding the two sides of equality (\ref{eq:termo}), one has:
\begin{eqnarray}\label{eq:final}
2[(\mathcal{W}_{s_{u}} + \mathcal{W}_{r}) \ \ + \ \ (\mathcal{D}_{ss_{u}} + \mathcal{D}_{sr} + \mathcal{D}_{r}) \ \ - \ \ (\mathcal{D}_{ss_{s}} + \mathcal{D}_{sa} + \mathcal{D}_{a}) \ \ + \ \ (4\mathcal{T}_{r} - 2\mathcal{T}_{a} + 2\mathcal{T}_{ssr})]\nonumber \\
\nonumber \\
= \ \  (\mathcal{W}_{s_{u}} + \mathcal{W}_{r} + \mathcal{W}_{s_{s}} + \mathcal{W}_{a}) \ \ + \ \ (2\mathcal{T}_{r} + 2\mathcal{T}_{a} + 2\mathcal{T}_{ssr} + 2\mathcal{T}_{ssa}) \nonumber \\
\nonumber \\
= \ \ W + 2T
\end{eqnarray}

And finally, by substituting (\ref{eq:final}) in (\ref{eq:formula}), the proof is concluded.

\end{proof}

We remark that a similar formula for the Euler characteristic of singular surfaces with cross-caps and triple crossing singularities was proved in \cite{izumiya1995topologically} in a different setting.

In what follows, we use the formula given in Proposition \ref{prop:X} to compute the Euler characteristic of the GS $2$-manifolds presented in the examples throughout section \ref{sec:realization}.

\begin{Ex}
Let $\mathbf{M}$ be the GS $2$-manifold of Example \ref{ex:minRCWD}. One has that, $\mathcal{X}(\mathbf{M}) = 3 - 5 + 2 + \frac{2}{2} + 0 = 1$.

In the case $\mathbf{M}$ is the GS $2$-manifold of  Example \ref{ex:RCWDlinear}, one has that $\mathcal{X}(\mathbf{M}) = 2 - 3 + 1 + \frac{2}{2} + 0 = 1$.

On the other hand, if  $\mathbf{M}$ is the GS $2$-manifold of Example \ref{ex:RCW}, then $\mathcal{X}(\mathbf{M}) = 2 - 6 + 2 + \frac{4}{2} + 0 = 0$. 

Lastly, if $\mathbf{M}$ is the GS $2$-manifold of Example \ref{fig:Complementar}, it follows that $\mathcal{X}(\mathbf{M}) = 3 - 2 + 3 + \frac{2}{2} + 0 = 5$.

%Seja $\mathbf{M}$ a $2$-variedade GS do Exemplo \ref{ex:minTar}. Então, $\mathcal{X}(\mathbf{M}) = 3 - 0 + 3 + 0 + 2 = 8$.

\end{Ex}

\section{Concluding remarks}   

{In this work we determine  sufficient conditions for the realization of abstract Lyapunov graphs labelled with singularities of type regular ($\mathcal{R}$), cone ($\mathcal{C}$), Whitney ($\mathcal{W}$), double crossing ($\mathcal{D}$) and triple crossing ($\mathcal{T}$) as Gutierrez-Sotomayor flows on closed singular two-manifolds. We show here that for a graph which strictly satisfies the set of necessary conditions of Theorem \ref{teoPH}, its realization is not always possible. At times, not even locally as we show in Theorem \ref{teo:combinatorial}.

{Locally, that is, for a semi-graph $L_{v}$ consisting of a single vertex $v$ and its incident edges, sufficient conditions for the realization of $L_{v}$ as a GS flow on a GS isolating block are presented. Moreover, in the case the weights on the incident edges are the lowest satisfying the Poincaré-Hopf condition, we provide in Theorem  \ref{teo:colecao} a complete characterization of the realizations of $L_{v}$ by describing the distinguished branched $1$-manifolds that make up the boundary components of all minimal GS isolating blocks for each singularity type. Furthermore, for higher weights on the incident edges satisfying the Poincaré-Hopf condition, we show in Theorem \ref{teo:passageway_cone}  that  GS isolating blocks with passageways which realize $L_{v}$ arise from a process of identification of pairs of orbits on minimal GS isolating blocks.}

{Concerning the global realization question, it can be posed as a problem of assigning distinguished branched $1$-manifolds to the edges of a graph, so that the induced assignment on the incident edges of each vertex is obtainable as boundary components of an isolating block for the singularity with which the vertex is labelled. Since the set of isolating blocks for a singularity is given by all the possible ways of adding passageways to a minimal isolating block, this question is equivalent to determining reachability from minimal boundaries to distinguished branched $1$-manifolds with higher weights. }

{The sufficient conditions for global realization presented in Theorem \ref{teo:families}  are obtained from two families of distinguished branched $1$-manifolds for which the increase of branch points is given in a controlled fashion. Moreover,  each of these two families assigns the same distinguished branched $1$-manifold to every edge of the graph with a common weight, thus ensuring that boundary components with common weights of two isolating blocks are always homeomorphic. Thus, all boundaries of the isolating blocks are glued according to the graph guaranteeing the global realization.  
By no means are these families unique since the increase of branch points could be attained by many more identifications than the ones considered in this work.}

{One may hope to obtain stronger global results by considering families of distinguished branched $1$-manifolds with more than one choice of assignment for each weight. Nevertheless, we remark that the number of non-homeomorphic distinguished branched $1$-manifolds increases quickly as one goes subsequently from one weight to the next. For instance, for weights $1$, $2$, $3$, $4$ and $5$ one has $1$, $1$, $2$, $4$ and at least $11$ non-homeomorphic distinguished branched $1$-manifolds, respectively. Verifying whether two distinguished branched $1$-manifolds with arbitrary weight are homeomorphic or not is a difficult question pertaining to the theory of $4$-regular pseudo-graphs. Thus, determining sufficient conditions for the global realization of Lyapunov graphs 
 is more challenging in this setting.}

{Mostly, the global results presented in this work include singularities of types $\mathcal{R}, \mathcal{C}, \mathcal{W}$ and $\mathcal{D}$, leaving out singularities of type $\mathcal{T}$  due to their intrinsic  complexity. With the exception of the realization of graphs with minimal weights on all edges proved in Theorem \ref{teo:minimal}, determining  sufficient conditions for the global realization of graphs including   $\mathcal{R}$, $\mathcal{C}, \mathcal{W}$, $\mathcal{D}$ and  $\mathcal{T}$   singularities remains an open question.} 

%{Also, one would like to consider the inclusion of periodic orbits and singular cycles in a study similar to this one.}

\nocite{aluffi2009chern}
\nocite{brasselet1981classes}
\nocite{brasselet2009vector}
\nocite{brasselet2010hirzebruch}
\nocite{esole2019euler}
\nocite{macpherson1974chern}
\nocite{schwartz1965classes}

\bibliography{references}
\bibliographystyle{ieeetr}

\end{document}